\newtheorem{thm}{Theorem}
\newtheorem{theorem}[thm]{Theorem}
\newtheorem{Corollary}[thm]{Corollary}
\newtheorem{lem}[thm]{Lemma}
\newtheorem{Lemma}[thm]{Lemma}
\newtheorem{prop}[thm]{Proposition}
\newtheorem{proposition}[thm]{Proposition}
\newtheorem{definition}[thm]{Definition}
\newtheorem{conjecture}[thm]{Conjecture}
\newcommand{\LL}{{\mathbb{L}}}
\newcommand{\ch}{{\text{ch}}}
\newcommand{\Q}{\mathbb{Q}}
\newcommand{\Fr}{\mathfrak{Fr}}
\newcommand{\CC}{\mathbb{C}}
\newcommand{\An}{\mathcal{A}_{n}}
\newcommand{\Hr}{\mathrm{H}}
\newcommand{\ZZ}{\mathbb{Z}}
\newcommand{\OO}{\mathcal{O}}
\newcommand{\AAA}{\mathcal{A}}
\newcommand\II{{\mathcal I}}
\newcommand{\DT}{\mathsf{PT}}
\newcommand{\PT}{\mathsf{PT}}
\newcommand{\GW}{\mathsf{GW}}
\newcommand{\proj}{\mathbf{P}}
\newcommand{\com}{\mathbb{C}}
\begin{document}


\title{GW/PT descendent correspondence  via vertex operators}
\author{A. Oblomkov, A. Okounkov, \and R. Pandharipande}
\newcommand{\Addresses}{{
  \bigskip
  \footnotesize

  A.~Oblomkov, \textsc{Department of Mathematics and Statistics, Univ. of Massachusetts,
    Amherst}\par\nopagebreak
  \textit{E-mail address:} \texttt{oblomkov@math.umass.edu}

  \medskip

  A.~Okounkov, \textsc{Department of Mathematics,
    Columbia University, New York}\par\nopagebreak
  \textsc{Institute for Problems of Information Transmission, Moscow}\par\nopagebreak
  \textsc{Laboratory of Representation Theory and Mathematical Physics,
HSE, Moscow}\par\nopagebreak
  \textit{E-mail address:} \texttt{okounkov@math.columbia.edu}

  \medskip

  R.~Pandharipande, \textsc{Department of Mathematics, ETH Z\"urich}\par\nopagebreak
  \textit{E-mail address:} \texttt{rahul@math.ethz.ch}

}}

\date{December 2019}

\begin{abstract} We propose an explicit formula for the
$\GW/\PT$ descendent correspondence in the stationary
case for nonsingular connected
projective 3-folds. 
The formula, written in terms of
vertex operators, is found by studying the 1-leg
geometry. We prove the proposal for all nonsingular
projective toric 3-folds. An application to
the Virasoro constraints for the stationary
descendent theory of  stable pairs will
appear in a sequel.
\end{abstract}

\maketitle
\setcounter{tocdepth}{1}
\tableofcontents
\setcounter{section}{-1}
\section{Introduction}
\subsection{Correspondences} \label{vvtt1}
Let $X$ be a nonsingular projective 3-fold.
In  \cite{MNOP1,MNOP2,MOOP},
the correspondence between  {\em primary} Gromov-Witten and Donaldson-Thomas
 invariants was established in the toric case.
As indicated  in \cite{MNOP2}, the natural next step is  
to extend the map/sheaf correspondence to the full {\em descendent} theories of $X$.
A basic compatibility for the map/sheaf correspondence is that the
$\mathrm{SL}(2)$-equivariant counts in geometries of the form
$$
X = \mathsf{Curve} \times \CC^2
$$
should specialize, via Mumford's relation for Hodge classes and the analogous vanishing
on the sheaf side, to the Gromov-Witten/Hurwitz correspondence for curves studied in \cite{OPH/GW,OPP1,OPVir}. 
The idea of using the above compatibility to study the
correspondence
represents the
technical starting point of the paper, and our formulas evolved from the formulas of  \cite{OPVir}.

Our goal here is to a present a conjecture relating descendent integrals
over the moduli of stable maps and sheaves.
The conjecture is an explicit closed
formula for the correspondence for all descendents of cohomology{\footnote{We take singular cohomology always with $\mathbb{C}$-coefficients.}}  
classes
\begin{equation}\label{sstt}
\gamma\in H^{\geq 2}(X)
\end{equation}
of degree
{\em at least 2}.
We refer to the degree restriction \eqref{sstt} as the {\em stationary}
case.{\footnote{The terminology agrees with the
definition of stationary descendents in case $X$ is a curve \cite{OPH/GW}.}}

Basic results on the map/sheaf descendent correspondence have been
obtained in \cite{PPDC,PPQ} including general constructions, 
proofs in the toric and hypersurface cases, calculations of
leading terms, and geometric applications. However, a closed formula
for the descendent correspondence was {\em not} found in \cite{PPDC}.
We have succeeded here in finding such a formula for
 descendents of classes  of degree
at least 2. For nonsingular projective toric
3-folds, we prove the stationary descendent correspondence formula (via
the methods of \cite{PPDC}).

\subsection{Stable pairs}
In the years after the first papers,
a better moduli space for the sheaf theory was introduced in \cite{pt}: the 
moduli of
stable pairs on $X$.
As the generating series of descendent integrals in the theory of stable
pairs have much better analytic properties, we will work will the
stable pairs theory on the sheaf side (instead of the moduli of
ideal sheaves used in \cite{MNOP1,MNOP2,MOOP}).

\begin{definition} \label{sp}
A {\em{stable pair}} $(F,s)$ on a 3-fold $X$ is a coherent sheaf $F$ on $X$
 and a  section $s\in H^0(X,F)$ satisfying the following stability conditions:
\begin{itemize}
\item $F$ is \emph{pure} of dimension 1,
\item the section $s:\OO_X\to F$ has cokernel of dimensional 0.
\end{itemize}
\end{definition}

Let $C$ be the scheme-theoretic support of $F$.
By the purity condition, all the irreducible components
of $C$ are of dimension 1 (no 0-dimensional components are permitted).
By  \cite[Lemma 1.6]{pt}, the kernel of $s$ is the ideal sheaf of $C$,
$$\II_C=\text{ker}(s) \subset \OO_X\, ,$$
and $C$ has no embedded points.
A stable pair $$\OO_X\to F$$ 
therefore
defines
 a Cohen-Macaulay subcurve $C\subset X$ via the kernel of $s$
 and a 0-dimensional subscheme
of $C$ via the support of the
 cokernel of $s$.

To a stable pair, we associate the Euler characteristic and
the class of the support $C$ of the sheaf $F$,
$$\chi(F)=n\in \mathbb{Z} \  \ \ \text{and} \ \ \ [C]=\beta\in H_2(X,\mathbb{Z})\,.$$
For fixed $n$ and $\beta$,
there is a projective moduli space of stable pairs $P_n(X,\beta)$. 
Unless $\beta$ is an effective curve class, the moduli space
$P_n(X,\beta)$ is empty. An analysis of the
deformation theory and the construction of the virtual
cycle $[P_n(X,\beta)]^{vir}$ is given \cite{pt}.
We refer the reader to \cite{P,rp13}
for an introduction to the theory of stable pairs.

\subsection{Stable pairs descendents}\label{spd}
Stable pairs invariants are integrals of the form
$$\big\langle \omega\big\rangle_{\beta}^\PT=\sum_nq^n\int_{[P_n(X,\beta)]^{vir}}\omega_n\, ,$$
where $\omega=\sum_{n\in \mathbb{Z}}\omega_n$ is an element of the formal sum $\bigoplus_{n\in \mathbb{Z}} H^*(P_n(X,\beta))$. For fixed $\beta$, the moduli
space $P_n(X,\beta)$ is empty for all sufficiently negative $n$. Hence,
$\big\langle \omega\big\rangle_{\beta}^\PT$ is a Laurent series in $q$.

Tautological descendent classes are defined via universal structures over the
moduli space of stable pairs.
Let
$$\pi: X\times P_n(X,\beta) \rightarrow P_n(X,\beta)$$
be the projection to the second factor, and let
\[\mathcal{O}_{X\times P_n(X,\beta)}\rightarrow \mathbb{F}_n\]
be the universal stable pair on $X\times P_n(X,\beta)$. Let{\footnote{We will
always take singular cohomology with $\mathbb{Q}$-coefficients.}}
\begin{equation*}
\ch_k(\mathbb{F}_n-\mathcal{O}_{X\times P_n(X,\beta)})\in  H^*(X\times P_n(X,\beta))\, .
\end{equation*}
The following
{\em descendent classes} are our main objects of study:
$$\ch_k(\gamma) = \pi_*\left(\ch_k(\mathbb{F}_n-\mathcal{O}_{X\times P_n(X,\beta)})\cdot \gamma\right)
\in H^*(P_n(X,\beta))\ \ \ \text{for}\ \ \  \gamma\in H^*(X)\, .$$
The summand $-\mathcal{O}_{X\times P_n(X,\beta)}$  only
affects $\ch_0$,
$$\ch_0(\gamma) = -\int_X \gamma \, \in \, H^0(P_n(X,\beta))\, .$$
Since stable pairs are supported on curves, the vanishing
$$\ch_1(\gamma)=0$$
always holds. 

We will study the 
following descenent series:
\begin{equation}\label{ss33}
 \Big\langle {\text{ch}}_{k_1}(\gamma_1)\cdots \text{ch}_{k_m}(\gamma_m)\Big\rangle_{\beta}^{X,\PT}
\, =\, \sum_{n\in \mathbb{Z}}  q^n\, 
\int_{[P_n(X,\beta)]^{vir}}\prod_{i=1}^m\ch_{k_i}(\gamma_i)\, .
\end{equation}
For fixed curve class $\beta\in H_2(X,\mathbb{Z})$, the moduli space
$P_n(X,\beta)$ is empty for all sufficiently negative $n$. Therefore, the 
descendent series \eqref{ss33} has only finitely many polar terms.

\begin{conjecture}\cite{pt}  \label{ggtthh}
The stable pairs descendent series 
$$ \Big\langle {\text{\em ch}}_{k_1}(\gamma_1)\cdots \text{\em ch}_{k_m}(\gamma_m)\Big\rangle_{\beta}^{X,\PT}$$ 
is the Laurent expansion of a  rational function of $q$ for
all $\gamma_i\in H^*(X)$ and all $k_i\geq 0$.
\end{conjecture}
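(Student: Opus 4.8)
The plan is to prove rationality in two stages: first to reduce from a general nonsingular projective 3-fold $X$ to equivariant local and toric models, and then to establish rationality for those models by torus localization together with the operator formalism that underlies the rest of the paper.

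For the reduction I would use the two structural tools available for stable pairs descendents: deformation invariance of the virtual class $[P_n(X,\beta)]^{vir}$, and the degeneration formula. Under a degeneration of $X$ to a normal-crossings union $X_1 \cup_D X_2$, the absolute series \eqref{ss33} is expressed as a finite sum of products of \emph{relative} stable pairs descendent series of the two sides, weighted by relative cohomology insertions along the divisor $D$ and glued by the relative evaluation along a basis of $H^*(D)$. Since rationality of Laurent series in $q$ is preserved under finite sums, finite products, and the gluing pairing — provided the (inverse) gluing matrix is itself rational in $q$, which I would carry along as part of the induction — the absolute statement follows once one knows rationality of the relative descendent series of the building blocks. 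The natural target blocks are the local curve geometries (total spaces of rank-$2$ bundles over a nonsingular curve, taken relative to the fibers) and the cap and tube geometries such as $\proj^1 \times \CC^2$.

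On these building blocks the torus $T = \CC^* \times \CC^*$ acts, with a further $\CC^*$ scaling, and I would apply the virtual localization formula. The $T$-fixed loci of $P_n$ are governed by the standard combinatorics: a Cohen–Macaulay curve encoded by partitions along the toric edges, together with finitely many embedded thickening parameters, so that the kernel $\II_C$ and the cokernel of $\OO_X \to F$ are determined by box configurations. Each fixed-point contribution is a ratio of equivariant Euler classes, hence an explicit rational function of $q$ and the equivariant parameters, and the insertions $\ch_{k_i}(\gamma_i)$ restrict to polynomials in these parameters on each component. The remaining difficulty is that the localization sum over all box configurations and over all $n$ is a priori only a formal Laurent series.

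The main obstacle is precisely this resummation: one must show that the full \emph{capped} generating function, summed over the infinitely many fixed loci, represents a rational function of $q$ rather than merely a formal series. This is where the vertex operator description of the present paper enters — the relevant partition functions are to be identified with matrix elements of explicit vertex operators on a Fock space, and rationality is then read off from the commutation relations and the finiteness of the operator products, in the spirit of the GW/Hurwitz and topological vertex calculations. I expect the genuinely hard point, however, to be the passage \emph{out} of the toric and local world to arbitrary $X$: a normal-crossings degeneration of a general $3$-fold need not have toric or local-curve components, so the reduction above must be supplemented either by a universality argument expressing the descendent series through a controlled finite list of tautological integrals computable on toric models, or by transporting rationality across the GW/PT descendent correspondence whose explicit form is the main construction of this paper.
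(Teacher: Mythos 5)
The first thing to observe is that the statement you are addressing is presented in the paper as a \emph{conjecture} (attributed to \cite{pt}): the paper offers no proof of it, and explicitly records that it is known only in special cases --- for Calabi--Yau 3-folds, where it reduces to the rationality of $\langle 1\rangle^{\PT}_\beta$ proven by wall-crossing in \cite{Br,Toda}, and for a list of geometries including all nonsingular projective toric 3-folds, local curves, and the quintic \cite{part1,PPstat,PP2,PPDC,PPQ}. So there is no proof in the paper to compare against, and your text cannot be accepted as a proof of the general statement.

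Your sketch is, in outline, exactly the Pandharipande--Pixton strategy used in the cases where the conjecture is known: degeneration to local curves and capped geometries, virtual localization over box configurations, and rationality of the capped vertex via operator resummation. The genuine gap is the one you name yourself in your final sentence, and it is fatal for the statement as written: an arbitrary nonsingular projective 3-fold admits no normal-crossings degeneration into toric or local-curve pieces, and no universality result currently expresses the descendent series of a general $X$ through integrals computable on such models. The fallback you suggest --- transporting rationality across the $\GW/\PT$ descendent correspondence --- does not escape this circle: the correspondence of the present paper is itself only \emph{proven} for toric $X$ (Theorem \ref{thm:noneq}), is restricted to stationary insertions $\gamma\in H^{\geq 2}(X)$, and in any case the Gromov--Witten series are a priori only formal series in $u$, so rationality in $q$ cannot be imported from that side for free. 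Two further points in your reduction deserve flags even where it does apply: the degeneration formula for \emph{descendent} insertions requires the classes $\gamma_i$ to lift to the total space of the degeneration, which is a genuine constraint (handled with care in \cite{PPQ}); and the rationality of the inverse gluing matrix is not bookkeeping to be ``carried along'' but a theorem about relative series with pure boundary conditions. As a summary of the known partial results your proposal is accurate and well organized; as a proof of Conjecture \ref{ggtthh} for all nonsingular projective 3-folds it is not one, and the paper does not claim otherwise.
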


For Calabi-Yau 3-folds, Conjecture \ref{ggtthh} reduces
immediately to the rationality of the basic series
$ \langle\, 1\, \rangle_{\beta}^\PT$
proven via wall-crossing in \cite{Br,Toda}. In the presence of
descendent insertions, Conjecture \ref{ggtthh} has been proven for
 rich class of varieties 
 \cite{part1,PPstat,PP2,PPDC, PPQ}
including all nonsingular projective toric $3$-folds.

The generating series for 
descendents in the $\mathsf{DT}$ theory of ideal sheaves
have more complicated analytic  properties. In particular,
the descendent series are {\em not} always  Laurent expansions of rational functions. 
Descendents in $\mathsf{DT}$ theory are discussed in Section \ref{sec:concluding-remarks},
and a \(\mathsf{DT}\) version of Conjecture \ref{ggtthh} is presented there.

\subsection{Gromov-Witten descendents}
Let $X$ be a nonsingular projective 3-fold.
Gromov-Witten theory is defined via integration over the moduli
space of stable maps.

Let $C$ be a possibly disconnected curve with at worst nodal singularities.
The genus of $C$ is defined by $1-\chi(\OO_C)$. 
Let $\overline{M}'_{g,m}(X,\beta)$ denote the moduli space of stable maps
with possibly {disconnected} domain
curves $C$ of genus $g$ with {\em no} collapsed connected components of genus greater or equal \(2\).
The latter condition requires 
 each non-rational and non-elliptic connected component of $C$ to represent
 a nonzero class in $H_2(X,{\mathbb Z})$.
Let 
$$\text{ev}_i: \overline{M}'_{g,m}(X,\beta) \rightarrow X\, ,$$
$$ \LL_i \rightarrow \overline{M}'_{g,m}(X,\beta)$$
denote the evaluation maps and the cotangent line bundles associated to
the marked points.
Let $\gamma_1, \ldots, \gamma_m\in H^*(X)$, and
let $$\psi_i = c_1(\LL_i) \in H^2(\overline{M}'_{g,m}(X,\beta))\, .$$
The {\em descendent insertions}, denoted by $\tau_k(\gamma)$, correspond 
to the classes $\psi_i^k \text{ev}_i^*(\gamma)$ on the moduli space
of stable maps. 
Let
$$\Big\langle \tau_{k_1}(\gamma_{1}) \cdots
\tau_{k_m}(\gamma_{m})\Big\rangle^{\mathsf{GW}}_{g,\beta} = \int_{[\overline{M}'_{g,m}(X,\beta)]^{vir}} 
\prod_{i=1}^m \psi_i^{k_i} \text{ev}_i^*(\gamma_{_i})$$
denote the descendent
Gromov-Witten invariants.
The associated generating series is defined 
by
\begin{equation}
\label{abc}
\Big\langle   \tau_{k_1}(\gamma_{1}) \cdots
\tau_{k_m}(\gamma_{m})\Big\rangle^{\mathsf{GW}}_{\beta}= 
\sum_{g\in{\mathbb Z}} \Big \langle \prod_{i=1}^m
\tau_{k_i}(\gamma_{i}) \Big \rangle^{\mathsf{GW}}_{g,\beta} \ u^{2g-2}.
\end{equation}
Since non-rational and non-elliptic components of the domain  map nontrivially, an elementary
argument shows the genus $g$ in the  sum \eqref{abc} is bounded from below. 
 Foundational aspects of the theory
are treated, for example, in \cite{BehFan,FPn, LiTian}.

\subsection{Negative descendants}
\label{negd}
To state our 
$\GW/\PT$ descendent
conjecture, we will require 
not only the usual Gromov-Witten descendant  $\tau_k$ for $k\ge 0$ but 
also descendants $\tau_k$ with negative $k<0$ indices. 
While the negative subscripts have no geometric meaning for stable
maps, negative
descendents will drastically 
simplify the statement of the correspondence.

Negative descendents reflect the fact that descendent integrals can be interpreted as matrix coefficients
of operators in a Fock space. The Fock space formalism  for  the study of ancestors in the
\(\GW\) theory of toric manifolds was developed by Givental \cite{Giv}, and
his computations can be interpreted in terms of negative descendents.
For another application of the negative descendents, see the undergraduate thesis of Pixton \cite{PixSenior}.

We introduce the negative descendants by means of
an auxiliary algebra $\mathsf{Heis}_X$ with a  linear 
functional which encodes the  Gromov-Witten invariants.

\begin{definition}
$\mathsf{Heis}_X$ is the $\CC(u)$-algebra generated by the elements 
$$\big\{\, \tau_k(\gamma)\ \big| \  k\in\mathbb{Z}\,,  \ \gamma\in H^*(X)\, \big\} $$
and satisfying the relations 
$$ [\tau_k(\alpha),\tau_m(\beta)]=(-1)^k\frac{\delta_{k+m+1}}{u^2}\int_X \alpha\cdot\beta\, .$$
\end{definition}

The standard (shifted) Heisenberg algebra $\mathsf{Heis}$ 
is generated by $\{\tau_k\}_{k\in\mathbb{Z}}$
 with relations
$$ [\tau_k,\tau_m]=(-1)^k\frac{\delta_{k+m+1}}{u^2}\, .$$
Normally ordered monomials
$$ \tau_{i_1}\tau_{i_2}\dots\tau_{i_k},\quad i_1\le i_2\le\dots\le i_k,$$
form a linear basis of $\mathsf{Heis}$.
To an element of $\mathsf{Heis}$,
we assign an element of $\text{Hom}(H^*(X),\mathsf{Heis}_X)$ by  the following
rule on basis elements (with linear extension):
\begin{itemize}
\item[$\bullet$]
  every normally ordered monomial of positive
 degree{\footnote{Here, $\tau_{i_1}\tau_{i_2}\dots\tau_{i_k}$ has degree $k$.}}
 is assigned the $\CC$-linear map
$$\tau_{i_1}\tau_{i_2}\dots\tau_{i_k}: H^*(X) \rightarrow \mathsf{Heis}_X$$ 
defined via coproduct (we use the Swidler coproduct convention \cite{Kas}):
\begin{equation*} 
\tau_{i_1}\tau_{i_2}\cdots\tau_{i_k}(\gamma)=
\tau_{i_1}(\gamma_{(1)}) \tau_{i_2}(\gamma_{(2)})
\cdots\tau_{i_k}(\gamma_{(k)})\, .
\end{equation*}
\item[$\bullet$] the degree 0 monomial $1\in \mathsf{Heis}$ 
is assigned to $0\in \text{Hom}(H^*(X),\mathsf{Heis}_X)$.
\end{itemize}
Furthermore, for $\alpha\in H^*(X)$,
 define the product
$$\alpha\cdot\tau_{i_1}\tau_{i_2}\cdots
\tau_{i_k}(\gamma)=\tau_{i_1}\tau_{i_2}\cdots\tau_{i_k}(\alpha\cdot\gamma)\, .$$

We construct a linear functional $\langle \cdot\rangle_\beta$ on 
$\mathsf{Heis}_X$ via Gromov-Witten theory.
The positive elements $\tau_{k\ge 0}(\gamma)$  generate a commutative{\footnote{If $X$
has odd cohomology, then supercommutative.
For simplicity, our analysis will restricted to commutative case.
The modifications for odd cohomology are not significant and are
left to the reader.}} subalgebra $\mathsf{Heis}^+_X \subset \mathsf{Heis}_X$. The linear functional 
\begin{equation*}\big \langle \tau_{i_1}(\gamma_1)\tau_{i_2}(\gamma_2)\cdots
\tau_{i_k}(\gamma_k) \big\rangle_{\beta}=\big\langle 
\tau_{i_1}(\gamma_1)\tau_{i_2}(\gamma_2)\cdots \tau_{i_k}(\gamma_k)
\big\rangle_{\beta}^{\mathsf{GW}}
\end{equation*}
is well-defined on the basis elements of $\mathsf{Heis}^+_X$.
We extend the linear functional  to the whole algebra 
$\mathsf{Heis}_X$ by imposing the condition
\begin{equation}\label{ActionVacuum}
\big \langle \tau_{k}(\gamma) \Phi \big\rangle_\beta=
\big\langle \Phi\big\rangle_\beta\cdot \frac{\delta_{k+2}}{u^2}\int_X\gamma
\end{equation}
for all $\Phi\in \mathsf{Heis}_X$ and $k<0$.
We will often denote $\langle \cdot\rangle_\beta$ on $\mathsf{Heis}_X$ by 
$\langle \cdot\rangle_\beta^\GW$ to emphasis the Gromov-Witten origins.

\subsection{Renormalized  descendants}\label{vvtt3}
The most convenient  way to state our conjectural $\GW/\PT$ correspondence 
is to introduce new classes
$\mathrm{H}^{\PT}_k(\gamma)$ and $\mathrm{H}^{\GW}_k(\gamma)$ for
$\gamma \in H^*(X)$. 
The required operators are introduced below.

\vspace{8pt}
 $\bullet$
The classes $\mathrm{H}^{\DT}_k(\gamma)$ 
 are linear combinations of  descendents 
for stable pairs 
defined in Section \ref{spd}. Let
$$\Hr^{\DT}_k(\gamma)= \pi_*\left( \Hr^{\DT}_k\cdot \gamma\right)\,
\in \bigoplus_{n\in \mathbb{Z}} H^*(P_n(X,\beta))\, .$$
The classes $\Hr^{\DT}_k\in \bigoplus_{n\in \mathbb{Z}}
H^*(X\times P_n(X,\beta))$
are defined by
\begin{gather*}
\mathrm{H}^{\DT}(x)=\sum_{k=-1}^\infty x^{k+1} \Hr_k^\DT
=\mathcal{S}^{-1}\left(\frac{x}{\theta}\right)
\sum_{k=0}^\infty x^k \text{ch}_k(\mathbb{F}-\mathcal{O})\, ,
\end{gather*}
where 
$$\theta^{-2}=-c_2(T_X),\quad \mathcal{S}(x)=\frac{e^{x/2}-e^{-x/2}}{x}\, .$$
In particular, we have
\[\mathrm{H}^{\DT}_k=\text{ch}_{k+1}(\mathbb{F})+\frac{c_2}{24}\text{ch}_{k-1}(\mathbb{F})+\frac{7c_2^2}{5760}\text{ch}_{k-3}(\mathbb{F})+\dots\]
\vspace{10pt}
$\bullet$ The classes $\mathrm{H}^{\GW}_k(\gamma)$ are most naturally
constructed in terms
of linear combinations of descendent operators introduced by Getzler
\cite{Getzler} . These operators
are 
\begin{equation}\label{tyty}
\sum_{n=-\infty}^\infty z^n\tau_n=\mathsf{Z}^0+\sum_{n> 0}\frac{(iuz)^{n-1}}{(1+zc_1)_n}\mathfrak{a}_n+\frac{1}{c_1}\sum_{n<0}\frac{(iuz)^{n-1}}{(1+zc_1)_n}
\mathfrak{a}_n\, ,
\end{equation}
\begin{equation*}
  \mathsf{Z}^0=\frac{z^{-2}u^{-2}}{\mathcal{S}\left(\frac{zu}{\theta}\right)}
      -z^{-2}u^{-2},  
\end{equation*}
where we use the standard Pochhammer symbol
$$(a)_n=\frac{\Gamma(a+n)}{\Gamma(a)}\, .$$
Here, $c_1$ is treated as a formal symbol, but
whenever there is an evaluation,  \(c_1\)
  becomes \(c_1(T_X)\).

A straightforward computation shows that the relations for the operators $\tau_k$ imply the standard Heisenberg
relations for the operators $\alpha_k$:
$$ [\mathfrak{a}_k(\alpha),\mathfrak{a}_m(\beta)]=k\delta_{k+m}\int_X\alpha\cdot \beta\, .$$
By definition \eqref{tyty},  $\tau_{k\geq 0}(\gamma)$ is a linear 
combination  
of $\mathfrak{a}_{i}(\gamma\cdot c_1^{k+1-i})$,
$i=k+1,\dots,1$,
\[\tau_k=\frac{(iu)^k}{(k+1)!}\mathfrak{a}_{k+1}-c_1\frac{(iu)^{k-1}}{k!}\left(\sum_{a=1}^k\frac1a\right)\mathfrak{a}_k
  +\frac{(iu)^{k-2}}{(k-1)!}c^2_1\left(\sum_{a=1}^{k-1}\frac{1}{a^2}+\sum_{1\le a<b\le k-1}\frac{1}{ab}\right)\mathfrak{a}_{k-1}+\ldots\, .\] 
For the first non-negative values of \(k\), the formula yields
\begin{equation}\label{ff556}
\tau_0=\mathfrak{a}_1+c_2/24\, ,
\quad \tau_1=\frac{iu}{2}\mathfrak{a}_2-c_1\mathfrak{a}_1\, ,
\quad \tau_2=-\frac{u^2}{6}\mathfrak{a}_3-\frac{3i u c_1}{4}\mathfrak{a}_2+c^2_1\mathfrak{a}_1+u^2c_2^2/5760\, .
\end{equation}

Similarly,  $\tau_{k<0}(\gamma)$ is
a linear combination of $\mathfrak{a}_{i}(\gamma\cdot c_1^{k-i})$,
$i=k,k-1,\dots$,
\[\tau_k=(-iu)^{k-1}(-k-1)!\mathfrak{a}_{k}-(-iu)^{k-2}(-k)!c_1\left(\sum_{a=1}^{-k}\frac{1}{a}\right)\mathfrak{a}_{k-1}+\ldots\, .\]
If we invert the transition matrix from elements $\mathfrak{a}$
to $\tau$, we obtain 
$$\mathfrak{a}_{-2}=-(iu)^3\tau_{-2}+\dots,\quad \mathfrak{a}_{-1}=(iu)^2\tau_{-1}
+(iu)^2c_1\tau_{-2}+\dots\, .$$
Thus the negative operators $\mathfrak{a}_{k<0}$ act  inside the bracket
in
 a  nonstandard manner:
$$\big\langle\mathfrak{a}_{k}(\gamma)\Phi\big\rangle_\beta=\left[\int_X \big(-c_1\delta_{k+1}+\delta_{k+2}iu\big)\cdot \gamma\right]\,  
\big\langle \Phi\big\rangle_\beta\, ,\quad k<0\, .$$
We assemble the operators $\mathfrak{a}$ in the following generating function:
\begin{equation}\label{eq:phiz}
\phi(z)=\sum_{n>0}\frac{\mathfrak{a}_n}{n} \left(\frac{izc_1}{u}\right)^{-n}+\frac{1}{c_1}\sum_{n<0}\frac{\mathfrak{a}_n}{n} \left(\frac{izc_1}{u}\right)^{-n}\, .
\end{equation}



The main objects of our paper are the  new operators
\begin{equation}\label{eq:H^GW}
\mathrm{H}^{\mathsf{GW}}( x)=\sum_{k=-1}^{\infty} \mathrm{H}^{\mathsf {GW}}_k x^{k+1}=\frac{x}{\theta}\mathrm{Res}_{w=\infty}\left(\frac{\sqrt{dydw}}{y-w}
:e^{\theta\phi(y)-\theta\phi(w)}:\right)
\end{equation}
where $y$, $w$, and $x$ satisfy the constraint 
\begin{equation}\label{eq:master_curve}
ye^y=we^w e^{-x/\theta}\, .
\end{equation}
Here, \(\mathrm{Res}_{w=\infty}\) denotes the integral along a small loop around 
$
w=\infty. 
$
The  operators \(\Hr^{\mathsf{GW}}_k\) are mutually commutative.
To obtain explicit formulas for \(\Hr^{\mathsf{GW}}_k\), we use the Lambert function to solve equation
(\ref{eq:master_curve}) and express \(y\) in terms of \(x,w\). 
Then, the integral in the definition
of \(\Hr^{\mathsf{GW}}_k\) can be interpreted as an 
extraction of the coefficient in of \(w^{-1}\).
We provide an explicit method to compute \(\Hr^{\mathsf{GW}}_k\) in Section~\ref{sec:labmert-function}.
The descendent classes
$$\Hr^{\mathsf{GW}}_k(\gamma) \in \mathsf{Heis}_X$$
are then obtained using the Swidler coproduct
as in Section \ref{negd}. We also use Swidler coproduct conventions in
$$\Hr^{\mathsf{GW}}_{\vec{k}}(\gamma)=\prod_{i}\Hr^{\mathsf{GW}}_{k_i}(\gamma), \quad \vec{k}=(k_1,\dots,k_m).$$

\subsection{Equivariant correspondence} \label{ccppvv} All the definitions and
construction introduced in Sections \ref{vvtt1}-\ref{vvtt3} 
have canonical lifts to the equivariant setting with respect to a group
action on the variety $X$. 
 Our first result concerns  the
equivariant  $\GW/\PT$ descendent correspondence \cite{PPDC}. 

The most natural setting is the capped vertex
formalism from \cite{MOOP,PPDC} which we review briefly here.
Let the 
3-dimensional torus 
$$\mathsf{T}=\CC^* \times \CC^* \times \CC^*$$ 
act on $\mathbf{P}^1\times\mathbf{P}^1\times\mathbf{P}^1$ diagonally
The tangent weights of the $\mathsf{T}$-action at the point 
$$\mathsf{p}=0\times 0\times 0 \in \mathbf{P}^1\times\mathbf{P}^1\times\mathbf{P}^1$$ are $s_1,s_2,s_3$. The $\mathsf{T}$-equivariant cohomology ring of a point
is 
$$H_{\mathsf{T}}(\bullet)=\CC[s_1,s_2,s_3]\, .$$
We have the following factorization 
of the restriction of class 
$c_1c_2-c_3$ of $X$ to 
 $\mathsf{p}$,
$$ c_1c_2-c_3=(s_1+s_2)(s_1+s_3)(s_2+s_3)\, ,$$
where $c_i=c_i(T_X)$.

Let $U\subset\mathbf{P}^1\times\mathbf{P}^1\times\mathbf{P}^1$ be the 
$\mathsf{T}$-equivariant $3$-fold obtained by removing the 
three $\mathsf{T}$-equivariant lines $L_1,L_2,L_3$ passing through the point $\infty\times\infty\times\infty$. Let $D_i\subset U$
be the divisor with $i^{th}$ coordinate $\infty$.  
For a  triple of partitions $\mu_1,\mu_2,\mu_3$, let
\begin{equation}\label{j233}
 \Big\langle\prod_i\,  \tau_{k_i}(\mathsf{p})\, \Big|\, \mu_1,\mu_2,\mu_3\, \Big\rangle^{\mathsf{GW},\mathsf{T}}_{U,D}\, ,\quad
  \Big \langle\, \prod_i \text{ch}_{k_i}(\mathsf{p})\, \Big|\, \mu_1,\mu_2,\mu_3\, 
\Big \rangle^{\mathsf{PT},\mathsf{T}}_{U,D}
\end{equation}
denote the 
generating series of the $\mathsf{T}$-equivariant
relative Gromov-Witten and stable pairs
 invariants of the pair $$D=\cup_i D_i\subset U$$ 
with relative conditions $\mu_i$ along the divisor $D_i$.  
The stable maps spaces are taken here with {\em no collapsed connected
components of genus greater than or equal to 2}.
The series \eqref{j233}  differ from
the {\em capped descendent vertices} of \cite{MOOP,PPDC} by our slight
change in the treatment of collapsed components.

\begin{theorem}\label{thm:two_leg}   After the change of variables 
$-q=e^{iu}$,
the following correspondence between
the 2-leg capped descendent vertices holds:
 $$ \Big\langle\, \prod_i \Hr^{\mathsf{GW}}_{k_i}(\mathsf{p})\, \Big|\, \mu_1,\mu_2,\emptyset\, 
\Big\rangle^{\mathsf{GW},\mathsf{T}}_{U,D}=
q^{-|\mu_1|-|\mu_2|}\Big\langle\, \prod_i \Hr^{\PT}_{k_i}(\mathsf{p})\, \Big|\, 
\mu_1,\mu_2,\emptyset\, \Big \rangle^{\PT,\mathsf{T}}_{U,D}\mod (s_1+s_3)(s_2+s_3)\, .$$
\end{theorem}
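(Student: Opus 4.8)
The plan is to reduce both capped descendent series to computations in the Fock space underlying the operators $\mathfrak{a}_n$, and then to match them via the master curve \eqref{eq:master_curve}. First I would apply $\mathsf{T}$-equivariant virtual localization to each side. The fixed loci split the capped vertex into a localized contribution at $\mathsf{p}=0\times 0\times 0$ carrying the descendent insertions, together with edge and rubber contributions transporting the relative conditions $\mu_1,\mu_2$ to the divisors $D_1,D_2$. Since $\mu_3=\emptyset$ and we work modulo $(s_1+s_3)(s_2+s_3)$, I would first record the simplifications these factors force: using $c_1c_2-c_3=(s_1+s_2)(s_1+s_3)(s_2+s_3)$ one gets $c_3\equiv c_1c_2$, while $c_2\equiv -s_3^2$ and hence $\theta^{-2}=-c_2\equiv s_3^2$ on $\mathsf{p}$. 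Thus the third coordinate direction contributes only through $s_3$ (equivalently through $\theta$), and the genuinely two-dimensional $(s_1,s_2)$ data of the two nonempty legs is what survives.

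Next I would set up the operator formalism. The relative conditions $\mu_1,\mu_2$ are read as an incoming and an outgoing state in the Fock space on which the $\mathfrak{a}_n$ act with $[\mathfrak{a}_k(\alpha),\mathfrak{a}_m(\beta)]=k\,\delta_{k+m}\int_X\alpha\cdot\beta$, and the string of descendents becomes the product of operators $\prod_i\Hr^{\mathsf{GW}}_{k_i}(\mathsf{p})$ defined by \eqref{eq:H^GW}. The entire Gromov-Witten capped vertex then becomes a single matrix element of this operator product between the two leg-states. On the stable pairs side I would produce the parallel description: express $\Hr^{\PT}_k$ through $\ch_{k+1}(\mathbb{F}-\OO)$ and the normalization $\mathcal{S}^{-1}(x/\theta)$, and rewrite the localized pair vertex as a matrix element of the corresponding operator. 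The claim is then the equality of the two matrix elements after $-q=e^{iu}$, with the prefactor $q^{-|\mu_1|-|\mu_2|}$ accounting for the shift between the Euler characteristic grading on the pairs side and the genus grading on the maps side.

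The heart of the argument is the $1$-leg identity, i.e.\ the case $\mu_2=\emptyset$, which is exactly the geometry in which the formula \eqref{eq:H^GW} was extracted. Here I would use the Gromov-Witten/Hurwitz correspondence for $\CC^2\times\text{Curve}$ of \cite{OPVir} to pin down the Gromov-Witten $1$-leg descendent vertex in the $\mathfrak{a}_n$-language, compute the stable pairs $1$-leg descendent vertex in the same language via the equivariant pair vertex, and verify that the two are intertwined by the change of variables together with the Lambert-function solution of $ye^y=we^w e^{-x/\theta}$. The master curve is precisely the device converting the $\mathcal{S}$-normalized pair descendents into the residue of the normally ordered exponential $:\!e^{\theta\phi(y)-\theta\phi(w)}\!:$, and the prefactor $x/\theta$ and the contour $\mathrm{Res}_{w=\infty}$ match the pole structure of the pair vertex. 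Granting the $1$-leg identity, the $2$-leg case should follow from the operator structure of the capped vertex: the equivariant vertex is realized as a matrix element of an operator built from the $\mathfrak{a}_n$, and the $1$-leg computation identifies this operator with the explicit $\Hr^{\mathsf{GW}}$ (resp.\ $\Hr^{\PT}$); the $2$-leg vertex is then the matrix element of the same operator between the two leg-states $\mu_1$ and $\mu_2$, so the correspondence propagates from one leg to two, consistent with the mutual commutativity of the $\Hr^{\mathsf{GW}}_k$.

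The step I expect to be the main obstacle is the precise, mode-by-mode identification of the two vertex operators after the $1$-leg reduction: one must check that extracting the $w^{-1}$-coefficient of $:\!e^{\theta\phi(y)-\theta\phi(w)}\!:$ along the master curve reproduces the stable pairs descendent operator with exactly the right powers of $iu$ and the correct $\theta$-dependence descending from $\theta^{-2}=-c_2$. The subtlety is that the two legs $\mu_1,\mu_2$ introduce a nontrivial braiding of the $y$- and $w$-vertex operators, so controlling the commutators of the normally ordered exponential — and confirming that the resulting identity closes up exactly modulo $(s_1+s_3)(s_2+s_3)$ rather than only in leading order — is the delicate point where the analytic input of the Lambert function is genuinely used.
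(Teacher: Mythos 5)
Your outline of the $1$-leg computation is essentially the paper's: the explicit formula \eqref{eq:H^GW} is extracted from the geometry $\CC^2\times\proj^1$ in the $\mathsf{T}_0$-theory ($s_1=-s_2$), using the Fock space description of the $\CC^*$-equivariant theory of $\proj^1$ from \cite{OPP1,OPVir}, the boson/fermion correspondence, and the Lambert-function solution of the master curve. (The one technical ingredient you omit there is the dressing operator $W$ of Section \ref{dresso}, which intertwines $\AAA$ with $\widetilde{\AAA}$ and is what actually converts the $\AAA$-formalism of \cite{OPP1} into the $\mathfrak{a}_n$-formalism in which \eqref{eq:H^GW} is written.)

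The genuine gap is your passage from one leg to two. You assert that the $2$-leg capped vertex is ``the matrix element of the same operator between the two leg-states $\mu_1$ and $\mu_2$,'' so that the correspondence ``propagates.'' It does not: the two nonempty legs lie along different coordinate directions with different tangent weights, the relative conditions along $D_1$ and $D_2$ are not an incoming and an outgoing state for a single vertex operator in one Fock space, and the $1$-leg identity is only established modulo $s_1+s_2$ (the $\mathsf{T}_0$-specialization), which is a different ideal from the $(s_1+s_3)(s_2+s_3)$ appearing in the statement. No formal operator manipulation closes this gap. The paper's actual route is: (a) import from \cite{PPDC} the \emph{existence} of an invertible full $3$-leg correspondence $\underline{\mathcal{T}}$ with coefficients polynomial in $c_1,c_2,c_3$ (Theorem \ref{pppddd}); (b) prove a \emph{uniqueness} statement (Lemma \ref{lem:uniq}) --- the $1$-leg $\PT$ matrix $\mathcal{M}^{\PT}_d$ is injective because its entries are evaluations of shifted Newton polynomials, and the evaluation map on shifted symmetric functions is the Fourier transform for $S_\infty$ --- so the explicit $1$-leg transformation $\mathcal{T}$ must equal $\underline{\mathcal{T}}|_{c_3=c_1c_2}$ (Corollary \ref{coco9}); and (c) prove a pole lemma (Lemma \ref{lem:poles}), via the capped localization formula and a rim-hook/constant-term analysis of the $\PT$ vertex, showing that both sides of the $2$-leg correspondence are regular along $s_3=-s_1$ and $s_3=-s_2$ when $\mu_3=\emptyset$, so that the specializations $s_3=-s_i$ of the $\underline{\mathcal{T}}$-correspondence are legitimate and yield the claimed congruence modulo $(s_1+s_3)(s_2+s_3)$. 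Your proposal contains none of (a), (b), (c), and each is essential: without the uniqueness step the explicit vertex-operator formula cannot be identified with the abstract correspondence of \cite{PPDC}, and without the pole lemma the specialization that produces the $2$-leg statement is not even well defined.
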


The result of Theorem \ref{thm:two_leg} has two defects. Since the
third partition is empty, the result only covers the {\em 2-leg} case.
Moreover, the equality of the correspondence is not proven exactly,
but only mod $(s_1+s_3)(s_2+s_3)$. For the 1-leg vertex with partitions
$(\mu_1, \emptyset,\emptyset)$, 
Theorem \ref{thm:two_leg}
can be restricted in two ways to obtain the equality of the
correspondence
$$\text{mod} \ \ (s_1+s_3)(s_1+s_2)(s_2+s_3)\, .$$
The analysis of the 1-leg geometry in Section \ref{einbein}
 shows the 
relationship of the operators 
$\Hr^{\mathsf{GW}}$ and
$\Hr^{\PT}$ to the formulas of \cite{OPH/GW,OPP1,OPVir}.


\subsection{Non-equivariant limit} 
\label{sec:non-equiv-limit}
By following the proofs of \cite{PPDC}, we derive a 
non-equivariant $\GW/\PT$ descendent correspondence for stationary insertions.
For our statements, we will follow as closely as possible
the notation of \cite{PPDC}.

Let \(\mathsf{Heis}^{c}\) be the Heisenberg algebra with
generators \(\mathfrak{a}_{k\in\ZZ\setminus \{0\}}\),
coefficients \(\CC[c_1,c_2]\), and  relations 
\[[\mathfrak{a}_k,\mathfrak{a}_m]=k\delta_{k+m}c_1c_2.\]
Let \(\mathsf{Heis}^c_+\subset \mathsf{Heis}^c\) be the
subalgebra generated by the elements \(\mathfrak{a}_{k>0}\), and  define the \(\CC[c_1,c_2]\)-linear map
\begin{equation}\label{cclin}
\mathsf{Heis}^c\rightarrow \mathsf{Heis}^c_+\, , \ \ \
\Phi\mapsto \widehat{\Phi} 
\end{equation}
by $\widehat{\mathfrak{a}}_k= \mathfrak{a}_k$ for $k>0$ and
$$\widehat{\mathfrak{a}_k \Phi}=(-c_1\delta_{k+1}+\delta_{k+2}iu)\widehat{\Phi}\,,\ \ \
\text{for}\ k<0\, .$$
When restricted to the subalgebra \(\mathsf{Heis}^{c}_+\),
the \(\CC[c_1,c_2]\)-linear map \eqref{cclin}  is an isomorphism.

For a nonsingular projective 3-fold $X$ and classes $\gamma_1, \ldots,\gamma_l\in H^*(X)$,
the hat operation make no difference inside the $\GW$ bracket,
\begin{equation}\label{fredfredfred}
  \langle \Hr_{\vec{k}}^{\GW}(\gamma)\rangle^{\GW}_\beta=\langle     \widehat{\Hr_{\vec{k}}^{\GW}(\gamma)}\rangle^{\GW}_\beta,
\end{equation}
because the treatment of the negative descendents on
the left side is compatible with the treatment of
the negative descendents by the hat operation.

Let \(\vec{k}=(k_1,\ldots,k_l)\) be a vector of non-negative integers.
Following \cite{PPDC}, we define the following element of \(\mathsf{Heis}^c_+\):
\[\widetilde{\Hr}_{\vec{k}}=\frac{1}{(c_1c_2)^{l-1}}
\sum_{\text{set partitions $P$ of \{1,\dots,l\}}}(-1)^{|P|-1}(|P|-1)!\prod_{S\in P}\widehat{\Hr}^{\mathsf{GW}}_{\vec{k}_S}\, ,\]
where \(\Hr^{\mathsf{GW}}_{\vec{k}_S}=\prod_{i\in S} \Hr^{\mathsf{GW}}_{k_i}\) and
  the element \(\Hr_k^{\GW}\in \mathsf{Heis}^c\) is a linear combination of monomials of \(\mathfrak{a}_i\), the expression is given by (\ref{eq:H^GW}).
The polynomiality of \(\widetilde{\Hr}_{\vec{k}}\)  in \(c_1,c_2\) is not obvious
(and will be deduced in Section \ref{sec:uniq-cor} 
 from the results of \cite{PPDC}).

For classes \(\gamma_1,\ldots,\gamma_l\in H^*(X)\)  and a
vector \(\vec{k}=(k_1,\ldots,k_l)\) of non-negative
integers, we define
\[\overline{\Hr_{k_1}(\gamma_1)\dots\Hr_{k_l}(\gamma_l)}=\sum_{\text{set partitions $P$ of \{1,\dots,l\}}}\,
\prod_{S\in P}\widetilde{\Hr}_{\vec{k}_S}(\gamma_S)\, ,\]
where \(\gamma_S=\prod_{i\in S}\gamma_i\).

\begin{theorem}\label{thm:noneq}
 Let  \(X\) be a nonsingular projective toric 3-fold, and let 
\(\gamma_i\in H^{\geq 2}(X,\CC)\). After the change of variables
$-q=e^{iu}$, we have
 $$\Big \langle\overline{ \Hr_{k_1}(\gamma_1)\dots \Hr_{k_l}(\gamma_l)} \Big \rangle_{\beta}^{\mathsf{GW}}=
q^{-d/2}\Big \langle \Hr^{\PT}_{k_1}(\gamma_1)\dots \Hr^{\PT}_{k_l}(\gamma_l) \Big \rangle_{\beta}^{\mathsf{PT}}
\,,$$
  where \(d=\int_\beta c_1\).
\end{theorem}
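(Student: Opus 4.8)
The plan is to follow the $\mathsf{T}$-equivariant localization and capped-vertex strategy of \cite{MOOP,PPDC}, taking Theorem \ref{thm:two_leg} as the local input and invoking the structural results of \cite{PPDC} to pass to the non-equivariant statement. First I would apply virtual localization to both brackets $\langle\,\overline{\cdots}\,\rangle^{\GW}_\beta$ and $\langle\cdots\rangle^{\PT}_\beta$ on the toric 3-fold $X$, reducing each descendent series to a sum over the toric graph $\Ga_X$ of \emph{capped descendent vertex} contributions at the torus-fixed points, glued along edges by the invariant lines. The descendent-free edge and gluing data are governed by the primary correspondence of \cite{MNOP1,MNOP2,MOOP}, so the new content is concentrated at the vertices and along the $1$- and $2$-leg line geometries. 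Because $\gamma_i\in H^{\geq 2}(X)$ is stationary, its equivariant lift is supported on the toric strata, so each insertion $\Hr_{k_i}(\gamma_i)$ distributes, through the Swidler coproduct and the restriction of $\gamma_i$ to the fixed loci, into descendent operators at the incident vertices. The bar operation encodes exactly this bookkeeping: $\overline{\Hr_{k_1}(\gamma_1)\cdots\Hr_{k_l}(\gamma_l)}$ re-expands the connected pieces $\widetilde{\Hr}_{\vec{k}_S}(\gamma_S)$ over set partitions, matching the factorization of the PT descendent bracket produced by localization, while the per-leg factors $q^{-|\mu|}$ assemble across $\Ga_X$ into the global shift $q^{-d/2}$, $d=\int_\beta c_1$, after $-q=e^{iu}$.

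This reduces everything to the capped descendent vertices. Theorem \ref{thm:two_leg} identifies the GW and PT vertices in the $2$-leg case modulo $(s_1+s_3)(s_2+s_3)$; its two $1$-leg specializations, combined with the toric $S_3$-symmetry permuting the three factors, upgrade the $1$-leg vertex to an equality modulo $(s_1+s_2)(s_1+s_3)(s_2+s_3)=(c_1c_2-c_3)|_{\mathsf p}$ (using that $\CC[s_1,s_2,s_3]$ is a UFD). The decisive structural input, recalled in Section \ref{sec:uniq-cor} from \cite{PPDC}, is that the stationary descendent correspondence is governed by a \emph{universal} transformation whose coefficients are polynomials in $c_1,c_2$ alone; this is what underlies the polynomiality of $\widetilde{\Hr}_{\vec{k}}$ and makes the non-equivariant specialization of $s_1,s_2,s_3$ to the Chern roots of $T_X$ well-defined on $\mathsf{Heis}^c_+$.

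Given this universality, I would argue that the explicit operators $\Hr^{\GW}_k$ and $\Hr^{\PT}_k$, which by Theorem \ref{thm:two_leg} and the $1$-leg analysis of Section \ref{einbein} reproduce the universal correspondence on the locus cut out modulo the above factors, must coincide with it identically. The point is that the correspondence operators are independent of $c_3$: once they agree with a $c_3$-independent universal transformation after imposing $c_1c_2-c_3=0$, they agree as operators in $c_1,c_2$, and hence compute the correct capped vertex of every valence, in particular the genuine $3$-leg vertices of a compact toric 3-fold that Theorem \ref{thm:two_leg} does not reach directly. Summing the matched vertex and edge contributions over $\Ga_X$ and passing to the non-equivariant limit then yields the stated identity; the rationality of both series (Conjecture \ref{ggtthh}, known for toric $X$ by \cite{PPDC,PPQ}) guarantees that the limit exists term by term in $q$.

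The hard part will be precisely this determination step: upgrading the congruence of Theorem \ref{thm:two_leg}, valid only on a hypersurface in the space of tangent weights, to an exact identity of the universal correspondence, and thereby covering the $3$-leg vertex that is not computed directly. I expect the key to be a degree bound on the universal transformation in $c_1,c_2$ from \cite{PPDC} together with its $c_3$-independence, so that agreement modulo $c_1c_2-c_3$ forces agreement everywhere. Verifying that the explicit vertex-operator formulas genuinely satisfy this determination, rather than merely that some universal correspondence exists, is the technical crux.
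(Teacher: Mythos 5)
Your proposal contains a genuine gap at exactly the point you flag as the ``technical crux,'' and the fix you anticipate is not the one that works. You assert that the universal correspondence of \cite{PPDC} ``is governed by a universal transformation whose coefficients are polynomials in $c_1,c_2$ alone'' and then argue that, being $c_3$-independent, it must coincide identically with the explicit vertex-operator transformation once they agree modulo $c_1c_2-c_3$. But Theorem \ref{pppddd}(ii) says the coefficients of $\underline{\mathcal{T}}$ are polynomials in $c_1,c_2,c_3$; it is only the explicit transformation $\mathcal{T}$ built from $\Hr^{\GW}_k$ that lives in $\CC[c_1,c_2]$ (Lemma \ref{xxdd22}), and Corollary \ref{coco9} gives only $\mathcal{T}=\underline{\mathcal{T}}|_{c_3=c_1c_2}$. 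Consequently your conclusion that the explicit operators ``compute the correct capped vertex of every valence, in particular the genuine $3$-leg vertices'' does not follow and is not claimed anywhere in the paper --- this is precisely why Theorem \ref{thm:two_leg} is stated only for two legs and only modulo $(s_1+s_3)(s_2+s_3)$.

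The step you are missing is a cohomological degree argument, and it is where the stationary hypothesis $\gamma_i\in H^{\geq 2}(X)$ enters. The paper does not re-run localization on $X$: it quotes the reformulated \cite[Theorem 7]{PPDC}, which gives the non-equivariant correspondence with the $c_3$-dependent insertions $\overline{\underline{\Hr}_{k_1}(\gamma_1)\cdots\underline{\Hr}_{k_l}(\gamma_l)}$, and then observes that replacing $\underline{\mathcal{T}}$ by $\underline{\mathcal{T}}|_{c_3=c_1c_2}$ changes the insertion only by terms divisible by $c_3-c_1c_2$, a class of real degree $6$; multiplied by any $\gamma_i$ of real degree at least $2$ this exceeds $\dim_{\mathbb{R}}X=6$ and vanishes. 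So the two transformations give the same non-equivariant brackets without ever being equal as equivariant operators. Your localization-and-glue narrative (distribution of stationary classes to fixed loci, assembly of $q^{-|\mu|}$ into $q^{-d/2}$, reliance on the primary correspondence for edges) is broadly consistent with the machinery underlying \cite{PPDC}, but as written your argument would require an exact equivariant identity that is false, whereas the paper needs only the congruence modulo $c_3-c_1c_2$ plus the degree vanishing. You would also need to address the polynomiality of $\widetilde{\Hr}_{\vec k}$, which the paper deduces from this same comparison rather than assuming.
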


The two main restrictions in Theorem \ref{thm:noneq} are that $X$ is toric and that
the classes $\gamma_i$ are of degree at least 2. We conjecture
the first restriction to be unnecessary.

\begin{conjecture}\label{cccjjj}
 Let  \(X\) be a nonsingular projective 3-fold, and let 
\(\gamma_i\in H^{\geq 2}(X,\CC)\). After the change of variables
$-q=e^{iu}$, we have
 $$\Big \langle\overline{ \Hr_{k_1}(\gamma_1)\dots \Hr_{k_l}(\gamma_l)} \Big \rangle_{\beta}^{\mathsf{GW}}=
q^{-d/2}\Big \langle \Hr^{\PT}_{k_1}(\gamma_1)\dots \Hr^{\PT}_{k_l}(\gamma_l) \Big \rangle_{\beta}^{\mathsf{PT}}
\,,$$
  where \(d=\int_\beta c_1\).
\end{conjecture}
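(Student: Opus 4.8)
The plan is to reduce Conjecture~\ref{cccjjj} to the already-established toric case of Theorem~\ref{thm:noneq} by a degeneration/deformation argument combined with the toric local model. First I would invoke the general machinery of \cite{PPDC}: the $\GW/\PT$ descendent correspondence is governed by a universal transformation that is determined entirely by the capped vertices appearing in Theorem~\ref{thm:two_leg}. The essential point is that both the $\GW$ bracket $\langle\overline{\Hr_{k_1}(\gamma_1)\cdots\Hr_{k_l}(\gamma_l)}\rangle_\beta^{\GW}$ and the $\PT$ bracket $\langle\Hr^{\PT}_{k_1}(\gamma_1)\cdots\Hr^{\PT}_{k_l}(\gamma_l)\rangle_\beta^{\PT}$ can be assembled from the equivariant two-leg capped vertex data via the degeneration formula and the gluing (rubber) formalism. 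Since the correspondence operators $\Hr^{\GW}$ and $\Hr^{\PT}$ were engineered precisely so that the equivariant capped vertices match (mod the stated ideal), the matching of the full invariants for an \emph{arbitrary} nonsingular projective 3-fold should follow once one controls the passage from the toric-local pieces to the global geometry.

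The key steps, in order, would be: (1) Establish the requisite \emph{deformation invariance} of both sides. The stationary hypothesis $\gamma_i\in H^{\geq 2}(X)$ is what makes this viable---as in \cite{PPstat,PPDC}, stationary descendent series depend only on the deformation class of $X$ together with the classes $\gamma_i$ and $\beta$, so one may deform $X$ to a convenient representative. (2) Use a degeneration of $X$ into toric (or locally toric) pieces, applying the degeneration formula for both $\GW$ and $\PT$ descendent series; the behavior of the renormalized descendents $\overline{\Hr_{k_1}(\gamma_1)\cdots}$ and the products $\Hr^{\PT}_{k_i}(\gamma_i)$ under splitting of the diagonal must be tracked, which is exactly the role of the set-partition (inclusion--exclusion) formulas defining $\widetilde{\Hr}_{\vec k}$ and $\overline{\Hr_{k_1}(\gamma_1)\cdots}$. (3) On each toric piece, invoke Theorem~\ref{thm:noneq} to match the local contributions, and reassemble via the gluing relations, checking that the $q^{-d/2}$ prefactor with $d=\int_\beta c_1$ is additive over the degeneration (since $c_1$ of the total space restricts compatibly to the pieces).

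The hard part will be step (2): controlling the descendents of the \emph{higher-degree} classes $\gamma_i\in H^{\geq 2}(X)$ under degeneration, because a general class $\gamma_i$ need not restrict to a class supported on a single toric piece, and the Swidler-coproduct bookkeeping of how $\gamma_i$ splits across the normal-crossing locus of the degeneration interacts nontrivially with the nonlinear set-partition structure of $\overline{\Hr_{k_1}(\gamma_1)\cdots\Hr_{k_l}(\gamma_l)}$. One must show that the combinatorics of the $\overline{(\cdots)}$ operation is precisely compatible with the degeneration formula's sum over distributions of relative conditions and over ways of assigning the diagonal splittings of each $\gamma_i$---i.e.\ that the renormalization was \emph{designed} to make the correspondence multiplicative under gluing. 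A clean way to organize this is to first prove the statement equivariantly and locally (which Theorem~\ref{thm:two_leg} and the 1-leg analysis of Section~\ref{einbein} largely supply, modulo the ideal $(s_1+s_3)(s_2+s_3)$), then argue that the non-equivariant stationary limit kills the ambiguity: the stationary condition forces the relevant series to be polynomial in the equivariant parameters of bounded degree, so agreement modulo a codimension-two ideal upgrades to genuine equality after taking the non-equivariant limit. Establishing this last polynomiality-and-vanishing step rigorously, for a general (non-toric) $X$, is where the genuinely new input beyond \cite{PPDC} and Theorem~\ref{thm:noneq} would be required, and is the reason the statement is only a conjecture rather than a theorem.
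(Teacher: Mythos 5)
This statement is Conjecture~\ref{cccjjj}, and the paper does \emph{not} prove it: the authors establish only the toric case (Theorem~\ref{thm:noneq}) and explicitly conjecture that the toric hypothesis can be dropped. So there is no proof in the paper to compare against, and any ``proof'' of the general statement would be new mathematics. To your credit, your final paragraph acknowledges this; but the specific strategy you sketch has obstructions more fundamental than the ``polynomiality-and-vanishing'' step you single out as the missing ingredient.

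First, step (2) cannot even be set up for an arbitrary nonsingular projective $3$-fold: a general $X$ admits no degeneration into toric (or locally toric) pieces, and deformation invariance only moves you within a deformation class, which for most $3$-folds contains nothing toric. The degeneration strategy is known to work for special classes such as complete intersections (this is how \cite{PPQ} treats the quintic), but it does not reach general $X$. Second, even where a degeneration to nice pieces exists, the degeneration formula for \emph{descendent} invariants is not the primary-field formula: one must first trade the descendents $\mathrm{ch}_k(\gamma)$ and $\tau_k(\gamma)$ for relative conditions along the singular divisor of the degeneration, i.e.\ one needs a relative $\GW/\PT$ descendent correspondence for the pairs $(X_i,D)$ and control of the bubble geometry over $D$. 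The paper flags exactly this as an open future direction (Section on ``Past and future directions''), so your step (2) presupposes a result at least as hard as the conjecture itself. Third, your proposed upgrade from ``agreement modulo $(s_1+s_3)(s_2+s_3)$'' to genuine equality is not a formal consequence of stationarity; in the paper this upgrade is achieved for toric $X$ only by combining the uniqueness argument of Section~\ref{sec:uniq-cor} with the full equivariant correspondence of \cite{PPDC}, neither of which is available for non-toric targets. In short: your outline is a reasonable description of \emph{why one believes} the conjecture and of a plausible route for special $X$, but it is not a proof, and the paper offers none.
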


For the precise formula for our $\GW/\PT$ correspondence, the second
restriction (to the stationary theory) is required --- the formula
is not correct for descendents of the identity class.

\subsection{Plan of the paper} 
After reviewing the {\em dressing operator} in Section \ref{dresso},
the goal of Section \ref{einbein} is 
to establish the 1-leg version of 
Theorem \ref{thm:two_leg} with
$$\mu_1=\mu_2=\emptyset$$
 modulo \(s_1+s_2\).
We derive our formula for the  1-leg $\GW/\PT$ descendent correspondence 
  by an explicit analysis of the Gromov-Witten  and 
stable pairs descendent theory (the modulo $s_1+s_2$ condition leads
to drastic simplification). The results depend crucially on
the earlier study of curves in \cite{OPH/GW,OPP1}.
We then show our correspondence matches the
correspondence of \cite{PPDC} modulo \(c_3-c_1c_2\) and use the results of \cite{PPDC} to  conclude the proof of Theorem \ref{thm:two_leg} in Section \ref{sec:uniq-cor}.

To prove  the stationary non-equivariant result of 
Theorem \ref{thm:noneq}, we must check that the non-equivariant limit 
formulation of the $\GW/\PT$ descendent correspondence of \cite{PPDC} does not
develop singularities under the specialization \(c_3=c_2c_1\).
 The matter is discussed in the Section \ref{sec:uniq-cor}.
Examples are presented in
Section \ref{sec:examples}.

We have conjectured Virasoro constraints for the 
stable pairs descendent
theory for all nonsingular projective 3-folds (the precise
formulas for $\proj^3$ appear in \cite{P}). In a sequel \cite{OOP} to
the present paper, we will apply Theorem \ref{thm:noneq}
to obtain the Virasoro constraints for stable pairs on toric 3-folds
in the stationary case from the proven Virasoro constraints
in Gromov-Witten theory.

Section \ref{sec:concluding-remarks}
contains results and conjectures concerning parallel questions
about the descendent $\mathsf{DT}$ theory of ideal sheaves.
The ${\mathsf{DT}}$ descendent series are not always
rational functions in $q$, so a discussion of the analytic properties
is necessary.

\subsection{Past and future directions} 
The main formula and the method of the paper is quite old \cite{OkTalk}. 
Since our first draft was written, many new approaches to understanding  descendent integrals on both sides
of the correspondence were developed. In particular, we now expect a geometric
path to the \(\mathsf{GW}/\mathsf{PT}\) descendent correspondence for $X$
should be possible via {\em relative} geometries $X/D$.
For relative theories {\em without} higher descendent insertions, the correspondence is very simple \cite{MNOP1}.
After moving the descendents of the classes $$\gamma\in H^{\ge 2}(X)$$ to the
relative divisor $D$, the relative 
$\GW/\PT$ descendent correspondence there implies  a descendent correspondence for $X$.

For such a path to succeed, a detailed study of the bubble over $D$ is required.
On the sheaf side, there has been very good progress in explicitly relating the relative and descendent invariants in fully equivariant K-theory, see \cite{AO,Sm}.

\subsection{Acknowledgments} 
We are very grateful to
D.~Maulik, M.~Moreira, N.~Nekrasov,\linebreak G. Oberdieck,
A.~Pixton, J.~Shen, R.~Thomas, and Q.~Yin for  
many conversations about descendents and
descendent correspondences.

A.~Ob. was partially supported by NSF CAREER grant
DMS-1352398. This paper is based upon work supported by the National Science Foundation under Grant No. 1440140, while the first two authors were in residence at the Mathematical Sciences Research Institute in Berkeley, California during the Spring semester of 2018.
A.~Ok. was partially supported by the Simons Foundation as a Simons Investigator.
A.~Ok. gratefully acknowledges funding by the Russian
Academic Excellence Project ’5-100’ and RSF grant 16-11-10160.
R.~P. was partially supported by 
 SNF-200021143\-274, SNF-200020162928, ERC-2012-AdG-320368-MCSK, SwissMAP, and
the Einstein Stiftung.

This project has received funding from the European Research
Council (ERC) under the European Union Horizon 2020 research and
innovation program (grant agreement No. 786580).

\section{Dressing operator}\label{dresso}
\subsection{Summary}
We establish here properties of
the dressing operator $W$ which intertwines 
the operators $\AAA$ and $\widetilde{\AAA}$ of \cite{OPP1}.
 These results are needed for the
proofs of Theorems \ref{thm:two_leg} and \ref{thm:noneq}
 of the introduction.

\subsection{Notation}\label{nottt}
We recall the formulas for the operators $\AAA_k$ of \cite[Section 3.2.2]{OPP1}:
\begin{eqnarray*}
\mathcal{A}\, =\, \sum_{k\in \mathbb{Z}} \mathcal{A}_k z^{k}& =&\frac{1}{u}\mathcal{S}(uz)^{tz}
\sum_{k\in \mathbb{Z}}\frac{(e^{uz/2}-e^{-uz/2})^k}{(tz+1)_k}
\mathcal{E}_k(uz)\, ,
\\  \mathcal{E}_r(z)&=&\sum_{k\in\ZZ+\frac{1}{2}} e^{z(k-r/2)} E_{k-r,k}+\frac{
\delta_{r,0}}{(e^{z/2}-e^{-z/2})}\, ,
\\  (a+1)_k&=&\begin{cases}
             (a+1)(a+2)\dots (a+k),\quad \quad k\ge 0\\
	     (a(a-1)\dots(a+k+1))^{-1},\quad k\le 0
            \end{cases}\, .
\end{eqnarray*} 
Here, $E_{ij}$ are the matrix units of the Lie 
algebra{\footnote{Every operator in Section \ref{dresso} 
is assumed to be an element of $\mathfrak{gl}(V)$ but not 
$\mathfrak{gl}(\Lambda^{\infty/2} V)$.}}
$\mathfrak{gl}(V)$ where $V$ is the infinite dimensional $\mathbb{C}$-vector space with basis
labeled by the shifted integers $\mathbb{Z}+ \frac{1}{2}$. For a more
detailed treatment,
we refer the reader to \cite[Section 2]{OPP1}.

We will study the operator $W$ which intertwines the
 operator $\mathcal{A}$ with 
$$\widetilde{\mathcal{A}}\, =\, 
\sum_{k\in \mathbb{Z}} \widetilde{\mathcal{A}}_k z^{k}\, =\, 
\frac{1}{u}\sum_{k\in \mathbb{Z}}\frac{(uz)^k}{(tz+1)_k}\alpha_k\, ,$$
the  $u$-asymptotic expansion of $\mathcal{A}$ at 
$u\sim 0$.
See \cite[Section 4.4.2]{OPVir} for further\footnote{In \cite{OPVir}, the notation \(\mathrm{A}_k=\mathcal{A}_{k+1}\) is used.} discussion.
We have used here the operators
 $$\alpha_k=\begin{cases}
             \mathcal{E}_k(0),\quad k\ne 0\\
	      1, \quad \quad \, \, \, \, k=0
            \end{cases}\, .$$

By definition, the matrix $\mathcal{A}$ can be
 written as a series in variables $u,z,t$ with coefficients
in the subalgebra of $\mathfrak{gl}(V)$ generated by the operators
$$ H=\sum_{k\in \mathbb{Z}+\frac{1}{2}} kE_{kk} \quad \text{and} \quad  S=\alpha_{-1}\, .$$
Let us denote the latter subalgebra by $\widetilde{\mathfrak{gl}}(V)$.
The algebra 
$\widetilde{\mathfrak{gl}}(V)$
has a natural basis of ordered monomials
$$\big\{\, H^a S^b\, \big| \, a,b\in \ZZ\, \big\}$$ 
with relations 
$$ SH=(H+1)S\, .$$

The coefficient in front of each monomial $H^a S^b$ in the
formula for $\mathcal{A}$ 
is a Laurent polynomials of variables $z,u,t$. In other words,
$$ \mathcal{A}\in \widetilde{\mathfrak{gl}}(V)[[z^{\pm 1},u^{\pm 1},t^{\pm 1}]]\, .$$
Moreover, $\AAA$ is homogeneous of degree $-1$ if we introduce the 
 grading 
\begin{equation} \label{grading}\deg u=\deg t=-\deg z=1\, .\end{equation}

\subsection{The differential equation}
We consider first the intertwiner between 
the operators
\begin{eqnarray*}
 D&=&S^{-1}+H\, , \\
 \widetilde{D}&=&D-\frac12 \left(H\frac{1}{1-Z}+\frac{1}{(1-Z)}H\right)
\ \ \  \text{where} \  \ \ Z=\frac{tS}{u}\,,
\end{eqnarray*}
  and 
establish the following basic
properties.

\begin{lem}\label{LemDiffEqW} There is a unique solution $W$ of the linear differential equation
\begin{equation}\frac{d W}{du}=\frac{1}{t}WB \ \ \ \
\text{where} \ \ \ \  B=H^2\frac{Z^2}{(1-Z)^2}+H\frac{Z^2}{(1-Z)^3}+
\frac{2Z^3+3Z^2}{8(1-Z)^4}\, ,\label{DiffEqW}
\end{equation}
with the following properties:
\begin{enumerate}
\item[(i)] $W|_{u=0}=1$,
\item[(ii)] $W^{-1}DW=\widetilde{D}$,
\item[(iii)] $W$ is upper-triangular.
\end{enumerate}
\end{lem}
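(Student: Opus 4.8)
The plan is to split the lemma into three parts: existence and uniqueness together with property (i) and the triangularity (iii), both of which follow from a single power-series expansion, and then the intertwining (ii), which is the real content.

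The key observation for existence, uniqueness, and (iii) is that although $Z=tS/u$ blows up as $u\to 0$, the coefficient $B$ stays regular there: since $\tfrac{Z^2}{(1-Z)^2}\to 1$ while the other two summands tend to $0$, one has $B\to H^2$. Expanding the rational functions of $Z$ at $Z=\infty$, i.e.\ in powers of $1/Z=\tfrac{u}{t}S^{-1}$, presents $B$ as a power series $B=\sum_{m\ge 0}u^m\beta_m$ with $\beta_0=H^2$ and each $\beta_m\in\widetilde{\mathfrak{gl}}(V)$. The linear initial value problem $\tfrac{dW}{du}=\tfrac1t WB$, $W|_{u=0}=1$, then has a unique formal power-series solution $W=\sum_{n\ge 0}u^nW_n$, determined by the recursion $nW_n=\tfrac1t\sum_{m=0}^{n-1}W_{n-1-m}\beta_m$; this gives existence, uniqueness, and (i). For (iii) I would read the triangularity off the same expansion: $\beta_0=H^2$ is diagonal and each $\beta_m$ with $m\ge 1$ is $S^{-m}$ times a polynomial in $H$, hence carries a strictly positive power of $S^{-1}$, which raises the $H$-grading. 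Since products and sums of such operators again involve only non-negative powers of $S^{-1}$, the recursion forces each $W_n$, and therefore $W$, to be a series in non-negative powers of $S^{-1}$, i.e.\ upper-triangular.

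The essential step is (ii). Set $\Delta=W^{-1}DW$. Because $D$ is independent of $u$ and $\tfrac{dW}{du}=\tfrac1t WB$, a direct differentiation (using $\tfrac{d}{du}W^{-1}=-\tfrac1t BW^{-1}$) yields the Lax-type equation $\tfrac{d\Delta}{du}=\tfrac1t[\Delta,B]$ with $\Delta|_{u=0}=D$. The plan is then to show that $\widetilde D$ solves the very same initial value problem. The initial condition is immediate, since the correction terms of $\widetilde D$ involve $g=\tfrac1{1-Z}\to 0$ as $u\to 0$, so $\widetilde D|_{u=0}=D$. The crux is the compatibility identity
\[
\frac{d\widetilde D}{du}=\frac1t[\widetilde D,B].
\]
To verify it I would use $\tfrac{dZ}{du}=-\tfrac1t Z^2S^{-1}$, so that $\dot g=-\tfrac1t S^{-1}\tfrac{Z^2}{(1-Z)^2}$ and $\tfrac{d\widetilde D}{du}=-\tfrac12(H\dot g+\dot g H)$, and then expand $[\widetilde D,B]=[S^{-1},B]+[H,B]-\tfrac12[Hg+gH,B]$ via the relations $[H,f(Z)]=-Zf'(Z)$, $[S^{-1},f(Z)]=0$, and $[S^{-1},H]=-S^{-1}$, all consequences of $SH=(H+1)S$. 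Once the identity is established, uniqueness for the linear problem $\dot X=\tfrac1t[X,B]$, $X|_{u=0}=D$, forces $\Delta=\widetilde D$, which is precisely (ii).

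I expect the compatibility identity to be the main obstacle. The precise shapes of the three rational functions of $Z$ in $B$, and in particular the numerical form of the last summand $\tfrac{2Z^3+3Z^2}{8(1-Z)^4}$, are engineered so that, after repeatedly commuting $H$ past powers of $Z$ through $HZ^n=Z^n(H-n)$, the parts of $\tfrac1t[\widetilde D,B]$ that are quadratic, linear, and constant in $H$ reproduce $\tfrac{d\widetilde D}{du}$ term by term. Managing this non-commutative bookkeeping — keeping track of the ordering of $H$ against functions of $Z$ and against $S^{-1}$ — is where the genuine effort lies; everything else is formal first-order ODE theory.
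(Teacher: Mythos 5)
Your proposal is correct and follows essentially the same route as the paper: existence, uniqueness, and (i) from the regularity of $B$ at $u=0$, (iii) from the triangularity of $B$, and (ii) by reducing to the single compatibility identity $\frac{d\widetilde{D}}{du}=\frac{1}{t}[\widetilde{D},B]$ together with uniqueness of solutions of a linear ODE (the paper phrases this as showing $\frac{d}{du}\bigl(W\widetilde{D}W^{-1}D^{-1}\bigr)=0$, which is the same argument). Like the paper, you defer the ``lengthy but straightforward'' verification of that identity, but the commutation rules you list ($[H,f(Z)]=-Zf'(Z)$, $[S^{-1},f(Z)]=0$, $\dot{Z}=-\tfrac{1}{t}Z^2S^{-1}$) are exactly the right tools to carry it out.
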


\noindent In fact, the unique solution $W$ of Lemma \ref{LemDiffEqW}
also intertwines $\AAA$ and $\widetilde{\AAA}$:

\begin{theorem}\label{ThmDressingEq} Let $W$ be the operator 
of Lemma \ref{LemDiffEqW}, then
\begin{equation}
\label{DressingEq}
W^{-1}\mathcal{A} W=\widetilde{\mathcal{A}}\, .
\end{equation}
\end{theorem}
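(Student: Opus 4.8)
The plan is to use $u$ as an evolution variable: I will show that $W^{-1}\mathcal{A}W$ and $\widetilde{\mathcal{A}}$ satisfy one and the same first-order linear ordinary differential equation in $u$, and then conclude by uniqueness. The first step is to put $\mathcal{A}$ into closed operator form. From the matrix-unit definition of $\mathcal{E}_k$ one checks $\mathcal{E}_k(uz)=e^{-uzk/2}S^{-k}e^{uzH}$ for $k\neq 0$ (with an extra scalar for $k=0$ coming from the $\delta_{r,0}$ term), and since $(e^{uz/2}-e^{-uz/2})^k e^{-uzk/2}=(1-e^{-uz})^k$ the defining series telescopes to $\mathcal{A}=P\,G(S)\,e^{uzH}+(\text{central})$, where $P=\tfrac1u\mathcal{S}(uz)^{tz}$ and $G(S)=\sum_k\tfrac{(1-e^{-uz})^k}{(tz+1)_k}S^{-k}$. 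By contrast $\widetilde{\mathcal{A}}=\tfrac1u\sum_k\tfrac{(uz)^k}{(tz+1)_k}S^{-k}$ is a function of $S$ alone. Both are homogeneous of degree $-1$ for the grading \eqref{grading}, and because the unique $W$ of Lemma \ref{LemDiffEqW} is homogeneous of degree $0$, conjugation by $W$ preserves degree.

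Next I would differentiate $\mathcal{A}$ in $u$. Using the relation $SH=(H+1)S$ (equivalently the rule for moving $e^{uzH}$ past powers of $S$) and the Pochhammer recursion $(tz+k)\tfrac{\rho^k}{(tz+1)_k}=\rho\,\tfrac{\rho^{k-1}}{(tz+1)_{k-1}}$ with $\rho=1-e^{-uz}$, a direct computation produces both a companion commutation identity $[D,\mathcal{A}]=tz\,\mathcal{A}$ and a first-order equation
\begin{equation*}
\frac{d\mathcal{A}}{du}=z\,\mathcal{A}\,D+c(u,z,t)\,\mathcal{A}
\end{equation*}
(each modulo the central term), for an explicit scalar $c$. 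The appearance of $D=S^{-1}+H$ on the right is the crucial structural point: the $u$-derivative of the $e^{uzH}$ factor contributes a term with $H$ on the right, while the Pochhammer recursion supplies a term with $S^{-1}$ on the right, and the two assemble into $z\,\mathcal{A}\,D$.

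Now I would conjugate by $W$. Writing $\mathcal{B}=W^{-1}\mathcal{A}W$ and using $\tfrac{dW}{du}=\tfrac1t WB$ together with property (ii) of Lemma \ref{LemDiffEqW}, namely $W^{-1}DW=\widetilde{D}$, I get
\begin{equation*}
\frac{d\mathcal{B}}{du}=\frac1t[\mathcal{B},B]+z\,\mathcal{B}\,\widetilde{D}+c\,\mathcal{B}.
\end{equation*}
It then remains to verify that $\widetilde{\mathcal{A}}$ satisfies this identical equation, i.e.
\begin{equation*}
\frac{d\widetilde{\mathcal{A}}}{du}=\frac1t[\widetilde{\mathcal{A}},B]+z\,\widetilde{\mathcal{A}}\,\widetilde{D}+c\,\widetilde{\mathcal{A}}.
\end{equation*}
Since $\widetilde{\mathcal{A}}$ is a function of $S$ and $Z=tS/u$ is as well, the left side and the commutator $[\widetilde{\mathcal{A}},B]$ are computable in closed form; the same computation already gives $[\widetilde{D},\widetilde{\mathcal{A}}]=tz\,\widetilde{\mathcal{A}}$, the $\widetilde{D}$-analogue of the eigen-identity above. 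This matching is the heart of the argument and is exactly where the specific shape of $B$ --- the coefficients $H^2\tfrac{Z^2}{(1-Z)^2}$, $H\tfrac{Z^2}{(1-Z)^3}$, $\tfrac{2Z^3+3Z^2}{8(1-Z)^4}$ --- is forced: $\tfrac1t[\widetilde{\mathcal{A}},B]$ must account precisely for the discrepancy $z\,\widetilde{\mathcal{A}}\,(D-\widetilde{D})$ between the two flows, with $D-\widetilde{D}=\tfrac12\bigl(H\tfrac1{1-Z}+\tfrac1{1-Z}H\bigr)$.

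With the common ODE established, the difference $\mathcal{B}-\widetilde{\mathcal{A}}$ solves a homogeneous first-order linear equation in $u$; since $W|_{u=0}=1$ forces $\mathcal{B}$ and $\widetilde{\mathcal{A}}$ to have matching leading behaviour as $u\to 0$, uniqueness of solutions (made precise by expanding each homogeneous degree-$(-1)$ coefficient as a Laurent series in $u$) yields $\mathcal{B}=\widetilde{\mathcal{A}}$, and the central scalar parts are reconciled by a separate low-order check. The main obstacle is the verification in the previous paragraph: confronting the explicit $B$ of Lemma \ref{LemDiffEqW} against $D-\widetilde{D}$ and getting the operator parts, the scalar $c$, and the central terms all to line up is the delicate computation on which the theorem rests.
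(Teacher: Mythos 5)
Your architecture is in fact a ``global in $(z,t)$'' variant of the paper's own argument: the paper also proves the theorem by exhibiting a first-order linear ODE in $u$, built from $B$ and $\widetilde{D}$ via Lemma~\ref{LemDiffEqW}, that both $W^{-1}\mathcal{A}W$ and $\widetilde{\mathcal{A}}$ satisfy, and then invoking uniqueness. Your preliminary identities do check out: $\mathcal{E}_k(uz)=e^{-uzk/2}S^{-k}e^{uzH}$ telescopes $\mathcal{A}$ (modulo its central part) into $P\sum_k g_k S^{-k}e^{uzH}$ with $g_k=\rho^k/(tz+1)_k$, $\rho=1-e^{-uz}$; the Pochhammer recursion $(tz+k)g_k=\rho g_{k-1}$ yields $[D,\mathcal{A}]=tz\,\mathcal{A}$; and the $u$-derivative closes up into $z\,\mathcal{A}D+c\,\mathcal{A}$. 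The essential difference is that the paper does \emph{not} run the ODE at generic $(z,t)$: it first uses the homogeneity \eqref{grading} and Zariski density to reduce to $t=1$, $z=m\in\ZZ_{>0}$, where the closed forms $\mathcal{A}^{(m)}=e^{1/S}e^{uH^2/2}S^me^{-uH^2/2}e^{-1/S}$ and $\widetilde{\mathcal{A}}^{(m)}=S^me^{mu/S}$ hold, and only then argues by ODE uniqueness for $m=1$.

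That specialization is not cosmetic, and its absence is where your proof has a genuine gap. For generic $(z,t)$ the negative modes do not truncate: for $k<0$ the coefficient $1/(tz+1)_k=tz(tz-1)\cdots(tz+k+1)$ is a nonzero polynomial and the $k$-th term carries $u^{k-1}$, so $\mathcal{A}$ and $\widetilde{\mathcal{A}}$ are formal series with unboundedly negative powers of $u$. A first-order linear ODE has no uniqueness statement on such doubly infinite Laurent series: there is no value at $u=0$ to serve as initial data, and the coefficient recursion induced by the ODE has no base case. Your parenthetical appeal to homogeneity does not repair this: by \eqref{grading}, once the powers of $z$ and $t$ are fixed the power of $u$ is forced, so each homogeneous coefficient is a single monomial in $u$ and there is nothing to evolve, while re-summing over $t$-powers restores the unbounded tail. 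The specialization $tz=m$ is exactly the device that kills the tail ($1/(m+1)_k=0$ for $k\le -m-1$), so that after clearing the residual pole by $u^{m+1}$ both sides become honest power series in $u$ equal to $S^m$ at $u=0$, where uniqueness applies. To complete your route you must either import this step (recovering the paper's proof) or supply a different uniqueness principle. Two smaller caveats: the crux verification that $\widetilde{\mathcal{A}}$ satisfies the same ODE is only asserted (comparable to the paper's own ``direct (lengthy) computation''), and the central scalar of $\mathcal{A}$, namely $\mathcal{S}(uz)^{tz}/\bigl(u(e^{uz/2}-e^{-uz/2})\bigr)$, is an exact transcendental function that must be subtracted before $\widetilde{\mathcal{A}}$ can satisfy the clean ODE --- more than a ``low-order check''.
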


Existence of the {\em dressing operator}  $W$ satisfying (\ref{DressingEq}) is shown in \cite{OPP1} by  slightly different methods,
but the path
via \eqref{DiffEqW}
is new (and very efficient).

\begin{proof}[Proof of Lemma~\ref{LemDiffEqW}]
Let $W$ be a solution of the differential equation (\ref{DiffEqW}). The equation has no singularity at $u=0$, so there is a unique solution $W$
satisfying   $W|_{u=0}=1$. A direct computation yields
$$ \frac{d}{du}\left(W\widetilde{D}W^{-1}D^{-1}\right)=W\left(\frac{1}{t}
\left[B,\widetilde{D}\right]+\frac{d\widetilde{D}}{du}\right) W^{-1}D^{-1}\, .$$
Then, after a lengthy but straightforward calculation, we find 
$$\frac{1}{t}\left[B,\widetilde{D}\right]+\frac{d\widetilde{D}}{du}=0.$$
Thus we obtain the first and second properties of $W$. 
The upper-triangularity follows from the upper-triangularity of $B$.
\end{proof}

\begin{proof}[Proof of Theorem~\ref{ThmDressingEq}]

 As explained in Section \ref{nottt}, both $\AAA$ and $W$ are  sums of monomials $H^aS^b$ with coefficients
 in the ring of Laurent polynomials of variables $u,t,z$ and,
moreover, are homogeneous with respect to grading (\ref{grading}). 
Therefore, using Zariski density,
we need only prove  \eqref{DressingEq}
 at the values
$$z=m\, ,\ \ \ t=1\, , \ \ \  m\in \ZZ_{>0}\, .$$ 
We define the operators
 $$\AAA^{(m)}=\frac{u^{m+1} m^m}{m!}\AAA|_{t=1,z=m}\,, \ \quad \widetilde{\AAA}^{(m)}=\frac{u^{m+1} m^m}{m!}\widetilde{\AAA}|_{t=1,z=m}\, .$$
 By \cite[Lemma 2]{OPP1} in first case and a 
direct computation in second case, we find
 $$\AAA^{(m)}=e^{1/S}e^{uH^2/2}S^me^{-uH^2/2} e^{-1/S}\, ,\ 
\quad \widetilde{\AAA}^{(m)}=S^m e^{mu/S}\, .$$
Thus, to prove complete the proof of Theorem 
\ref{ThmDressingEq},
we need only prove the equation 
\begin{equation}\label{frrr}
W^{-1}\AAA^{(1)} W=\widetilde{\AAA}^{(1)}\, .
\end{equation}

Let us denote the operator on the LHS of equation \eqref{frrr}
by $\mathrm{O}$ and the operator on the RHS by $\widetilde{\mathrm{O}}$. 
Equation \eqref{frrr} is 
satisfied at $u=0$ since
$$\AAA^{(1)}|_{u=0}=S\, ,\ \quad \widetilde{\AAA}^{(1)}|_{u=0}=S\,,\ 
\quad W|_{u=0}=1\, .$$
Taking the $u$ derivative of $\mathrm{O}$, we find
\begin{eqnarray*}
\frac{d \mathrm{O}}{du}&=&[\mathrm{O},B]+\frac{1}{2}W^{-1}e^{1/S}e^{uH^2/2}[H^2,S]e^{-uH^2/2}e^{-1/S}W\\
&=&[\mathrm{O},B]-\frac12 \mathrm{O}-W^{-1}e^{1/S}e^{uH^2/2}HSe^{-uH^2/2}e^{-1/S}W\\
&=&[\mathrm{O},B]-\frac12 \mathrm{O}
-\widetilde{D}\mathrm{O}\, ,
\end{eqnarray*}
where  we have used the intertwining relations for $D$ and $\widetilde{D}$.
A direct (lengthy) computation yields 
$$\frac{d\widetilde{\mathrm{O}}}{du}=[\widetilde{\mathrm{O}},B]-\frac12 \widetilde{\mathrm{O}}
-\widetilde{D}\widetilde{\mathrm{O}}.$$
By the uniqueness of a solution of a linear ODEs, the proof
of \eqref{frrr} is complete.
\end{proof}


\section{1-leg correspondence}\label{einbein}
\subsection{Background} 
The $1$-leg geometry concerns the space
$$X=\mathbb{C}^2 \times \proj^1$$
with the action of the 3-dimensional torus
$$\mathsf{T}= (\com^*)^2 \times \com^*\, .$$
The first factor $(\com^*)^2$ acts on $\com^2$ with tangent weight $s_1$ and
$s_2$ at the origin $0\in \mathbb{C}^2$. The second factor $\com^*$ acts on 
$\proj^1$ with tangent weights $t$ and $-t$
at the respective fixed points $0,\infty \in \proj^1$.
For simplicity, we denote the two fixed points
$$0\times 0\, ,\ 0\times \infty \in \com^2 \times \proj^1$$
by $0$ and $\infty$ respectively.

There is a 2-dimensional torus ${\mathsf{T}}_0\subset \mathsf{T}$ 
which preserves the natural symplectic form $dz_1 \wedge dz_2$
on $\CC^2$. Let
$$H_{{\mathsf{T}}_0}(\bullet)=\CC[s,t]\, ,$$ 
then the restriction to the action  ${\mathsf{T}_0}$ corresponds to the specialization 
$$s_1=-s_2=s\, , \ \ \  t=t\, .$$ 
By the Mumford identity for 
the Hodge classes, the ${\mathsf{T}_0}$-equivariant Gromov-Witten
invariants of $\com^2 \times \proj^1$ are equal to 
the $\com^*$-equivariant Gromov-Witten
 invariants of $\proj^1$ up to simple factors of $s$. 
 The results of \cite{OPP1} solving
 the equivariant Gromov-Witten theory of $\proj^1$
 in terms of operators $\AAA$
are restated in Section \ref{ffbbtt} in the form we require here.

In \cite{OPP1}, 
the Gromov-Witten invariants of $\proj^1$ are computed 
in terms of the Fock space
$$ \mathcal{F}=\Lambda^{\infty/2}V\, ,\quad V= z^{1/2}\CC[[z^{\pm 1}]]\,.$$
Before stating the results of \cite{OPP1},
we give a quick overview of the basics about the Fock space
and the related representation theory.

\subsection{Fock space}
The Fock space $\Lambda^{\infty/2} V$ has a natural basis of the form
$$\Lambda^{\infty/2} V=\bigoplus_S \mathbb{C} v_S,\quad v_S=z^{s_1}\wedge z^{s_2}\wedge z^{s_3}\, \ldots,$$
where $S=\{s_1>s_2>s_3>\dots\}\subset \ZZ+1/2$ is an
ordered sequence satisfying the properties
\begin{enumerate}
\item[(i)] $S_+=S\setminus 
\left\{\mathbb{Z}_{\le 0}-\frac{1}{2}\right\}$ is finite,
\item[(ii)] $S_-=\left\{\mathbb{Z}_{\le 0}-\frac{1}{2}\right\}\setminus S$ is finite.
\end{enumerate}
The fermionic operator $\psi_k$ on $\Lambda^{\infty/2}V$ is 
defined by wedge product with the vector $z^k$,
$$\psi_k\cdot v=z^k\wedge v.$$

An inner product $(\cdot,\cdot)_z$ on $V$ is
defined by letting the monomials $z^k$ be an 
orthonormal basis. We use the same notation $(\cdot,\cdot)_z$
for the induced inner product on $\Lambda^{\infty/2}V$. Let
$A^*$ denote the operator adjoint to $A$ with respect to the inner 
product $(\cdot,\cdot)_z$.  The
adjoint operators $\psi^*_k$ satisfy 
the canonical anti-commutation relations,
\begin{gather*}
\psi_i\psi^*_j+\psi_j^*\psi_i=\delta_{ij}\, ,\quad
\psi_i\psi_j+\psi_j\psi_i=\psi_i^*\psi_j^*+\psi_j^*\psi_i^*=0\, . 
\end{gather*}

The projective representation $\pi_\mathcal{F}$ of $\mathfrak{gl}(V)$ is defined in terms of fermion operators by the formula
$$ \pi_{\mathcal{F}}(E_{ij})=:\psi_i\psi_j^*:\, ,$$
where we have used normal order notation
$$ :\psi_i\psi^*_j :=\begin{cases} \psi_i\psi_j^*\, ,
 \quad j>0\\ -\psi_j^*\psi_i\, ,\quad j<0\, .\end{cases}$$
 
The operators $\alpha_k$ of Section \ref{nottt}  commute as element of 
$\mathsf{End}(V)$, but the operators $\pi_{\mathcal{F}}(\alpha_k)$ do 
{\em not} commute --- the operators $\pi_{\mathcal{F}}(\alpha_k)$
form an Heisenberg algebra. For simpler formulas, 
we will drop $\pi_{\mathcal{F}}$ in our notation.
That is, we use
$\alpha_k$ for the operators $\pi_{\mathcal{F}}(\alpha_k)$:
$$  [\alpha_k,\alpha_l]=k\delta_{k+l}\, .$$
The action of the Heisenberg algebra preserves the eigenspaces of the charge operator
$$ C v_S=(|S_+|-|S_-|) v_S\, .$$

The operators $\AAA_k$, $\widetilde{\AAA}_k$
of Section \ref{nottt} act on $\Lambda^{\infty/2}V$ via $\pi_{\mathcal{F}}$.
We obtain an alternative proof
of Theorem 1 of \cite{OPP1}.
\begin{Corollary}  \label{pqq2} As elements of \(\mathrm{End}(\Lambda^{\infty/2}V)\) the operators
  \(\widetilde{\AAA}_k\) satisfy 
$$\left[\widetilde{\AAA}_k,\widetilde{\AAA}_l\right]=(-1)^k \delta_{k+l-1}\frac{t}{u^2}\, .$$
\end{Corollary}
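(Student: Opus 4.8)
My plan is to compute $[\widetilde{\AAA}_k,\widetilde{\AAA}_l]$ directly from the presentation of $\widetilde{\AAA}$ as a combination of the operators $\alpha_m$. First I would note that the family $\{\widetilde{\AAA}_k\}$ is commutative in $\mathrm{End}(V)$, since each $\widetilde{\AAA}_k$ is a linear combination of the $\alpha_m$ and these commute in $\mathrm{End}(V)$. Consequently, after passing to the projective representation $\pi_{\mathcal{F}}$, the bracket $[\widetilde{\AAA}_k,\widetilde{\AAA}_l]$ is exactly the value of the fermionic cocycle and is therefore a scalar; the whole content of the corollary is to evaluate that scalar. Expanding $\widetilde{\AAA}=\frac1u\sum_m \frac{(uz)^m}{(tz+1)_m}\alpha_m$ as a series in $z$ and applying $[\alpha_m,\alpha_n]=m\delta_{m+n}$ turns the bracket of the two generating series in variables $z_1,z_2$ into the single bilateral sum
\begin{equation*}
[\widetilde{\AAA}(z_1),\widetilde{\AAA}(z_2)]=\frac{1}{u^2}\sum_{m\in\ZZ}m\,\frac{(z_1/z_2)^m}{(tz_1+1)_m\,(tz_2+1)_{-m}}\,.
\end{equation*}

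To evaluate this sum I would rewrite the Pochhammer symbols through Gamma functions and invoke the classical bilateral binomial identity $\sum_{m\in\ZZ}\frac{\zeta^m}{\Gamma(a+m)\Gamma(b-m)}=\frac{(1+\zeta)^{a+b-2}}{\Gamma(a+b-1)}$, together with its $\zeta\partial_\zeta$ derivative, at $a=tz_1+1$ and $b=tz_2+1$. This produces the closed form $\frac{\Gamma(a)\Gamma(b)}{\Gamma(a+b-2)}\,\zeta\,(1+\zeta)^{t(z_1+z_2)-1}$ with $\zeta=z_1/z_2$, whose only singularity is the branch point at $1+\zeta=0$, that is, at $z_1+z_2=0$. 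This is precisely the support of the formal delta $\sum_{k}(-1)^k z_1^k z_2^{1-k}$ that the right-hand side $(-1)^k\delta_{k+l-1}\,t/u^2$ predicts.

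The delicate point — and the step I expect to be the main obstacle — is the passage from this convergent closed form to the formal-distribution statement of the corollary. For generic $t$ the bilateral series sums to an honest analytic function, whereas the quantities $[\widetilde{\AAA}_k,\widetilde{\AAA}_l]$ are the coefficients of its expansion as a formal delta. The resolution is the standard mechanism: the $m>0$ and $m<0$ halves must be expanded in the complementary regions $|z_1|<|z_2|$ and $|z_1|>|z_2|$, and it is the jump of $(1+\zeta)^{t(z_1+z_2)-1}$ across its branch cut at $\zeta=-1$ that simultaneously produces the delta (hence the factor $\delta_{k+l-1}$) and the alternating sign $(-1)^k$. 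I would then fix the overall normalization on the base case $[\widetilde{\AAA}_0,\widetilde{\AAA}_1]$, where $\widetilde{\AAA}_1=\alpha_1$ and only the $\alpha_{-1}$ contribution to $\widetilde{\AAA}_0$ survives the bracket, returning $t/u^2$ up to sign and matching $(-1)^0 t/u^2$.

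Finally, since conjugation leaves central scalars unchanged and Theorem~\ref{ThmDressingEq} gives $W^{-1}\AAA W=\widetilde{\AAA}$, the computed bracket transports verbatim from $\widetilde{\AAA}$ to $\AAA$, which is the promised alternative derivation of Theorem~1 of \cite{OPP1}.
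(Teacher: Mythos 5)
Your setup is sound and matches the paper's: reducing to the bilateral sum
\begin{equation*}
\big[\widetilde{\AAA}(z),\widetilde{\AAA}(w)\big]=\frac{1}{u^2}\sum_{m\ne 0}m\Big(\frac{z}{w}\Big)^{m}\frac{1}{(1+tz)_m(1+tw)_{-m}}
\end{equation*}
via the Heisenberg relations, observing that the bracket is central, and noting at the end that conjugation by $W$ transports the relation to $\AAA$ are all correct. The gap is in the central step, the evaluation of this sum. The bilateral binomial identity you invoke computes the \emph{convergent} value of the full two-sided series on the circle $|z/w|=1$ (for fixed numerical $z,w$ with $\mathrm{Re}\,t(z+w)$ large enough). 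That is a different regularization from the one that defines the operators $\widetilde{\AAA}_k$: the coefficient of $z^kw^l$ is extracted after expanding the Pochhammer symbols as power series in $z$ and $w$, i.e.\ the relevant object is the sum of the two one-sided formal expansions in the complementary regions, not the boundary value of the two-sided sum. Your proposed mechanism for bridging the two --- reading the formal delta off the ``jump of $(1+\zeta)^{t(z_1+z_2)-1}$ across its branch cut'' --- does not work: a formal delta $\sum_n(-\zeta)^n$ arises only as the difference of the two expansions of a function with a \emph{simple pole} at $\zeta=-1$, whereas the jump of $(1+\zeta)^{s}$ across a branch point for non-integer $s$ is a continuous density supported on the whole cut, not a delta. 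Moreover the closed form $\frac{\Gamma(a)\Gamma(b)}{\Gamma(a+b-2)}\zeta(1+\zeta)^{t(z_1+z_2)-1}$ lives only on the common circle of convergence, so there is no ``jump'' of it to take; to produce one you would need the one-sided sums individually, which your argument never evaluates.

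That one-sided evaluation is precisely where the content lies, and it is what the paper supplies: a telescoping identity shows that $(1+\tfrac{z}{w})\sum_{m>0}(\cdots)=tz$ \emph{exactly} as formal series (and $(1+\tfrac{w}{z})\sum_{m<0}(\cdots)=-tw$), so each half is the geometric expansion of $\pm\,tzw/(z+w)$ in its own region, and the difference of the two expansions across the simple pole at $z+w=0$ is the formal delta with the sign $(-1)^k$. If you replace your step (d) by a closed-form summation of each half separately (the telescoping above, or equivalently a one-sided Gauss/Chu--Vandermonde evaluation), the rest of your argument goes through; as written, the branch-cut mechanism does not yield the coefficients, and fixing the normalization on $[\widetilde{\AAA}_0,\widetilde{\AAA}_1]$ cannot substitute for proving the identity for all $k,l$.
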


\begin{proof} Using the homogeneity of $\widetilde{\AAA}$, 
we set \(u=1\) for the
 proof. The statement of Corollary \ref{pqq2} is equivalent
to the equation
$$\left[\widetilde{\AAA}(z),\widetilde{\AAA}(w)\right]=tz\sum_{n\in\ZZ}\left(-\frac{z}{w}\right)^n$$
which we will derive from the Heisenberg relations for the operators
$\alpha_k$. By definition (after setting $u=1$),
$$\left[\widetilde{\AAA}(z),\widetilde{\AAA}(w)\right]=
\sum_{n\ne 0}n\left(\frac{z}{w}\right)^{n}\frac{1}{(1+tz)_n(1+tw)_{-n}}\, .$$
On other hand, the summation over  positive $n$ after
multiplication by 
$(1+\frac{z}{w})$ is equal to
$zt$ because
\begin{multline*}
\left(\frac{z}{w}\right)^n\left(\frac{n}{(1+tz)_n(1+tw)_{-n}}+\frac{z}{w}\frac{n}{(1+tz)_n(1+tw)_{-n}}\right)=\\
\left(\frac{z}{w}\right)^n\left(\frac{1}{(1+tz)_{n-1}(1+tw)_{-n}}-\frac{tz}{(1+tz)_n(1+tw)_{-n}}\right)-\\
\left(\frac{z}{w}\right)^n\left(\frac{z}{w}\frac{1}{(1+tz)_{n}(1+tw)_{-n-1}}-\frac{tz}{(1+tz)_n(1+tw)_{-n}}\right)=\\
\left(\frac{z}{w}\right)^n\frac{1}{(1+tz)_{n-1}(1+tw)_{-n}}-\left(\frac{z}{w}\right)^{n+1}\frac{1}{(1+tz)_n(1+tw)_{-n-1}}\, .
\end{multline*}
Analogously, the summation over
  negative $n$ after multiplication by $1+\frac{w}{z}$ is equal $-zt$.
\end{proof}

The Fock space contains a special {\em vacuum vector}  
$$v_\emptyset=v_{-1/2\, ,\,-3/2\, ,\, -5/2\, ,\, \ldots}\, $$
annihilated by the positive part of the Heisenberg algebra spanned by 
$\alpha_{k>0}$. 
The vectors 
$$ |\mu\rangle =\frac{1}{\mathfrak{z}(\nu)} \prod \alpha_{-\nu_i} v_\emptyset$$
form a basis of the Fock subspace of vectors of charge $0$.
For an operator $A\in \mathsf{End}(\mathcal{F})$,
 the shorthand notation 
$$\langle A|\mu\rangle^{\mathcal{F}} =\left(v_\emptyset,A|\mu\rangle\right)_z$$
is commonly used.

The Gromov-Witten bracket of \cite{OPP1}, 
$$\big\langle \tau_{k_1}([0])\tau_{k_2}([0])\dots\tau_{k_n}([0]) 
\,\big|\, \mu\big\rangle^{\mathsf{GW},\CC^*,\bullet}_{\proj^1}\, ,$$
denotes the $\CC^*$-equivariant theory of $\proj^1$ relative to 
$\infty\in \proj^1$.
The superscript $\bullet$
indicates integrals over the moduli spaces of
stable relative maps
with possibly disconnected domains. In particular,
we allow collapsed connected components of all genera
(but stability of the map must be respected).
A central result of \cite{OPP1} is the following matching.

\begin{theorem}\cite{OPP1} The
$\CC^*$-equivariant Gromov-Witten
theory of $\proj^1$ satisfies
$$\big \langle \tau_{k_1}([0])\tau_{k_2}([0])\dots\tau_{k_n}([0]) 
\, \big|\, \mu\big\rangle^{\mathsf{GW},\CC^*,\bullet}_{\proj^1}=
\big \langle \AAA_{k_1+1}\AAA_{k_2+1}\dots\AAA_{k_n+1} e^{\alpha_1}\, \big|\, \mu
\big\rangle^{\mathcal{F}}\, .$$ \label{jjjjj}
\end{theorem}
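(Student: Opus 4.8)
The plan is to reduce the $\CC^*$-equivariant stationary descendent theory of $\proj^1$ to Hurwitz theory and then to evaluate the resulting series in the fermionic Fock space $\Lambda^{\infty/2}V$. The first step is to invoke the $\mathsf{GW}$/Hurwitz correspondence of \cite{OPH/GW}: in the stationary sector, a descendent $\tau_k([0])$ of the point class is equivalent, after passage to completed cycles, to the insertion of a completed $(k+1)$-cycle. Working $\CC^*$-equivariantly, the target $\proj^1$ contributes edge weights, and I would localize each descendent at the fixed point $0\in\proj^1$. The localization contribution — the integral of $\psi^k$ against the virtual class, paired with the equivariant Euler class of the obstruction — assembles into a generating series in $z$ whose closed form I claim is exactly $\AAA_{k+1}$. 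Here the prefactor $\mathcal{S}(uz)$ arises from the cotangent-line (Hodge) contributions at the marked point, while the Pochhammer factor $(tz+1)_k$ records the equivariant normal weights $t,-t$ along $\proj^1$.

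The second step is the operator calculus. Completed-cycle insertions are realized in $\Lambda^{\infty/2}V$ by the energy operators $\mathcal{E}_r(z)$, and dressing them by the equivariant prefactors above yields precisely $\pi_{\mathcal{F}}(\AAA_{k+1})$. The relative boundary condition $\mu$ at $\infty$ is encoded by the state $|\mu\rangle$, while the exponential $e^{\alpha_1}$ accounts for the non-relative end of $\proj^1$ by summing over the trivial (unramified) profiles there; pairing on the left with the charge-zero vacuum $v_\emptyset$ closes the disconnected matrix element. The $n$-point descendent function then becomes the ordered product $\AAA_{k_1+1}\cdots\AAA_{k_n+1}$ inserted between $v_\emptyset$ and $e^{\alpha_1}|\mu\rangle$, exactly as stated.

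To organize the computation, I would first establish the statement in the degenerate regime governed by $\widetilde{\AAA}$, where the modes reduce to the Heisenberg algebra whose commutators are recorded in Corollary \ref{pqq2} and the combinatorics is transparent, and then transport the identity to the full operators $\AAA$ by means of the dressing operator $W$ of Theorem \ref{ThmDressingEq}, whose intertwining $W^{-1}\AAA W=\widetilde{\AAA}$ and upper-triangularity (Lemma \ref{LemDiffEqW}) control the genus expansion. The hard part will be the exact matching in the first step: one must show that the equivariant descendent contribution — a Hodge-type integral over the relative stable-maps space with all of its $\psi$- and Euler-class factors — assembles into the closed-form operator $\AAA_{k+1}$ on the nose, including the precise $\mathcal{S}$-factor and Pochhammer normalization, and not merely up to lower-order corrections. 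This exact assembly is where the full strength of the completed-cycle formalism of \cite{OPH/GW,OPP1} is required.
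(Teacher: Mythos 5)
First, note that the paper offers no proof of Theorem \ref{jjjjj}: it is imported verbatim from \cite{OPP1}, so there is no in-paper argument to compare against. Your overall strategy --- virtual localization, evaluation of the vertex at $0$ via the ELSV/completed-cycle formalism, and reassembly in $\Lambda^{\infty/2}V$ --- is indeed the family of ideas used in \cite{OPP1}. But as a proof your sketch has a genuine gap: the entire analytic content of the theorem is deferred. You ``claim'' that the equivariant localization contribution at $0$ assembles into $\AAA_{k+1}$, i.e.\ into $\frac{1}{u}\mathcal{S}(uz)^{tz}\sum_k\frac{(e^{uz/2}-e^{-uz/2})^k}{(tz+1)_k}\mathcal{E}_k(uz)$, but the only input you actually invoke --- the non-equivariant $\GW$/Hurwitz correspondence of \cite{OPH/GW} --- pins this series down only in the non-equivariant specialization. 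The exponent $tz$ on $\mathcal{S}(uz)$ and the Pochhammer denominators carry strictly more information than the completed-cycle statement; extracting them requires the full equivariant evaluation of the Hodge-integral vertex (the $\Lambda^{\vee}$ factors at the tangent weights together with the edge terms of the virtual normal bundle), and that evaluation \emph{is} the theorem of \cite{OPP1}. Attributing the $\mathcal{S}$- and Pochhammer factors loosely to ``cotangent lines at the marked point'' and ``normal weights $t,-t$'' does not substitute for this computation.

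Your proposed scaffolding --- ``first establish the statement in the degenerate regime governed by $\widetilde{\AAA}$, then transport via $W$'' --- also does not work as stated. Conjugation by the dressing operator gives
$$\big\langle v_\emptyset\big|\textstyle\prod_i\AAA_{k_i+1}\,e^{\alpha_1}\big|\mu\big\rangle
=\big\langle v_\emptyset\big|\textstyle\prod_i\widetilde{\AAA}_{k_i+1}\,W^{-1}e^{\alpha_1}\big|\mu\big\rangle\, ,$$
since $W$ fixes the covacuum but certainly does not fix $e^{\alpha_1}|\mu\rangle$; this is exactly the manipulation that produces \eqref{eq:evGW} in Section \ref{einbein}, \emph{after} Theorem \ref{jjjjj} is already known. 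So the $\widetilde{\AAA}$-correlator against the undressed vector $e^{\alpha_1}|\mu\rangle$ is a different quantity from the one in the theorem, and it has no independent geometric meaning from which one could ``transparently'' establish it and then dress. In the logic of this paper, Theorem \ref{ThmDressingEq} and Corollary \ref{pqq2} are consumers of Theorem \ref{jjjjj}, not ingredients of its proof; your plan inverts that order.
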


\subsection{$\GW$  in terms of the Fock space}\label{ffbbtt}
To state the analogous formula for invariants of 
$$X=\CC^2 \times \proj^1$$ with
descendents  placed at the fixed point $[0]\in \CC^2\times\proj^1$,
a slight modification
$\bar{\AAA}_k$
of operator $\AAA_k$ is required. Let
$$ \bar{\AAA}_k=s^2\Psi(\AAA_k)
\, ,$$
where $\Psi$ is the following homomorphism of $\CC[t]$-algebras:
$$\Psi:\widetilde{\mathfrak{gl}}(V)[[u^{\pm 1},t^{\pm 1}]]\to 
\widetilde{\mathfrak{gl}}(V)[[u^{\pm 1},t^{\pm 1},s^{\pm 1}]],
\quad \Psi(u)=ius,\quad \Psi(\alpha_k)=\alpha_k/s^k\, .$$

The above modification of operators is chosen in such way that the
identification 
$$\bar{\AAA}_{k+1}=\tau_k([0])\, ,\quad k\in \ZZ$$
defines a homomorphism from the subalgebra of 
$\widetilde{\mathfrak{gl}}(V)[[u^{\pm 1}, t^{\pm 1}, s^{\pm 1}]]$ 
generated $\AAA_k$ to the algebra $\mathsf{Heis}_X$
of Section \ref{negd}.
 Moreover, the action of $$\bar{\AAA}_k\,, \quad  k<0$$ on 
the vacuum is consistent 
with \eqref{ActionVacuum}.
After combining previous remarks with Theorem \ref{jjjjj}, 
we obtain the following formula for 
invariants with the relative condition $\mu_1([\infty])\dots\mu_m([\infty])$ 
over $\infty\in \proj^1$.

\begin{proposition}\label{prp:evGW}
The
$\mathsf{T}_0$-equivariant Gromov-Witten
theory of $X$ satisfies
$$\big \langle \tau_{k_1}([0])\tau_{k_2}([0])\dots\tau_{k_n}([0]) 
\, |\, \mu\big \rangle^{\mathsf{GW},\mathsf{T}_0,\bullet}_{X}=\big \langle \bar{\AAA}_{k_1+1}\bar{\AAA}_{k_2+1}\dots\bar{\AAA}_{k_n+1} e^{\alpha_1}\, |\, \mu
\big \rangle^{\mathcal{F}}\,. $$
\end{proposition}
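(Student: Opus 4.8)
The plan is to obtain the formula for $X=\CC^2\times\proj^1$ from the relative curve theory of $\proj^1$ recorded in Theorem \ref{jjjjj}, using $\mathsf{T}_0$-equivariant virtual localization together with the Mumford relation to account for the extra $\CC^2$-factor. First I would apply virtual localization to the moduli of stable relative maps to $X$ (relative to the fiber over $\infty$) with respect to the one-parameter subtorus of $\mathsf{T}_0$ scaling $\CC^2$ with weights $s,-s$, keeping the $\proj^1$-action (weight $t$) intact. Since the origin is the only fixed point of $\CC^2$, every fixed stable map factors through $\{0\}\times\proj^1\cong\proj^1$, so the fixed loci are exactly the spaces of stable relative maps to $\proj^1$ governed by \cite{OPP1}. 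The integrand then splits into the $T\proj^1$-contribution, which reproduces the $\CC^*$-equivariant $\proj^1$ theory, and the contribution of the $\CC^2$-normal directions, which carries all the new $s$-dependence.

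Next I would evaluate the $\CC^2$-normal contribution. As $T\CC^2$ is trivial with weights $s$ and $-s$, its obstruction piece $H^1(C,\OO_C)$ is the dual Hodge bundle $\mathbb{E}^\vee$, and the inverse equivariant Euler class of the virtual normal bundle collapses by Mumford's relation $c(\mathbb{E})c(\mathbb{E}^\vee)=1$ — equivalently $\prod_i(s^2-x_i^2)=s^{2g}$ for the Chern roots $x_i$ of $\mathbb{E}$ — to the pure monomial $(-1)^{g+1}s^{2g-2}$ in genus $g$. Combined with the genus weight $u^{2g-2}$ this is precisely the substitution $u\mapsto ius$, since $(ius)^{2g-2}=(-1)^{g+1}s^{2g-2}u^{2g-2}$. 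In parallel, each descendent insertion restricts as $\mathrm{ev}_i^*[0\times 0]=e_{\mathsf{T}_0}(T_0\CC^2)\cdot\mathrm{ev}_i^*[0]=-s^2\,\mathrm{ev}_i^*[0]$ on the fixed locus, reducing $\tau_{k_i}([0])$ on $X$ to $\tau_{k_i}([0])$ on $\proj^1$ up to a factor $s^2$, while the relative insertions at $\infty$ acquire weights that implement the rescaling $\alpha_k\mapsto\alpha_k/s^k$ in the Fock-space expression.

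With these factors assembled I would invoke Theorem \ref{jjjjj} for the $\proj^1$ bracket and recognize the accumulated $s$-dependence as exactly the homomorphism $\Psi$ (given by $u\mapsto ius$ and $\alpha_k\mapsto\alpha_k/s^k$) followed by multiplication by $s^2$ per operator. This is the defining relation $\bar{\AAA}_{k+1}=s^2\Psi(\AAA_{k+1})$, so the right-hand side $\langle\,\prod_i\bar{\AAA}_{k_i+1}\,e^{\alpha_1}\,|\,\mu\,\rangle^{\mathcal{F}}$ emerges. Finally I would verify the boundary behaviour: that $\bar{\AAA}_k$ for $k<0$ acts on the vacuum consistently with \eqref{ActionVacuum}, so negative descendents are correctly normalized, and that the degree and curve-class factors absorbed into $e^{\alpha_1}$ and $|\mu\rangle$ are compatible with the disconnected ($\bullet$) conventions.

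The main obstacle I anticipate is the exact bookkeeping of the powers and signs of $s$ — reconciling the per-insertion Euler factor $-s^2$, the genus factor $(-1)^{g+1}s^{2g-2}$, and the relative-weight rescaling — and checking that the factors of $i$ in $\Psi(u)=ius$ absorb precisely the signs $(-1)^{g+1}$ so that the clean identity $\bar{\AAA}=s^2\Psi(\AAA)$ holds with no residual scalar. Equally delicate is confirming that the Hodge contribution genuinely truncates via Mumford's relation in the equivariant rather than the ordinary setting, which is what makes the $\CC^2$-factor a pure monomial in $s$ and legitimizes reducing the whole comparison to the curve theory of \cite{OPP1}.
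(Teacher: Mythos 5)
Your proposal is correct and takes essentially the same route as the paper: the paper derives Proposition \ref{prp:evGW} by noting that the Mumford identity for Hodge classes reduces the $\mathsf{T}_0$-equivariant theory of $\CC^2\times\proj^1$ to the $\CC^*$-equivariant theory of $\proj^1$ up to simple factors of $s$, which are absorbed into the definition $\bar{\AAA}_k=s^2\Psi(\AAA_k)$ (together with the consistency check for the negative modes acting on the vacuum), and then invokes Theorem \ref{jjjjj}. Your localization computation with the normal-bundle factor $(-1)^{g+1}s^{2g-2}$ matching $(ius)^{2g-2}$ simply makes explicit the ``simple factors of $s$'' that the paper leaves implicit.
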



\subsection{$\PT$ in terms of Fock space}
The set of half-integers indexes the standard basis
$$\{\, e_i\, \}_ {i\in \mathbb{Z}+\frac{1}{2}}$$ of \(V\). 
We identify the vector space $V$ with
\(z^{1/2}\CC[z][[z^{-1}]]\) by 
$$ e_i \mapsto z^i\, .$$
Then, the operator
$\alpha_n$ acts as
multiplication by \(z^{-n}\) 
 on
\(z^{1/2}\CC[z][[z^{-1}]]\).
We define the operator $H$ on $V$ by
 \(z\frac{d}{dz}\) on \(z^{1/2}\CC[z][[z^{-1}]]\).
On the sheaf side, the insertion of the descendants is given by the following
formula involving $H$.

\begin{proposition}\label{prp:evPT}The
$\mathsf{T}_0$-equivariant stable pairs
theory of $X$ satisfies
$$q^{-|\mu|}\big \langle \prod_{j=1}^\ell \Hr^{\DT}_{k_j}([0])\, 
\big|\, \mu\big \rangle^{\DT,\mathsf{T}_0}_X=(-s)^{\ell}[x^{\vec{k}}]
\big \langle  
\prod_{j=1}^{\ell}e^{x_isD}e^{\alpha_1}\, \big|\, \mu
\big \rangle^{\mathcal{F}},$$
where $D=H+\alpha_1$.
\end{proposition}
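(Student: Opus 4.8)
The plan is to compute the left-hand side by $\mathsf{T}_0$-equivariant localization on the moduli of stable pairs $P_n(X, d[\proj^1])$ relative to the fiber over $\infty$, and to repackage the localization sum as the Fock-space matrix element on the right, in exact parallel with the Gromov--Witten computation behind Proposition \ref{prp:evGW} and Theorem \ref{jjjjj}. First I would describe the $\mathsf{T}_0$-fixed loci. A fixed stable pair is supported on the thickening of $0\times\proj^1$ cut out by a monomial profile; the relative condition pins the profile along the divisor at $\infty$ to the partition $\mu$, and the remaining freedom is the $0$-dimensional cokernel of the section, which is concentrated at the fixed point $0 = 0\times 0$. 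Thus each fixed locus is indexed by combinatorial vertex data with a single leg $\mu$, and the series $\sum_n q^n$ becomes a sum over such data weighted by the equivariant vertex and edge contributions.

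I would then settle the descendent-free case $\ell = 0$ first. The capped $1$-leg stable pairs vertex relative to $\infty$ is known from \cite{MNOP1, MOOP, pt}, and in the free-field formalism it is exactly the matrix element $\langle e^{\alpha_1}\,|\,\mu\rangle^{\mathcal{F}}$: the operator $e^{\alpha_1}$ is the smooth cap at $0$, the vector $|\mu\rangle$ records the relative boundary condition at $\infty$, and the prefactor $q^{-|\mu|}$ normalizes the minimal Euler characteristic in the class $d[\proj^1]$. This matches the shape of the right-hand side when no descendents are present.

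The heart of the argument is the identification of the descendent insertion. I would compute the $\mathsf{T}_0$-equivariant Chern character $\ch_k(\mathbb{F} - \mathcal{O})$ of the universal stable pair, restricted to $[0]$, as a generating sum over the boxes of the configuration weighted by their tangent characters, and then apply the dressing $\mathcal{S}^{-1}(x/\theta)$ that defines $\mathrm{H}^{\DT}(x)$. At the fixed point one has $\theta^{-2} = -c_2(T_X)|_{[0]} = s^2$, since the cross terms $(s_1+s_2)t$ cancel under $s_1=-s_2=s$, so that $\mathcal{S}^{-1}(x/\theta) = \mathcal{S}(sx)^{-1}$ is precisely the equivariant edge normalization; with this dressing I expect the insertion to be realized on $\mathcal{F}$ by the operator $(-s)\,e^{xsD}$ with $D = H + \alpha_1$. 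The two terms of $D$ have transparent meanings: $H = z\frac{d}{dz}$ carries the grading along $\proj^1$ (equivalently, the Euler-characteristic weight), while $\alpha_1$, acting as multiplication by $z^{-1}$, encodes the module generator at the cap. A structural identity I would exploit is the conjugation $e^{xsD} = e^{\alpha_1}\,e^{xsH}\,e^{-\alpha_1}$, valid on $V$, which ties the descendent operator directly to the cap operator $e^{\alpha_1}$ of the base case; extending to several insertions by forming $\prod_j e^{x_j s D}$ and extracting the coefficient $[x^{\vec{k}}]$ then yields the claimed formula.

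The main obstacle is precisely this last identification: matching the equivariant Chern character of the universal sheaf at $[0]$ with $e^{xsD}$ on the nose, and verifying that the $\mathcal{S}^{-1}(x/\theta)$ dressing is exactly what turns the naive box-weight generating function into the clean exponential --- the sheaf-side analogue of the $\mathcal{S}$-factors built into the operators $\AAA$ of Section \ref{nottt}. A more delicate point hidden inside this step is that $H$ is only defined on $\mathcal{F}$ after regularization, so the conjugation identity above must be lifted to the infinite-wedge representation with due attention to the central cocycle; controlling this anomaly, and checking that the ordering of the factors $e^{x_j s D}$ together with the implicit intermediate states reproduces the cohomological product $\prod_j \mathrm{H}^{\DT}_{k_j}([0])$, is where the real work lies.
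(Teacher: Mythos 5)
There is a genuine gap: your plan is organized around the full $\mathsf{T}_0$-equivariant vertex localization, and the step you yourself flag as ``where the real work lies'' --- matching the box-weight generating function of $\ch_k(\mathbb{F}-\OO)$ at the fixed point with the exponential $e^{xsD}$ --- is left unresolved. The paper never performs this vertex computation. Its key input, which is absent from your proposal, is a vanishing statement that collapses the moduli problem: for curve class $d[\proj^1]$ the space $P_n(X,d)$ is empty for $n<d$, and for $n>d$ the virtual class vanishes in the $\mathsf{T}_0$-theory (the sheaf-side analogue of Mumford's relation, quoted from \cite{MPT}); only $n=d$ survives, where $P_d(X,d)\cong \mathsf{Hilb}_d(\CC^2)$ is nonsingular of expected dimension and $\ch(\OO-\mathbb{F})=\ch(\mathcal{I})$. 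At that point there is no vertex combinatorics at all: Nakajima's identification of $\bigoplus_n H_{\mathsf{T}_0}(\mathsf{Hilb}_n(\CC^2))$ with $\mathcal{F}$, equivariant Grothendieck--Riemann--Roch, and the known diagonal form of multiplication by the tautological Chern character give
$$(e^{xs/2}-e^{-xs/2})^{-1}\pi_*\ch(\mathcal{I})(x)=\pi_{\mathcal{F}}(e^{xsH})\, ,$$
and the operator $D=H+\alpha_1$ then appears simply from $e^{\alpha_1}H=(H+\alpha_1)e^{\alpha_1}$. Without the vanishing for $n\neq d$, your fixed-locus description (thickenings of $0\times\proj^1$ with cokernels at $0$) forces you to evaluate the full $1$-leg stable-pairs vertex of \cite{pt2} and to prove a nontrivial combinatorial identity identifying it with $e^{xsD}$ --- a statement that is not established in the paper and that you give no route to.

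Several of your peripheral observations are correct and do match the paper: the evaluation $\theta^{-2}=s^2$ at the fixed point, the conjugation $e^{xsD}=e^{\alpha_1}e^{xsH}e^{-\alpha_1}$ relating the descendent operator to the cap, and the role of $|\mu\rangle$ as the relative boundary condition. But these are the last five percent of the argument. The missing ninety-five percent is the reduction to the Hilbert scheme of points of $\CC^2$, and I would encourage you to see why the $\mathsf{T}_0$-specialization $s_1=-s_2$ is exactly what makes that reduction possible --- it is the same mechanism (Mumford's relation and its sheaf-theoretic counterpart) that motivates the entire $1$-leg analysis of Section \ref{einbein}.
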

\begin{proof}
By standard arguments \cite{MNOP1,pt}, the moduli space
$P_n(X,d)$ is empty if $n<d$. On the other hand if \(n>d\) the virtual cycle on
\(P_n(X,n)\) vanishes in the \(\mathsf{T}_0\)-theory \cite{MPT}.
If $n=d$,  
$$P_n(X,n) =\mathsf{Hilb}_n(\CC^2)$$ 
is nonsingular of expected dimension, and 
$$\text{ch}(\mathcal{O}-\mathbb{F})= \text{ch}(\mathcal{I})$$
 where \(\mathcal{I}\) on $\CC^2 \times \mathsf{Hilb}_n(\CC^2)$ is the universal ideal sheaf
associated to $\mathsf{Hilb}_n(\CC^2)$,
$$\pi:\CC^2 \times \mathsf{Hilb}_n(\CC^2) \rightarrow \mathsf{Hilb}_n(\CC^2)\, .$$

On the other hand, Nakajima's construction provides a
 natural identification between the Fock space and 
$$\bigoplus_{n=0}^\infty H_{\mathsf{T}_0}(\mathsf{Hilb}_n(\CC^2))\, .$$
By equivariant Grothendieck-Riemann-Roch,
 $\pi_*\text{ch}(\mathcal{I})$ is expressible in terms of the
Chern character of the tautological
sheaf over the Hilbert scheme $\mathsf{Hilb}_n(\CC^2)$. 
The operator multiplication by the latter
 Chern character is diagonal in the basis of torus fixed points and has 
 a simple expression in terms of $H$ (see, for example, \cite{WangAndAll}).
 In the Fock space model, we have,
$$(e^{xs/2}-e^{-xs/2})^{-1}\pi_*\text{ch}(\mathcal{I})(x)=\pi_{\mathcal{F}}(e^{xsH})\, .$$
Further discussion of the properties of the operator $\pi_{\mathcal{F}}(e^{xsH})$
can be found in
\cite[Section 2.2.1]{OPH/GW}.
 In other words, we have  
$$\Big \langle \prod_{j=1}^\ell \Hr^{\DT}_{k_j}([0])\, \Big|\, 
\mu\Big\rangle^{\DT,\mathsf{T}_0}_X=q^{|\mu|}(-s)^\ell [x^{\vec{k}}]
\Big \langle  e^{\alpha_1}
\prod_{j=1}^\ell e^{x_jsH}\, \Big|\, \mu\Big\rangle^{\mathcal{F}}\, .$$
The last claim of the Proposition follows from  the formula
$e^{\alpha_1}H=(H+\alpha_1)e^{\alpha_1}$.
\end{proof}

\subsection{The dressing operator and the $\GW/\PT$ operators} 
The dressing operator $$\overline{W}=\Psi(W)$$ drastically 
simplifies the formula for the Gromov-Witten invariants of $X$. 
Indeed, by the results of Section \ref{dresso}, we have:

\begin{gather} \label{eq:dress}
\overline{W}^{-1}\left( \sum_{n\in\ZZ} x^{n} \bar{\AAA}_n\right) 
\overline{W}=\sum_{n>0} \frac{(iu)^{n-1}x^n}{(1+tx)_n}\bar{\alpha}_n+
\frac{1}{t}\sum_{n<0} \frac{(iu)^{n-1}x^n}{(1+tx)_n}\bar{\alpha}_n\, ,\\
\overline{W}^{-1}(\alpha_1+H)\overline{W}=\mathrm{D}\, ,\\
\mathrm{D}=\frac{\alpha_1}{s}-
\sum_{n>0}\left( H+n/2\right)\left(\frac{t}{iu}\right)^n\alpha_{-n}.\label{eq:opD}
\end{gather}
where we define
$$\bar{\alpha}_k=s\alpha_k\, ,\quad\ \ \  \bar{\alpha}_{-k}=st\alpha_{-k}-t\delta_{k-1}+\delta_{k-2}iu\, , \ \ \quad k>0\, .$$

Immediately from the formulas, we see that the operators
 $\bar{\alpha}_k$ satisfy the same relations as the operators 
$\mathfrak{a}_k([0])$ from $\mathsf{Heis}_X$.  
Moreover, since $\overline{W}$ is upper-triangular (and thus preserves
the vacuum), we have the following formulas for the invariants:
\begin{eqnarray}\label{eq:evGW}
\Big\langle\prod_{j=1}^\ell\mathfrak{a}_{k_j}([0])\Big|\mu\Big\rangle^{\mathsf{GW},\mathsf{T}_0,\bullet}_X&=&
\Big\langle\prod_{j=1}^\ell\bar{\alpha}_{k_j}\overline{W}^{-1}e^{\alpha_1}\Big|\mu\Big\rangle^{\mathcal{F}}\, , \\
\label{eq:evPT}
  q^{-|\mu|}\Big\langle\prod_{j=1}^\ell\Hr^{\DT}_{k_j}([0])\Big|\mu
  \Big\rangle^{\DT,\mathsf{T}_0}_X&=&(-s)^\ell[x^{\vec{k}}]
\Big\langle\prod_{j=1}^\ell e^{x_js\mathrm{D}}
\overline{W}^{-1}e^{\alpha_1}\Big|\mu\Big\rangle^{\mathcal{F}}\, .
\end{eqnarray}

To prove \eqref{eq:evGW}, we start from definition \eqref{tyty}.
The first formula is equivalent to the evaluation of the generating functions for \(\mathfrak{a}_k\) from the definition (\ref{tyty}):
\begin{multline*}
\Big\langle\prod_i\sum_{n_i}\tau_{n_i}([0])x_i^{n_i}\Big|\mu\Big\rangle^{\mathsf{GW},\mathsf{T}_0,\bullet}_X = \\
\Bigg\langle \prod_i\Bigg( \sum_{n_i>0} \frac{(iux_i)^{n_i-1}}{(1+tx_i)_{n_i}}\mathfrak{a}_{n_i}([0])+
    \frac{1}{t}\sum_{n_i<0} \frac{(iux_i)^{n_i-1}}{(1+tx_i)_{n_i}}\mathfrak{a}_{n_i}([0])\Bigg)\Bigg|\mu\Bigg\rangle^{\mathsf{GW},\mathsf{T}_0,\bullet}\, .
\end{multline*}
Then, using Proposition \ref{prp:evGW}, the vacuum preservation of \(\overline{W}^{-1}\),  and \eqref{eq:dress}, we find
\begin{eqnarray*}
\Big\langle\prod_i\sum_{n_i}\tau_{n_i}([0])x_i^{n_i}\Big|\mu\Big\rangle^{\mathsf{GW},\mathsf{T}_0,\bullet}_X
 &=& 
\Big\langle \prod_i \bar{\AAA}(x_i)e^{\alpha_1}\Big|\mu\Big\rangle^{\mathcal{F}}
\\ &= &
\Big\langle\overline{W}^{-1}\prod_i\bar{\AAA}(x_i) e^{\alpha_1}\Big|\mu\Big\rangle^{\mathcal{F}} \\ &=&
\Bigg\langle\prod_i\Bigg( \sum_{n_i>0} \frac{(iu x_i)^{n_i-1}}{(1+tx_i)_{n_i}}\bar{\alpha}_{n_i}+
        \frac{1}{t}\sum_{n_i<0} \frac{(iux_i)^{n_i-1}}{(1+tx_i)_{n_i}}\bar{\alpha}_{n_i}\Bigg)\overline{W}^{-1}e^{\alpha_1}\Bigg|\mu\Bigg\rangle^{\mathcal{F}}\, .
\end{eqnarray*}
Equation \eqref{eq:evGW} follows from these two equations.
The proof of \eqref{eq:evPT} is simpler (and uses Proposition \ref{prp:evPT}).

In order to approach Theorem \ref{thm:two_leg} in the 1-leg case, we will require the following
result.
 
\begin{prop}\label{BosonFermion}
The following identity holds in $\mathsf{End}(\mathcal{F})[[u^{\pm 1},t^{\pm 1}, s^{\pm 1}]]$:
$$ e^{xs\mathrm{D}}=\oint_{|y|=1/\epsilon}\frac{\sqrt{dydw}}{y-w} \exp\Big(\frac{iu}{2st}(w^{2}-y^{2})
  +\frac{1}{s}(y-w)\Big):\exp\Big( \sum_{n\ne 0} \frac{\alpha_n}{n}\big(y^{-n}-w^{-n}\big)\Big):\, ,$$
where the integral is taken on the surface defined by the equation
\[ye^{-iyt/u}=we^{-iwt/u}e^{sx}.\]
\end{prop}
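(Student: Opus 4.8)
The plan is to treat the identity as an instance of the boson--fermion correspondence and to obtain the right-hand side by conjugating the known vertex realization of $e^{xsH}$. The starting point is a purely algebraic simplification: since $[\alpha_1,H]=\alpha_1$ and $\alpha_1$ commutes with itself, one has $e^{\alpha_1}He^{-\alpha_1}=H+\alpha_1$, exactly as in the proof of Proposition~\ref{prp:evPT}. Combining this with the definition \eqref{eq:opD} of $\mathrm{D}=\overline{W}^{-1}(\alpha_1+H)\overline{W}$ gives
\begin{equation*}
e^{xs\mathrm{D}}=\overline{W}^{-1}e^{\alpha_1}e^{xsH}e^{-\alpha_1}\overline{W}=G^{-1}\,e^{xsH}\,G,\qquad G=e^{-\alpha_1}\overline{W}.
\end{equation*}
Because $\alpha_1$ and $\overline{W}$ are exponentials of one-body operators, $G$ is group-like, so conjugation by $G$ transports the fermion fields $\psi(y),\psi^*(w)$ by a reparametrization of the spectral variable together with scalar factors. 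The entire computation is therefore reduced to understanding this transport.

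First I would record the vertex form of $e^{xsH}$, following \cite[Section 2.2.1]{OPH/GW}: writing $\phi(y)=\sum_{n\neq 0}\tfrac{\alpha_n}{n}y^{-n}$, the operator $e^{xsH}$ equals the integral of $\tfrac{\sqrt{dydw}}{y-w}:e^{\phi(y)-\phi(w)}:$ over the scaling curve $y=e^{xs}w$, this curve being the classical characteristic of the generator $H$ (whose invariant is $\zeta$ itself). I would then propagate this expression through the two conjugations making up $G$. Conjugation by $e^{-\alpha_1}$ multiplies the vertex operator by a scalar, obtained from the single contraction $[\alpha_1,:e^{\phi(y)-\phi(w)}:]=-(y-w):e^{\phi(y)-\phi(w)}:$; after the equivariant rescaling $\Psi$ (which sends $\alpha_1\mapsto\alpha_1/s$) this produces precisely the linear prefactor $\exp(\tfrac1s(y-w))$. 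The dressing $\overline{W}$ accounts for the two remaining features of the statement: it must deform the scaling curve $y=e^{xs}w$ into the Lambert-type surface $ye^{-iyt/u}=we^{-iwt/u}e^{sx}$ and generate the Gaussian prefactor $\exp(\tfrac{iu}{2st}(w^2-y^2))$.

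The analysis of the $\overline{W}$-conjugation is the heart of the argument and the step I expect to be the main obstacle. Rather than any closed form for $\overline{W}$, I would use its defining differential equation \eqref{DiffEqW}: because $B$ is assembled from $H$ and $H^2$ with coefficients that are series in $Z=tS/u$, conjugation by $\overline{W}=\Psi(W)$ becomes, on the level of the spectral variable, a first-order flow in $u$ whose $H$-part rescales the variable and whose $H^2$-part acts as a second-order heat-type operator. Integrating this flow is exactly what replaces the linear invariant $\zeta$ of $H$ by the nonlinear one $\zeta e^{-i\zeta t/u}$, turning the scaling curve into the stated surface, while the quadratic $H^2$-term integrates to the Gaussian factor $\exp(\tfrac{iu}{2st}(w^2-y^2))$. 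The delicate points are to show that \eqref{DiffEqW} produces exactly these two effects with the stated constants, and that the half-density $\tfrac{\sqrt{dydw}}{y-w}$ transforms covariantly under the resulting change of spectral coordinate.

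Finally I would fix the normalization. As $x\to 0$ the surface degenerates to the diagonal $y=w$, all prefactors and the vertex operator tend to $1$, and the residue of $\tfrac{\sqrt{dydw}}{y-w}$ along $y=w$ returns the identity, matching $e^{0}=\mathrm{id}$. Equivalently, letting $R(x)$ denote the right-hand side, one can verify $R(0)=1$ together with $\tfrac{d}{dx}R=s\mathrm{D}\,R$ directly from the contraction rules $[\alpha_k,:e^{\phi(y)-\phi(w)}:]=-(y^{k}-w^{k}):e^{\phi(y)-\phi(w)}:$ and $[H,:e^{\phi(y)-\phi(w)}:]=(y\partial_y+w\partial_w):e^{\phi(y)-\phi(w)}:$, the $x$-derivative entering only through the surface constraint, and then conclude by uniqueness of solutions of this first-order ordinary differential equation in $x$.
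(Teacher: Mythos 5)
Your algebraic reduction is sound: from $\mathrm{D}=\overline{W}^{-1}(\alpha_1+H)\overline{W}$ and $e^{\alpha_1}He^{-\alpha_1}=H+\alpha_1$ one does get $e^{xs\mathrm{D}}=\overline{W}^{-1}e^{\alpha_1}\,e^{xsH}\,e^{-\alpha_1}\overline{W}$, and the boson/fermion correspondence is the right tool. But the step you yourself flag as the main obstacle --- determining how conjugation by $\overline{W}$ transports the vertex operator --- is precisely where the proof lives, and your proposal does not supply it. The operator $W$ is defined only as the solution of the ordinary differential equation \eqref{DiffEqW}; it is a $u$-ordered exponential of elements involving $H^2 Z^2/(1-Z)^2$, and there is no closed form to conjugate by. Your heuristic that ``the $H$-part rescales the variable and the $H^2$-part integrates to the Gaussian'' is not justified and in fact misattributes the origin of the prefactor: conjugation by exponentials of $H^2$ acts on $\psi(y)=\sum_k\psi_k y^k$ by $y^k\mapsto e^{ck^2}y^k$, a heat-kernel transformation that does \emph{not} send $\Gamma_{\pm}$ to a reparametrized $\Gamma_{\pm}$ times a scalar (this is exactly why the operators $\AAA^{(m)}=e^{1/S}e^{uH^2/2}S^me^{-uH^2/2}e^{-1/S}$ are hard). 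In the actual argument the Gaussian $\exp\bigl(\frac{iu}{2st}(w^2-y^2)\bigr)$ comes from a \emph{multiplication} operator, namely $\mathrm{M}_{e^{\mathrm{g}}}$ with $\mathrm{g}(z)=-\frac{iu}{2stz^2}+\frac{1}{sz}-\frac12\log(1-\frac{iu}{zt})$, i.e.\ from an $\alpha_2$-type conjugation, while the Lambert curve comes from a reparametrization operator $\mathrm{R}_\theta$ with $\theta(z)=z^{-1}e^{-iu/(tz)}$.

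What is missing, concretely, is the explicit diagonalization of $\mathrm{D}$. The paper never conjugates by $\overline{W}$ at all: it realizes $s\mathrm{D}$ as a first-order differential operator in the spectral variable $z$, solves the eigenvalue problem $s\mathrm{D}f=\lambda f$ in closed form, reads off from the eigenfunctions the factorization $s\mathrm{D}=\mathrm{M}_{e^{\mathrm{g}}}\mathrm{R}_\theta\,(sH)\,\mathrm{R}_\theta^{-1}\mathrm{M}_{e^{\mathrm{g}}}^{-1}$, and then computes the action of these two explicit one-body operators on $\psi,\psi^*$ (the reparametrization lemma plus $e^{c\alpha_k}\Gamma_\pm e^{-c\alpha_k}=e^{\pm cx^k}\Gamma_\pm$). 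Any completion of your route must recover this data, since $\overline{W}^{-1}e^{\alpha_1}$ and $\mathrm{M}_{e^{\mathrm{g}}}\mathrm{R}_\theta$ agree up to a diagonal operator; so the eigenfunction computation is unavoidable, and the ODE \eqref{DiffEqW} is a detour rather than a substitute. Your fallback --- checking $R(0)=1$ and $\frac{d}{dx}R=s\mathrm{D}R$ directly --- is a legitimately different strategy, but as written it is a one-sentence sketch: the term $\sum_{n>0}(H+n/2)(t/iu)^n\alpha_{-n}$ in $\mathrm{D}$ acts on $:e^{\phi(y)-\phi(w)}:$ by operator \emph{product}, not commutator, so you would need the full contraction calculus (including double contractions from the quadratic $H$) together with the $x$-derivative of the constraint surface, and the $x\to 0$ normalization is itself singular (at $w=y$ the factor $\Gamma_+(y)\Gamma_-(w)=(1-w/y)^{-1}:\Gamma_+\Gamma_-:$ degenerates), so it requires more care than ``all prefactors tend to $1$.''
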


In the statement of Proposition \ref{BosonFermion}, we have used normal ordering notation:
\begin{equation*}
 :\alpha_i\alpha_{-i}:
 =\begin{cases} \alpha_i\alpha_{-i}\, ,\quad i<0\\
                                    \alpha_{-i}\alpha_i\, ,\quad i>0\, .
                                  \end{cases}
\end{equation*}

\subsection{Proof of Proposition~\ref{BosonFermion}}
We have seen that the operators
\(H\) and $\alpha_n$ act respectively
 as \(z\frac{d}{dz}\) and multiplication by \(z^{-n}\) 
 on
\(z^{1/2}\CC[z][[z^{-1}]]\). Thus, \(\mathrm{D}\), defined by 
equation \eqref{eq:opD}, becomes  a differential
operator acting on the functions of \(z\).

We view  $\mathrm{D}$ as acting on functions of $z$ from the left.
Consider the eigenvalue problem 
\begin{equation}\label{gg555}
s\mathrm{D}f=\lambda f,\quad f\in z^{1/2}\mathrm{Hol}(\CC^*)\, ,
\end{equation}
where $\mathrm{Hol}(\CC^*)$ denotes holomorphic single-valued functions
on $\CC^*$. The Laurent
series expansion  provides a map from \(z^{1/2}\mathrm{Hol}(\CC^*)\) to the
 completion \(\overline{V}\) of the space \(V\).

Since $\alpha_n$ acts as multiplication by $z^{-n}$, the eigenvalue
equation \eqref{gg555} is equivalent to the following 
ODE:

\begin{gather*}
\left[\frac{1}{z}-\frac{d}{dz}\frac{stz^2}{iu-tz}-s
  a\left(\frac{tz}{iu}\right)   \right]f= \lambda f\, ,\quad \quad
a(x)=\frac{1}{2}\frac{x}{(1-x)^2}\, ,
\end{gather*}
with solution
$$ f=z^{1/2-\lambda/s} \exp\left[\frac{iu}{2stz^2}-\left(\frac{1}{s}+\frac{iu\lambda}{st}
\right)\frac{1}{z}\right]\left(1-\frac{tz}{iu}\right)^{-1/2}.$$
The condition $f\in z^{1/2}\CC[z][[z^{-1}]]$ leads to the eigenfunctions:
\begin{equation*}
f_k=(z^{-1}e^{-\frac{iu}{tz}})^{k+1/2}\exp\left[ -\frac{iu}{2stz^2}
+\left(\frac{1}{s}\right)\frac{1}{z}\right]\left(1-\frac{iu}{tz}\right)^{-1/2}\,,
\quad \quad
\lambda_k=s\left(k+\frac{1}{2}\right)\, ,
\end{equation*}
for $k\in\ZZ$.

For  $h(z)\in 1+z^{-1}\CC[[z^{-1}]]$, the operator of multiplication by $h(z)$,
$$ \mathrm{M}_{h}: z^k\mapsto z^k\cdot h(z)\, ,$$
is an invertible endomorphism of \(V\). Similarly, for $\theta(z)\in z^{-1}+z^{-2}\CC[[z^{-1}]]$, the reparametrization operator
$$ \mathrm{R}_{\theta}:  z^k\mapsto \theta(z)^k$$
is invertible. We can therefore
restate the above computation in terms of multiplication and reparametrization operators,
\begin{equation}\label{xx556}
sH=\mathrm{R}^{-1}_{\theta} \mathrm{M}_{e^\mathrm{g}}^{-1}\mathrm{D}\mathrm{M}_{e^\mathrm{g}}\mathrm{R}_\theta \, ,
\end{equation}
where 
\begin{equation*}
\mathrm{g}(z)= -\frac{iu}{2stz^2}
+\left(\frac{1}{s}\right)\frac{1}{z}-\frac12 \log\left(1-\frac{iu}{zt}\right)\, ,
\quad \quad \theta(z)=z^{-1}e^{-\frac{iu}{tz}}\, .
\end{equation*}

In the proof of Proposition  \ref{BosonFermion} so far,
we have studied operators in \(\mathrm{End}(V)\). The claim
of Proposition \ref{BosonFermion},
on the other hand, is about the operators in \(\mathrm{End({\mathcal{F}})}\).
For the remainder of the proof, we will work in
\(\mathrm{End({\mathcal{F}})}\).
 We will use formula \eqref{xx556} to
find an expression for $\mathrm{D}$ in terms of the operators $$\alpha_n\in  \mathrm{End({\mathcal{F}})} $$
via  the boson/fermion correspondence.

Let us quickly review the key points  of the boson/fermion correspondence.
 It is customary to assemble fermionic operators in generating functions:
$$\psi(x)=\sum_{k\in \ZZ+1/2} \psi_k x^k\, ,\quad
\quad \psi^*(x)=\sum_{k\in \ZZ+1/2} \psi^*_k x^{-k}\, .$$ 
The zero mode of the product of two fermionic generating function gives  the
exponential of the operator $sxH$:
\begin{equation}\label{bfbf}
 e^{sxH}=[y^0]\psi(y)\psi^*(ye^{-sx})\, .
\end{equation}
Thus, to express $e^{sx\mathrm{D}}$ in terms of the operators $\alpha_n$
using \eqref{xx556},
 we must compute the action of the reparametrization and
scaling operators on $\psi(x)$ and $\psi^*(x)$.

\begin{Lemma}  \label{pp23}
For  $g\in z^{-1}+z^{-2}\CC[[z^{-1}]]$, we have:
$$ \mathrm{R}_{g}\psi(x) \mathrm{R}^{-1}_{g}=-\psi\left(\frac{1}{g^{inv}(1/x)}\right)x(\log( g^{inv}(1/x)))_x\, ,$$
$$ \mathrm{R}_{g}\psi^*(x)\mathrm{R}^{-1}_g=\psi\left(\frac{1}{g^{inv}(1/x)}\right)\, ,$$
where \(f_x\) stands for the \(x\)-derivative of \(f\) and $g^{inv}$ is the inverse function 
$$g^{inv}(g(x))=x\, .$$
 \end{Lemma}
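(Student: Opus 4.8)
The plan is to reduce the statement to the single-particle space $V$ and to use the functoriality of the fermionic construction $\Lambda^{\infty/2}$. On $V$ the operator $\mathrm{R}_g$ is precomposition, $(\mathrm{R}_g f)(z)=f(g(z))$, since $\mathrm{R}_g z^k=g(z)^k$; and $v\mapsto \psi_v$ (``wedge with $v$'') is linear with $\psi_{z^k}=\psi_k$. The basic functoriality of $\Lambda^{\infty/2}$ gives, for the Fock lift of an invertible single-particle operator, the conjugation rule $\mathrm{R}_g\,\psi_v\,\mathrm{R}_g^{-1}=\psi_{\mathrm{R}_g v}$ on creation operators, and the contragredient rule on the annihilators $\psi^*_v$. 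I would first record these two rules (the second being forced once the first is known, as explained below), so that the whole computation becomes the re-expansion of the generating series of the vectors $\mathrm{R}_g z^k$.

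Applying the creation rule termwise to $\psi(x)=\sum_k x^k\psi_{z^k}$ gives $\mathrm{R}_g\psi(x)\mathrm{R}_g^{-1}=\psi_{\Phi(\cdot,x)}$ with $\Phi(z,x)=\sum_{k\in\ZZ+1/2}x^k g(z)^k$, a vector to be re-expanded in the basis $\{z^j\}$. Writing $\delta(a)=\sum_{n\in\ZZ}a^n$, one recognizes $\Phi(z,x)=(xg(z))^{1/2}\delta(xg(z))$ as a formal multiplicative delta supported at $g(z)=1/x$, i.e. at $z=g^{inv}(1/x)$. The coefficient of $z^j$ is then extracted by the residue $\oint \frac{dz}{2\pi i\, z^{j+1}}\Phi(z,x)$; substituting $a=xg(z)$ and using $\oint\frac{da}{2\pi i\, a}\delta(a)F(a)=F(1)$ localizes the integral at $z=g^{inv}(1/x)$ and produces exactly $\frac{g(z)}{z\,g'(z)}\big|_{z=g^{inv}(1/x)}$ times $(g^{inv}(1/x))^{-j}$. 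Writing $\rho(x)=1/g^{inv}(1/x)$ and simplifying with $(g^{inv})'(w)=1/g'(g^{inv}(w))$, the scalar prefactor becomes $-x(\log g^{inv}(1/x))_x$ and the reparametrized argument becomes $\rho(x)$, which is the first formula.

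For the companion identity I would avoid repeating this for the annihilators and instead pin it down from the canonical anticommutation relations. Since $\{\psi(x),\psi^*(y)\}=\sum_{i\in\ZZ+1/2}(x/y)^i=\delta(x/y)$ is a scalar, it is preserved under conjugation by $\mathrm{R}_g$. Functoriality guarantees that $\mathrm{R}_g\psi^*(y)\mathrm{R}_g^{-1}=\mu(y)\,\psi^*(\rho_*(y))$ is again a reparametrized field; inserting the already-established transform of $\psi$ into the conjugated anticommutator and comparing supports forces $\rho_*=\rho$, while the Jacobian identity $\delta(\rho(x)/\rho(y))=\frac{\rho(y)}{y\,\rho'(y)}\,\delta(x/y)$ together with $\lambda(y)=y\rho'(y)/\rho(y)=-y(\log g^{inv}(1/y))_y$ forces $\mu\equiv 1$ (the Jacobian that would accompany $\psi^*$ is exactly cancelled by that of $\psi$). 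This yields the second formula with no prefactor, and in particular shows that its right-hand side is $\psi^*$. The main technical obstacle throughout is the disciplined handling of the formal delta function and the associated change-of-variables residues --- keeping track of the half-integer shift $a^{1/2}$ and of the contour orientation so that the sign of $-x(\log g^{inv}(1/x))_x$ comes out correctly; everything else is routine once the residue calculus for $\delta$ is set up.
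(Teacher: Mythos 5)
Your proof of the first identity is essentially the paper's own argument in different notation: writing $\mathrm{R}_g\psi(x)\mathrm{R}_g^{-1}=\psi_{\Phi(\cdot,x)}$ with $\Phi(z,x)=\sum_k x^k g(z)^k$ and extracting the coefficient of $z^j$ by a residue is exactly the computation of $\sum_k r_{jk}x^k=\mathrm{R}_g^*(x^j)$, which is what the paper does, and your substitution $a=xg(z)$ is the same change of variables the paper performs in $\oint y^m g^k(1/y)\frac{dy}{y}$. Where you genuinely depart is the second identity: the paper gets it by taking adjoints of the first relation, $\mathrm{R}_g\psi^*(y)\mathrm{R}_g^{-1}=\bigl((\mathrm{R}_g^*)^{-1}\psi(1/y)\mathrm{R}_g^*\bigr)^*$, whereas you use invariance of the anticommutator under conjugation to pin down the transform of $\psi^*$ from that of $\psi$. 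This spares you a second adjoint computation, but it needs one more sentence: the contragredient rule only gives $\mathrm{R}_g\psi^*(y)\mathrm{R}_g^{-1}=\psi^*_{w(y)}$ for some vector $w(y)$, not a priori the reparametrized form $\mu(y)\psi^*(\rho_*(y))$. The ansatz is harmless --- since the transformed creators $\psi_{\mathrm{R}_g v}$ still span all creation operators and the pairing $\{\psi_u,\psi^*_w\}=(u,w)_z$ is nondegenerate, the anticommutators determine $w(y)$ uniquely and your candidate $\psi^*(\rho(y))$ is then verified by the delta-function Jacobian identity --- but it is doing real work and should be stated. Two smaller cautions: the anticommutator is $(x/y)^{1/2}\delta(x/y)$ rather than $\delta(x/y)$ because of the half-integer grading (this washes out on the support $x=y$, as you note); and the sign of the prefactor $-x(\log g^{inv}(1/x))_x$ hinges on the contour orientation after the substitution $a=xg(z)$, which reverses orientation since $g\in z^{-1}+z^{-2}\CC[[z^{-1}]]$ --- this is precisely the step where the paper's own computation keeps an explicit minus sign, and the test case $g(z)=1/z$ is a quick way to check that your convention matches the stated formula.
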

\begin{proof}
The matrix coefficients of the operator $\mathrm{R}_g$ are given by the expansion 
$$ \mathrm{R}_g(z^k)=\sum_i r_{ik} z^i.$$ 
We then have the following formulas 
with summation indices \(i,k\) ranging in the
set \(1/2+\mathbb{Z}\):
\begin{eqnarray*}
\mathrm{R}_g\psi(y) \mathrm{R}_g^{-1}(f)&=&\sum_k \mathrm{R}_g( y^kz^k\wedge \mathrm{R}_g^{-1}(f))\\
&=&\sum_k y^k \mathrm{R}_g (z^k)\wedge f\\ &=&
\sum_{k,i} r_{ik}y^k z^i\wedge g\\
&=&\sum_i \mathrm{R}_g^*(y^i) z^i\wedge g\, ,
\end{eqnarray*}
where $\mathrm{R}^*_g$ is the linear operator adjoint with respect to the scalar product $(\cdot,\cdot)_y$ with the orthonormal basis $y^i$.
To complete the proof, we must compute the adjoint operator $\mathrm{R}^*_g$:
\begin{eqnarray*}
(y^m, \mathrm{R}_{g}(y^k))_y&=&\oint y^m g^k(1/y)\frac{dy}{y}\\
&=&-\oint \left(\frac{1}{g^{inv}(1/w)}\right)^m w^{-k}\frac{g^{inv}(1/w)_w}{g^{inv}(1/w)} dw\\&=&
-\Big(w(\log(g^{inv}(1/w)))_w {\mathrm{R}}_{1/g^{inv}(1/w)}(w^m),w^k\Big)_w\, .
\end{eqnarray*}
We conclude
\[\mathrm{R}^*_{g}(y^i)=y(\log(g^{inv}(1/y))_y)\mathrm{R}_{1/g^{inv}}(y^i)\, ,\]
which implies the first equation of the Lemma.
The second equation,
$$ \mathrm{R}_g\psi^*(y)\mathrm{R}^{-1}_g=((\mathrm{R}^*_g)^{-1}(\psi(1/y)) \mathrm{R}_g^*)^*=(\psi(\mathrm{R}_g^{-1}(1/y)))^*=\psi^*\left(
\frac{1}{\mathrm{R}_{g}^{-1}(1/y)}\right)\, ,$$
then follows from the first.
\end{proof}


By applying Lemma \ref{pp23}, we obtain
\begin{eqnarray}\nonumber
[y^0]\mathrm{R}_\theta\psi(\xi)\psi^*(\xi e^{-sx})\mathrm{R}^{-1}_\theta
&=&-[\xi^0]\psi\left(\frac{1}{\theta^{inv}(1/\xi)}\right)\psi^*\left(\frac{1}{\theta^{inv}(e^{-sx}/\xi)}\right)
\xi (\ln(\theta^{inv}(1/\xi)))_\xi\\
\label{fred999}
&=&\oint_{|y|=1/\epsilon}\psi(y)\psi^*(w)\frac{dy}{y}\, ,
\end{eqnarray}
where $y=\frac{1}{\theta^{inv}(1/\xi)}$, $w=\frac{1}{\theta^{inv}(e^{-sx}/\xi)}$ and \(\epsilon\) is close to zero. The variables $y,w$ 
are subject to the constraint:

$$  \theta(1/w)=\theta(1/y)e^{sx}.$$

The boson/fermion correspondence is written in terms of these generating functions and  the following auxiliary operators:
\begin{gather*}
\psi(x)=Tx^{C+1/2}\Gamma_+(x)\, ,\quad \psi^*(x)=T^{-1}x^{-C+1/2}\Gamma_-(x)\, ,\\
\Gamma_{\pm}(x)=e^{\pm \alpha_-(x)}e^{\pm\alpha_+(x)}\, ,\quad \alpha_{\pm}(x)=
\mp\sum_{k>0}\alpha_{\pm k}\frac{x^{\mp k}}{k}\, ,
\end{gather*}
where \(C\) is the standard charge operator \cite{OPH/GW} and $T$ is the shift operator
$$ T(v_S)=v_{s_1+1,s_2+1,s_3+1,\dots}\, .$$

The boson/fermion correspondence \eqref{bfbf} together with 
\eqref{xx556} and \eqref{fred999} yields:
\begin{gather*} 
e^{sx\mathrm{D}}=\oint_{|y|=1/\epsilon} \frac{dy}{y}\sqrt{\frac{w}{y}} \mathrm{S} \Gamma_+(y)
\Gamma_-(w)\mathrm{S}^{-1}\, ,\quad w e^{-iuw/t}=ye^{-iuy/t}e^{-sx}\, ,
\end{gather*}
where \(\epsilon\) is very small and \(S=M_{e^{\mathrm{g}}}\) is the operator of multiplication by \(e^{\mathrm{g}}\),
\begin{equation*}
\mathrm{g}(z)= -\frac{iu}{2stz^2}
+\left(\frac{1}{s}\right)\frac{1}{z}-\frac12 \log\left(1-\frac{iu}{zt}\right)\,.
\end{equation*}

By Taylor expansion, we obtain
$$ e^{c\alpha_k}\Gamma_\pm(x) e^{-c\alpha_k}=e^{\pm cx^k}\Gamma_\pm(x)\, .$$
Hence, the conjugation by $\mathrm{S}$ produces the following result:
$$\oint\frac{dy}{y} \sqrt{\frac{w}{y}}
\exp(\mathrm{g}(1/y)-\mathrm{g}(1/w))\Gamma_+(y)\Gamma_-(w)\, .$$

Finally, we rewrite the integral in terms of semiforms.
Indeed, the  implicit equation for $w$ and $y$ implies:
$$dy\left(1-\frac{t}{iuy} \right)=dw\left(1-\frac{t}{iuw}\right)\, .$$
On the other hand, we have
$$e^{\mathrm{g}(1/z)}=\exp\left[-\frac{iuz^2}{2st}
+\frac{z}{s}\right]\left(1-\frac{iuz}{t}\right)^{-1/2}\, .$$
We obtain
$$e^{xs\mathrm{D}}=\oint\frac{\sqrt{dydw}}{y} \Gamma_+(y)\Gamma_-(w)\, ,
\quad \quad w e^{-iuw/t}=ye^{-iuy/t}e^{-sx}\, .$$
Combining the above equation with 
$$ \Gamma_+(y)\Gamma_-(w)=:\Gamma_+(y)\Gamma_-(w):/(1-w/y)$$
yield the formula in the statement of the Lemma. \qed

\subsection{The 1-leg case}
We prove here a weaker 1-leg version of Theorem \ref{thm:two_leg}.
 
\begin{theorem} \label{thm:one_leg}  After the  change of variables 
$q=-e^{iu}$, 
we have for any collection of \(k_i\ge 0\)
 $$ \Big\langle\prod_i \Hr^{\GW}_{k_i}({\mathsf{p}})\Big|\mu,\emptyset,\emptyset\Big\rangle^{\GW,\mathsf{T}_0,\bullet}_{U,D}= q^{-|\mu|}
 \Big\langle\prod_i \Hr^{\DT}_{k_i}({\mathsf{p}})\Big|\mu,\emptyset,\emptyset\Big\rangle^{\DT,\mathsf{T}_0}_{U,D},$$
 where \(\mathsf{T}_0\subset \mathsf{T}\) is the subtorus preserving the symplectic form on \(\CC^2\).
\end{theorem}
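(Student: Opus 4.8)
The plan is to reduce both sides to matrix elements in the Fock space $\mathcal{F}$ taken against the single vector $\overline{W}^{-1}e^{\alpha_1}$, and then to identify the two by an explicit vertex-operator computation. In the $1$-leg case $\mu_2=\mu_3=\emptyset$ the capped vertex $\langle\,\cdot\,\mid\mu,\emptyset,\emptyset\rangle_{U,D}$ is computed by the $\mathsf{T}_0$-equivariant relative theory of $X=\CC^2\times\proj^1$ relative to $\infty\in\proj^1$, with $\mathsf{p}=[0]$, so Propositions~\ref{prp:evGW} and~\ref{prp:evPT} apply. Through the dressing-operator formulas \eqref{eq:evGW} and \eqref{eq:evPT}, the left-hand side becomes $\langle\prod_i\widehat{\Hr^{\GW}_{k_i}}\cdot\overline{W}^{-1}e^{\alpha_1}\mid\mu\rangle^{\mathcal{F}}$, where $\widehat{\Hr^{\GW}_{k_i}}$ is the operator obtained from \eqref{eq:H^GW}, \eqref{eq:phiz} under the localization $\mathfrak{a}_n(\mathsf{p})\mapsto\bar{\alpha}_n$, while the right-hand side becomes
\[
q^{-|\mu|}\Big\langle\prod_i\Hr^{\DT}_{k_i}(\mathsf{p})\,\Big|\,\mu\Big\rangle^{\DT,\mathsf{T}_0}_{U,D}=(-s)^\ell[x^{\vec k}]\Big\langle\prod_i e^{x_is\mathrm{D}}\,\overline{W}^{-1}e^{\alpha_1}\,\Big|\,\mu\Big\rangle^{\mathcal{F}}.
\]
Since both pair the same vector with $v_\emptyset$, it suffices to match the generating operator $\widehat{\Hr^{\GW}}(x)=\sum_k x^{k+1}\widehat{\Hr^{\GW}_k}$ with the stable-pairs vertex operator $e^{xs\mathrm{D}}$ of Proposition~\ref{BosonFermion}.

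I would then make the equivariant specialization explicit. At the fixed point of $\CC^2\times\proj^1$ one has $c_1(T_X)=t$ and $c_2(T_X)=-s^2$, so $\theta=1/s$. Substituting $\mathfrak{a}_n\mapsto\bar{\alpha}_n$, $\theta=1/s$, $c_1=t$ into \eqref{eq:phiz}, the operator $\theta\phi(z)$ splits into an operator part $\sum_{n\neq 0}\tfrac{\alpha_n}{n}(izt/u)^{-n}$, coming from the linear pieces $\bar{\alpha}_n=s\alpha_n$ ($n>0$) and $st\alpha_{-k}$ ($k>0$), together with purely scalar contributions arising only from the shift terms $-t\delta_{k-1}+iu\delta_{k-2}$ in $\bar{\alpha}_{-1}$ and $\bar{\alpha}_{-2}$. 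This presents $\widehat{\Hr^{\GW}}(x)$ as $\tfrac{x}{\theta}\,\mathrm{Res}_{w=\infty}$ of an explicit semiform on the master curve $ye^y=we^we^{-xs}$ of \eqref{eq:master_curve}.

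The core step is the change of variables $y\mapsto -i\,y\,u/t$, $w\mapsto -i\,w\,u/t$, which sends the $\phi$-argument $iy_{\GW}t/u$ to the stable-pairs variable $y$. Under it I expect: (i) the master curve becomes $ye^{-iuy/t}=we^{-iuw/t}e^{-xs}$, exactly the constraint curve produced in the proof of Proposition~\ref{BosonFermion}; (ii) the operator part becomes $\sum_{n\neq0}\tfrac{\alpha_n}{n}(y^{-n}-w^{-n})$; (iii) the scalar contributions of the $\bar{\alpha}_{-1},\bar{\alpha}_{-2}$ shifts sum precisely to $\tfrac{1}{s}(y-w)+\tfrac{iu}{2st}(w^2-y^2)$, which is the Gaussian prefactor of Proposition~\ref{BosonFermion}; and (iv) the semiform $\tfrac{\sqrt{dy\,dw}}{y-w}$ is invariant under this linear substitution. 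Thus the integrand of $\widehat{\Hr^{\GW}}(x)$ matches the integrand of $e^{xs\mathrm{D}}$ exactly.

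The main obstacle I anticipate is the last piece of bookkeeping: reconciling the two coefficient-extraction prescriptions for the now-identical integrand — the Gromov-Witten residue $\tfrac{x}{\theta}\mathrm{Res}_{w=\infty}$ with the grading $[x^{k+1}]$, versus the stable-pairs contour $\oint_{|y|=1/\epsilon}$ together with the factor $(-s)^\ell[x^{\vec k}]$ and the $\mathcal{S}^{-1}(x/\theta)$ normalization built into the definition of $\Hr^{\DT}$. Carrying this out requires the Lambert-function parametrization of the master curve used to interpret $\mathrm{Res}_{w=\infty}$ as extraction of the coefficient of $w^{-1}$, and careful tracking of signs and of the $q=-e^{iu}$ dictionary. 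The scalar computation in (iii), although elementary, is the conceptual heart: the nonstandard vacuum action recorded in $\bar{\alpha}_{-1}$ and $\bar{\alpha}_{-2}$ must reproduce precisely the Gaussian factor, and any normalization slip there destroys the correspondence. A secondary point needing justification is the initial reduction of the $1$-leg capped vertex with two empty legs to the relative theory of $\CC^2\times\proj^1$ to which Propositions~\ref{prp:evGW} and~\ref{prp:evPT} apply.
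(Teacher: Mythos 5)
Your proposal is correct and follows essentially the same route as the paper: both sides are reduced via Propositions~\ref{prp:evGW} and~\ref{prp:evPT} and the dressing operator to Fock-space pairings against $\overline{W}^{-1}e^{\alpha_1}|\mu\rangle$, and the operators are matched by rescaling $y,w$ by $iu/t$ in Proposition~\ref{BosonFermion} so that the master curves agree and the scalar shifts in $\bar{\alpha}_{-1},\bar{\alpha}_{-2}$ absorb the Gaussian prefactor --- which is exactly the content of the paper's equation \eqref{eq:expD} and the homomorphism $\mathsf{F}(\bar{\alpha}_k)=\mathfrak{a}_k(\mathsf{p})$. The only difference is presentational: the paper packages your substitution $\mathfrak{a}_n(\mathsf{p})\mapsto\bar{\alpha}_n$ as the algebra homomorphism $\mathsf{F}$ together with the induced map of Fock spaces, while you run the same identification in the opposite direction.
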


\begin{proof}
 We start by rewriting Proposition \ref{BosonFermion} after
the change of variables $$y \mapsto i\frac{uy}{t}\,,  \quad \quad 
w\mapsto i\frac{uw}{t}$$ in the form
\begin{equation}\label{eq:expD}
  e^{xs\mathrm{D}}=\oint_{|y|=1/\epsilon}\frac{\sqrt{dydw}}{y-w} :\exp\Bigg(\frac{\bar{\phi}(y)}{s}-\frac{\bar{\phi}(w)}{s}\Bigg):
\end{equation}
\begin{equation*}
\bar\phi(z)=\sum_{n>0}\frac{\bar{\alpha}_n}{n} \left(\frac{izt}{u}\right)^{-n}+\frac{1}{t}\sum_{n<0}\frac{\bar{\alpha}_n}{n} \left(\frac{izt}{u}\right)^{-n}\, .
\end{equation*}
where \(y,w\) are constrained by \(ye^y=we^we^{sx}\) and
\[\bar{\alpha}_k=s\alpha_k,\quad \bar{\alpha}_{-k}=st\alpha_{-k}-t\delta_{k-1}+\delta_{k-2}iu,\quad k>0.\]

We define a homomorphism{\footnote{The equivariant cohomology of
$U$ is generated over $\mathbb{Q}[s,t]$ by the class $\mathsf{p}$
of the fixed point.}} 
\(\mathsf{F}: \mathsf{Heis}\rightarrow \mathsf{Heis}_U\) by
\[\mathsf{F}(\bar{\alpha}_k)=\mathfrak{a}_k(\mathsf{p})\, ,\quad \quad k\in \ZZ\setminus \{0\}\, .\]
The linear map \(\mathsf{F}\) is a homomorphism of algebras because
\[[\mathsf{F}(\bar{\alpha}_k),\mathsf{F}(\bar{\alpha}_m)]=
[\mathfrak{a}_k(\mathsf{p}),\mathfrak{a}_m(\mathsf{p})]=k\delta_{k+m}\int 
\mathsf{p}\cdot \mathsf{p}=k\delta_{k+m}(-s^2t)\, .\]
Moreover, $\mathsf{F}$ sends the LHS of (\ref{eq:expD}) to the LHS of (\ref{eq:H^GW}) since 
$$t=c_1(T_U)\,,  \quad \quad s^2=-c_2(T_U)\, .$$

Let \(\mathcal{F}\) be the standard Fock space for \(\mathsf{Heis}\) with the vacuum vector \(v_\emptyset\),
\[\alpha_kv_\emptyset=0, \quad k<0.\] 
We denote by \(\mathcal{F}_{geom}\) the Fock space space defined by the action
of \(\mathsf{Heis}_U\)
 on the vacuum:
\[\mathfrak{a}_{k}({\mathsf{p}})u_\emptyset=\left[\int \big(-t\delta_{k+1}+\delta_{k+2}iu\big)\cdot {\mathsf{p}} \right] u_\emptyset\, ,\quad k<0\, .\]
The homomorphism \(\mathsf{F}\) induces a canonical 
homomorphism of the Fock space,
\begin{equation}\label{34pp}
\mathsf{F}: \mathcal{F} \rightarrow \mathcal{F}_{geom}\, ,
\end{equation}
 by matching
vacuum vectors \(\mathsf{F}(v_\emptyset)=u_\emptyset\) since
$$\mathsf{F}(\bar{\alpha}_k) u_\emptyset=\mathfrak{a}_k u_\emptyset \, ,\quad k<0\, .$$

The Fock space \(\mathcal{F}_{geom}\) has a natural linear functional which
 evaluates Gromov-Witten invariants. Since the elements \[
\prod_{i>0}\mathfrak{a}_i^{k_i}(\mathsf{p}) u_\emptyset\] form a basis of \(\mathcal{F}_{geom}\), we have a natural linear isomorphism between
\(\mathsf{Heis}_U^+\) and \(\mathcal{F}_{geom}\). The isomorphism allow us to define
the following linear functional on \(\mathcal{F}_{geom}\):
\begin{equation}\label{linfun}
\Psi_\mu^{\GW}(\Phi)=\big\langle \Phi\big|\mu\big\rangle^{\mathsf{GW},\mathsf{T}_0,\bullet}_\beta\, ,
\end{equation}
where \(\beta=|\mu|\proj^1\).
Formula \eqref{eq:evGW} implies that under the 
identification \eqref{34pp}  of
the Fock spaces \(\mathcal{F}\) and \(\mathcal{F}_{geom}\), the  linear
functional \eqref{linfun} corresponds to
a pairing in $\mathcal{F}$ 
with the vector 
$$v_\mu=\overline{W}^{-1}e^{\alpha_1}|\mu\rangle\in \mathcal{F}\, .$$

On the stable pairs side, equation \eqref{eq:evPT} evaluates the the
right side of the correspondence of Theorem \ref{thm:one_leg}:
$$q^{-|\mu|}\Big\langle\prod_{j=1}^\ell\Hr^{\DT}_{k_j}([0])\Big|\mu\Big\rangle^{\DT,\mathsf{T}_0}_X=(-s)^\ell[x^{\vec{k}}]
\Big\langle\prod_{j=1}^\ell e^{sx_j\mathrm{D}}
\overline{W}^{-1}e^{\alpha_1}\Big|\mu\Big\rangle^{\mathcal{F}}\, .$$
Since $\mathsf{F}$ sends the LHS of (\ref{eq:expD}) to the LHS of (\ref{eq:H^GW}),
we obtain
$$(-s)^\ell[x^{\vec{k}}]\Big\langle\prod_{j=1}^\ell e^{sx_j\mathrm{D}}
\overline{W}^{-1}e^{\alpha_1}\Big|\mu\Big\rangle^{\mathcal{F}} =
\Big\langle\prod_{j=1}^\ell \Hr^{\GW}_{k_i}({\mathsf{p}})\Big|\mu,\emptyset,\emptyset\Big\rangle^{\GW,\mathsf{T}_0,\bullet}_{X}$$
via the evaluation of $\Psi_\mu^{\GW}$ as a pairing in $\mathcal{F}$ 
with $v_\mu$.
\end{proof}

The following two remarks about Theorem \ref{thm:one_leg} will be important
for our further study of the descendent correspondence in
Section \ref{sec:uniq-cor}:

\begin{enumerate}
\item[(i)] The $\bullet$-series of Theorem \ref{thm:one_leg}
  agrees exactly with the Gromov-Witten series defined in Section \ref{ccppvv} with
  {\em no collapsed connected components of genus greater than or equal to 2},
  $$ \Big\langle\prod_i \Hr^{\GW}_{k_i}({\mathsf{p}})\Big|\mu,\emptyset,\emptyset\Big\rangle^{\GW,\mathsf{T}_0,\bullet}_{U,D}=
  \Big\langle\prod_i \Hr^{\GW}_{k_i}({\mathsf{p}})\Big|\mu,\emptyset,\emptyset\Big\rangle^{\GW,\mathsf{T}_0}_{U,D}\, .$$
  The above equality follows from the formula for the virtual class of
  the moduli space of maps of a
  collapsed connected components{\footnote{See Section 2.3 of \cite{GetPan}.}} of genus $g\geq 2$,
  $$ (-1)^g\, (c_3(T_{U/D})-c_2(T_{U/D})c_1(T_{U/D})) \, \lambda_{g-1}^3\, $$
which vanishes for the torus $\mathsf{T}_0$.

  \item[(ii)]
    Since the Gromov-Witten bracket is compatible with the hat operation
\eqref{fredfredfred},
we can equivalently write the conclusion of Theorem \ref{thm:one_leg}
after applying (i) 
as:
\begin{equation}
\label{yayay} 
 \Big\langle\prod_i \widehat{\Hr}^{\GW}_{k_i}({\mathsf{p}})\Big|\mu,\emptyset,\emptyset\Big\rangle^{\GW,\mathsf{T}_0}_{U,D}= q^{-|\mu|}
 \Big\langle\prod_i \Hr^{\DT}_{k_i}({\mathsf{p}})\Big|\mu,\emptyset,\emptyset\Big\rangle^{\DT,\mathsf{T}_0}_{U,D}\, .
\end{equation}
\end{enumerate}

\subsection{Lambert function}
\label{sec:labmert-function}

We explain how  to convert the contour integral in definition
 \eqref{eq:H^GW} to an explicit
formula. The first step is to 
solve the constraint equation (\ref{eq:master_curve}),
\begin{equation*}
ye^y=we^w e^{-x/\theta}\, .
\end{equation*}
We interpret both sides as formal power series in \(x\), and
 then we can find the solution
by induction on degree of \(x\). 
In particular, the first few terms of the expansion are:
 \begin{equation}\label{ww33ww}
w(y)=y-\frac{xy}{\theta(y+1)}+\frac{x^2y}{2\theta^2(y+1)^3}+\frac{x^3y(2y-1)}{6\theta^3(y+1)^5}+O(x^4)\, .
 \end{equation}

We  can therefore
write explicit power series for the integrand in  formula  \eqref{eq:H^GW}
 and find an effective formula  for \(\Hr^{\GW}(x)\):
 \begin{equation}
   \label{eq:LambertH^GW}
\frac{x}{\theta}   \mathrm{Res}_{y=\infty}\left(dy
     \left(\frac{dw(y)}{dy}\right)^{1/2}\frac{:e^{\theta(\phi(y)-\phi(w(y)))}:}{y-w(y)}\right),
 \end{equation}
 where  
$w(y)$ is given by \eqref{ww33ww} and \(\phi(z)\) is by \eqref{eq:phiz}.

\section{Uniqueness of the correspondence}
\label{sec:uniq-cor}

\subsection{Properties of the correspondence matrix}
\label{sec:uniq-corr-1}

We define an augmented partition size $|\cdot |^+$ by the formula
$$|\lambda|^+=\sum_{i}(1+\lambda_i)\, .$$
Let  \(\mathcal{P}\) be the set of all partitions.
Let \(\mathcal{P}_d\) the set of partitions of 
augmented size \(d\), and let
\(\mathcal{P}_{\le d}\) be the set of partitions of augmented
size less than or equal to \(d\),
$$\mathcal{P}_d \subset \mathcal{P}_{\le d} \subset \mathcal{P}\, .$$
As in Section \ref{sec:non-equiv-limit}, we set 
$$\Hr^{\mathsf{GW}}_\mu=\prod_{i=1}^\ell \Hr^{\mathsf{GW}}_{\mu_i}\in \mathsf{Heis}^c\, , \quad \quad 
\widehat{\Hr}^{\GW}_\mu\in \mathsf{Heis}^c_+\otimes\CC[c_1,c_2^{1/2}]\, . $$

\begin{Lemma} \label{xxdd22}
For every \(\mu\in \mathcal{P}_d\),  we have
  \begin{enumerate}
  \item[(i)] \(\widehat{\Hr}^{\GW}_\mu\in \mathsf{Heis}^c_+\otimes \CC[c_1,c_2]\) ,

\vspace{5pt}
  \item[(ii)] \(\widehat{\Hr}^{\GW}_\mu=\frac{\mathfrak{a}_\mu}{(\mu-1)!}+\sum_{\lambda\in \mathcal{P}_{<d}} b(\mu,\lambda)\mathfrak{a}_\lambda\),
  \end{enumerate}
  with \(\mathfrak{a}_{\lambda}=\prod_{i=1}^\ell\mathfrak{a}_{\lambda_i}\).
\end{Lemma}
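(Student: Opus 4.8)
The plan is to read both statements off the vertex-operator formula \eqref{eq:H^GW}, exploiting the fact that the hat operation is transparent on normal-ordered exponentials. Concretely, I would write $\Hr^{\GW}_\mu$ as the coefficient extraction $[\prod_i x_i^{\mu_i+1}]$ applied to the product $\prod_i \frac{x_i}{\theta}\mathrm{Res}_{w_i=\infty}\frac{\sqrt{dy_i\,dw_i}}{y_i-w_i}:e^{\theta\phi(y_i)-\theta\phi(w_i)}:$ and normal-order the product of vertex operators. This produces a single normal-ordered exponential $:e^{\sum_i\theta(\phi(y_i)-\phi(w_i))}:$ times the pairwise Wick contractions $\Omega_{ij}$, scalar functions of $(y_i,w_i,y_j,w_j)$. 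The key simplification is that on a normal-ordered exponential $e^{\sum_{n<0}c_n\mathfrak a_n}e^{\sum_{n>0}c_n\mathfrak a_n}$ the hat merely replaces the negative-mode factor by the scalar $e^{-c_{-1}c_1+c_{-2}iu}$ (using $\widehat{\mathfrak a}_{-1}=-c_1$, $\widehat{\mathfrak a}_{-2}=iu$, $\widehat{\mathfrak a}_{\le-3}=0$, together with the commutativity of the negative modes), leaving the positive part intact. Thus $\widehat{\Hr}^{\GW}_\mu$ becomes an explicit residue integral whose operator content is a positive-mode normal-ordered exponential.

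For part (ii) I would introduce the augmented-degree grading $\deg\mathfrak a_n=n+1$ and track how the coefficient extraction fixes it. Solving $ye^y=we^we^{-x/\theta}$ by the Lambert expansion \eqref{ww33ww} and expanding the integrand, the top augmented-degree contribution comes only from the configuration with no cross-contractions ($\Omega_{ij}\to1$), exactly one positive mode taken from each factor's exponential, and trivial negative-mode scalar. This produces $\prod_i\mathfrak a_{\mu_i}=\mathfrak a_\mu$, and a Lagrange inversion of $ye^y$ (the factorials being the signature of inverting the Lambert function) computes the coefficient of each $\mathfrak a_{\mu_i}$ to be $1/(\mu_i-1)!$, giving the leading term $\mathfrak a_\mu/(\mu-1)!$. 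Every remaining contribution---any cross-contraction, any factor supplying two or more modes, or any surviving negative-mode replacement---strictly lowers the augmented degree, hence lands in the $\mathcal P_{<d}$ part, establishing the triangular shape.

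Part (i) asserts that the half-integer powers of $c_2$ introduced by $\theta=(-c_2)^{-1/2}$ cancel. A useful first reduction: the contractions $\Omega_{ij}$ are themselves $\theta$-free, since the expected $\theta^2$ from contracting the exponents is cancelled by $\theta^{-2}=-c_2$ against the $c_1c_2$ in $[\mathfrak a_n,\mathfrak a_{-n}]=n c_1c_2$; this localizes all $\theta$-dependence to the single-factor self-terms and the extraction, reducing polynomiality essentially to the case $\mu=(k)$. There I would track the $\theta$-exponent directly: each mode in the exponent at $x^j$-order carries $\theta^{1-j}$ and the prefactor $\frac{x}{\theta}\frac{(dw/dy)^{1/2}}{y-w}$ carries $\theta^{-l}x^l$, so after extracting $x^{k+1}$ and hatting, the power of $\theta$ in a monomial $c_1^au^b\mathfrak a_\lambda$ is determined by its mode-count parity. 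The goal is to show every surviving monomial shares the (even) $\theta$-parity of the scalar-coefficient leading term, so that only integer powers of $c_2$ appear. The natural tool is the symmetry of the constrained integrand under $\theta\mapsto-\theta$, $x\mapsto-x$, $\mathfrak a_n\mapsto-\mathfrak a_n$, which preserves both the constraint and $:e^{\theta(\phi(y)-\phi(w))}:$ and hence forces $\Hr^{\GW}_k$ to be a $\theta$-eigenvector of parity $(-1)^{k+1}$ \emph{before} hatting.

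The hard part---and the main obstacle---is reconciling this parity with the hat, because the hat converts the symmetry-odd modes $\mathfrak a_{-1},\mathfrak a_{-2}$ into the symmetry-even scalars $-c_1,iu$, breaking the clean eigenvalue bookkeeping. Closing the argument requires showing that, for each fixed $\mathfrak a_\lambda$, the contributions coming from different numbers of hatted negative modes organize so that the odd-$\theta$-power pieces cancel. If that combinatorial cancellation proves too delicate to verify head-on, the alternative route---consistent with the deferral announced for $\widetilde{\Hr}_{\vec k}$ in Section \ref{sec:non-equiv-limit}---is to transfer polynomiality from the manifestly $\CC[c_1,c_2]$-polynomial descendent correspondence of \cite{PPDC} via the comparison carried out elsewhere in this section, after which (i) follows from the triangular normalization established in (ii).
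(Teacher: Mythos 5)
Your part (ii) is sound and is essentially the paper's argument: the paper phrases it as homogeneity of $s\mathrm{D}$ for the grading $\deg\alpha_k=k+1$, $\deg\alpha_{-k}=-k+2$, $\deg s=\deg t=1$, which is your augmented-degree bookkeeping transported through the identification of $\Hr^{\GW}(x)$ with the image of $e^{xs\mathrm{D}}$. Likewise your first reduction for part (i) --- localize the $\theta$-dependence in scalar prefactors so that only an a priori containment in $\CC[c_1,c_2^{1/2}]$ plus a parity statement is needed --- matches the paper, which gets that containment from the fact that $s\mathrm{D}$ is a polynomial in $H$ and the $\alpha_k$ over $\CC[s,t]$.

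The genuine gap is exactly where you flag it, and the resolution is not the delicate cancellation you anticipate but a different symmetry. Your transformation $\theta\mapsto-\theta$, $x\mapsto-x$, $\mathfrak{a}_n\mapsto-\mathfrak{a}_n$ ties the $\theta$-parity of a monomial to its mode count, and the hat changes the mode count without changing the power of $\theta$ (it sends $\mathfrak{a}_{-1}\mapsto -c_1$, $\mathfrak{a}_{-2}\mapsto iu$, both $\theta$-free), so this bookkeeping cannot survive hatting; knowing that the surviving mode-count parities are constant is logically equivalent to the statement you are trying to prove, so the argument is circular as it stands. The paper instead observes that $\theta\mapsto-\theta$ \emph{alone} is already a symmetry of the residue integral: it turns the constraint $ye^y=we^we^{-x/\theta}$ into the same constraint with $y$ and $w$ exchanged, while the integrand $\frac{x}{\theta}\frac{\sqrt{dy\,dw}}{y-w}:e^{\theta\phi(y)-\theta\phi(w)}:$ is antisymmetric under $y\leftrightarrow w$ in precisely the way that absorbs the sign of $1/\theta$. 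This symmetry acts only on the scalar coefficients, never on the modes or on $x$, so it commutes with the hat and forces $\widehat{\Hr}^{\GW}(x)$ to contain only even powers of $\theta$, i.e.\ integer powers of $c_2$. Your fallback of importing polynomiality from \cite{PPDC} would also invert the paper's logic: Lemma \ref{xxdd22} feeds (via Lemma \ref{lem:uniq}) into the comparison with \cite{PPDC}, and what is genuinely deferred to \cite{PPDC} is the stronger polynomiality of $\widetilde{\Hr}_{\vec{k}}$, which involves division by $(c_1c_2)^{l-1}$.
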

\begin{proof}
  The operator \(s\mathrm{D}\) defined by \eqref{eq:opD}
 is a linear combination of  monomials in \(H\) and \(\alpha_k\) with coefficients in \(\CC[s,t]\).
  The same holds for every power of \(s\mathrm{D}\). Since the operator \( H\) is a quadratic expression\footnote{\(H=\sum_{k>0} \alpha_{-k}\alpha_{k}\).} of \(\alpha_k\) with coefficients
  in \(\CC[s,t]\),
 we conclude 
 $$\widehat{\Hr}^{\GW}(x)\in \mathsf{Heis}^c\otimes \CC[c_1,c_2^{1/2}]\, .$$

  The integral defining \(\widehat{\Hr}^{\GW}(x)\) is invariant
  with respect to the sign change $$\theta\mapsto -\theta\, .$$
 Indeed, under the sign change the constraint equation turns into
  $$ze^z=we^we^{-x/\theta}$$
  which is equivalent to the original constraint equation after 
 switching \(y\) and \(w\). On the other hand,
 the integral is unchanged after the
  switch. Thus, we have proven claim (i).

  Definition \eqref{eq:opD} is homogeneous for the 
 homological grading of the generators:
  \[\deg \alpha_k=k+1,\quad \deg \alpha_{-k}=-k+2,\quad \deg s=\deg t=1\, .\]
 The powers of \(\mathrm{D}\) are therefore also homogeneous, and claim (ii)
 follows.  
\end{proof}

\subsection{Uniqueness}
\label{sec:uniqueness}


The 1-leg $\GW/\PT$ descendent correspondence of Theorem \ref{thm:one_leg}
in the hat form of \eqref{yayay} is
\begin{equation}\label{crru}
\Hr_\mu^\PT \mapsto \widehat{\Hr}_\mu^\GW\, .
\end{equation}
The correspondence rule \eqref{crru} defines a \(\CC[c_1,c_2]\)-linear
operator 
$$\mathcal{T}:\mathcal{P}^{\PT}\rightarrow \mathcal{P}^{\GW}\, ,$$
 where \(\mathcal{P}^{\PT}\) has \(\CC[c_1,c_2]\)-basis \(\Hr_\mu^{\PT}\) and
\(\mathcal{P}^{\GW}\) has \(\CC[c_1,c_2]\)-basis \(\mathfrak{a}_\mu\).
By Lemma \ref{xxdd22} part (ii), $\mathcal{T}$ restricts to 
$$\mathcal{T}_d:\mathcal{P}^{\PT}_{\le d}\rightarrow\mathcal{P}^{\GW}_{\le d}\, ,$$
where the shifted size of the partitions are bounded in the bases
on both sides.

There are two operators that encode 1-leg relative Gromov-Witten
and stable pairs theories. 
The first operator 
$$\mathcal{M}^{\GW}_d:\mathcal{P}^{\GW}_{\le d}\rightarrow \mathcal{P}$$
has the Gromov-Witten invariants 
$$\big\langle \mathfrak{a}_\mu(\mathsf{p})\big|\lambda\big\rangle^{\GW,\mathsf{T}_0}_{U,D}
\,, \quad
\mu\in \mathcal{P}_{\le d}\,, \quad \lambda\in\mathcal{P}$$
as matrix entries.
The second operator 
$$\mathcal{M}^{\PT}_d:\mathcal{P}^{\PT}_{\le d}\rightarrow \mathcal{P}$$
has the stable pairs invariants 
$$\big\langle  \Hr^{\PT}_\mu(\mathsf{p})   \big|\lambda\big\rangle^{\PT,\mathsf{T}_0}_{U,D}
\,, \quad
\mu\in \mathcal{P}_{\le d}\,, \quad \lambda\in\mathcal{P}$$
as matrix entries.


 \begin{Lemma}\label{lem:uniq}
   The operator \(\mathcal{T}_d\) is an isomorphism and is the
unique solution of the
correspondence equation
   \[\mathcal{M}^{\GW}_d \mathcal{T}_d=\mathcal{M}_d^{\DT}\, .\]
       \end{Lemma}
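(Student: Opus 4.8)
The plan is to prove the lemma by combining the triangularity structure from Lemma~\ref{xxdd22}(ii) with the already-established correspondence of Theorem~\ref{thm:one_leg}. The key point is that the correspondence equation $\mathcal{M}^{\GW}_d \mathcal{T}_d = \mathcal{M}^{\DT}_d$ has a \emph{unique} solution precisely because $\mathcal{M}^{\GW}_d$ is injective, so that $\mathcal{T}_d$ is forced to equal $(\mathcal{M}^{\GW}_d)^{-1}\mathcal{M}^{\DT}_d$ wherever the latter makes sense; and we already know one solution exists from Theorem~\ref{thm:one_leg}.

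Let me sketch the forward-looking plan in detail. First, I would verify that $\mathcal{T}_d$ \emph{is} a solution of the correspondence equation: this is essentially the content of Theorem~\ref{thm:one_leg} rewritten in the hat form \eqref{yayay}. Explicitly, applying $\mathcal{M}^{\GW}_d$ to the image $\mathcal{T}_d(\Hr^\PT_\mu) = \widehat{\Hr}^\GW_\mu$ produces the Gromov-Witten bracket $\langle \widehat{\Hr}^\GW_\mu(\mathsf{p})\,|\,\lambda\rangle^{\GW,\mathsf{T}_0}_{U,D}$, which by \eqref{yayay} equals $q^{-|\mu|}\langle \Hr^\PT_\mu(\mathsf{p})\,|\,\lambda\rangle^{\PT,\mathsf{T}_0}_{U,D}$, i.e.\ the matrix entry of $\mathcal{M}^{\DT}_d$. (I would be careful about the $q^{-|\mu|}$ normalization and the change of variables $-q=e^{iu}$, absorbing these into the definitions of the operators so the bookkeeping is clean.) Second, I would prove that $\mathcal{T}_d$ is an isomorphism: by Lemma~\ref{xxdd22}(ii), in the augmented-size filtration $\mathcal{T}_d$ sends $\Hr^\PT_\mu$ to $\frac{\mathfrak{a}_\mu}{(\mu-1)!}$ plus strictly lower-order terms, so $\mathcal{T}_d$ is upper-triangular with invertible (nonzero rational) diagonal entries, hence invertible over the appropriate coefficient ring.

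The crux — and the main obstacle — is the \emph{uniqueness} claim, which requires showing $\mathcal{M}^{\GW}_d$ is injective, equivalently that the Gromov-Witten pairing matrix $\langle \mathfrak{a}_\mu(\mathsf{p})\,|\,\lambda\rangle^{\GW,\mathsf{T}_0}_{U,D}$ has full rank on $\mathcal{P}^{\GW}_{\le d}$. Here I would invoke the Fock-space description: by formula \eqref{eq:evGW}, the functional $\mu \mapsto \langle \prod \mathfrak{a}_{k_j}(\mathsf{p})\,|\,\lambda\rangle$ is realized as a pairing in $\mathcal{F}$ against the vectors $v_\lambda = \overline{W}^{-1}e^{\alpha_1}|\lambda\rangle$. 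The $\mathfrak{a}_\mu(\mathsf{p})$ correspond to $\bar\alpha_\mu$ acting on $\mathcal{F}$, and since the $\{\mathfrak{a}_\mu\}$ are linearly independent creation-operator monomials while the $\{v_\lambda\}_{\lambda\in\mathcal{P}}$ span (a dense subspace of) $\mathcal{F}$ — the operator $\overline{W}^{-1}$ being upper-triangular and invertible and $e^{\alpha_1}$ shifting charge appropriately — no nontrivial $\CC[c_1,c_2]$-linear combination of the $\mathfrak{a}_\mu$ can pair to zero against \emph{all} $|\lambda\rangle$. This nondegeneracy of the Fock-space pairing is what forces injectivity of $\mathcal{M}^{\GW}_d$.

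Putting the pieces together: since $\mathcal{M}^{\GW}_d$ is injective, any two solutions $\mathcal{T}_d, \mathcal{T}'_d$ of $\mathcal{M}^{\GW}_d \mathcal{T} = \mathcal{M}^{\DT}_d$ satisfy $\mathcal{M}^{\GW}_d(\mathcal{T}_d - \mathcal{T}'_d) = 0$, whence $\mathcal{T}_d = \mathcal{T}'_d$; combined with the existence from Theorem~\ref{thm:one_leg} and the triangular invertibility from Lemma~\ref{xxdd22}(ii), this yields both the isomorphism and uniqueness claims. The one subtlety I would flag is ensuring the injectivity argument is valid over the coefficient ring $\CC[c_1,c_2]$ (or its fraction field) rather than only over $\CC$ at special parameter values — I expect extending scalars to the fraction field $\CC(c_1,c_2)$, proving injectivity there, and then using that the relevant vectors already live in the $\CC[c_1,c_2]$-lattice to be the cleanest route, but verifying that the $v_\lambda$ genuinely separate the $\mathfrak{a}_\mu$ over this ring is where the real work lies.
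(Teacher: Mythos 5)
Your proposal is correct and its overall architecture coincides with the paper's: existence of a solution is Theorem~\ref{thm:one_leg} rewritten in the hat form \eqref{yayay}, invertibility of $\mathcal{T}_d$ follows from the triangularity with nonzero diagonal $\frac{1}{(\mu-1)!}$ in Lemma~\ref{xxdd22}(ii), and uniqueness reduces to an injectivity statement. Where you genuinely diverge is in \emph{which} matrix you prove injective and how. You attack $\mathcal{M}^{\GW}_d$ head-on via the Fock-space realization \eqref{eq:evGW}, arguing that the vectors $v_\lambda=\overline{W}^{-1}e^{\alpha_1}|\lambda\rangle$ are the image of a basis under an invertible (bi-unitriangular) operator and hence separate the finite-dimensional span of the $|\mu\rangle$ with $\mu\in\mathcal{P}_{\le d}$. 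The paper instead proves that $\mathcal{M}^{\PT}_d$ is injective, using the identification $\big\langle \Hr^{\PT}_\mu(\mathsf{p})\big|\lambda\big\rangle^{\PT,\mathsf{T}_0}_{U,D}=\mathbf{p}_\mu(\lambda)$ of the 1-leg stable-pairs matrix entries with values of products of shifted Newton polynomials; since the $\mathbf{p}_\mu$ form a basis of the ring of shifted symmetric functions and evaluation on partitions is injective there (the ``Fourier transform'' for $S_\infty$ of \cite{OkOl}), the rows of $\mathcal{M}^{\PT}_d$ are independent, and injectivity of $\mathcal{M}^{\GW}_d$ then comes for free from $\mathcal{M}^{\GW}_d=\mathcal{M}^{\PT}_d\mathcal{T}_d^{-1}$ once $\mathcal{T}_d$ is known to be an isomorphism. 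The trade-off is exactly the subtlety you flag at the end: your route requires justifying the adjoint/invertibility manipulations in the completed Fock space over the coefficient ring (the operator $\overline{W}^{-1}$ produces infinite energy-raising tails with coefficients that are formal series in $u$), which is doable but is where the real work would sit; the paper's route replaces this with a completely explicit, citable linear-independence theorem on the sheaf side. One cosmetic point: the normalization factor in \eqref{yayay} is $q^{-|\lambda|}$ in the boundary partition $\lambda$, not $q^{-|\mu|}$ in the descendent index — you noted the need for care here, and indeed it must be absorbed into the definition of $\mathcal{M}^{\PT}_d$ for the correspondence equation to read as stated.
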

    \begin{proof}
      That $\mathcal{T}_d$ is an
isomorphism follows from Lemma \ref{xxdd22} part (ii).
The correspondence equation is
 exactly the statement of Theorem \ref{thm:one_leg} in form \eqref{yayay}. 

To derive uniqueness, we will show that the operator  
\(\mathcal{M}_d^{\DT}\)  is injective.
     By the construction of the projective representation \(\Lambda^{\infty/2}V\) (see for example \cite[section 2.2.2]{OPH/GW}), we have:
      \[\big\langle \Hr^{\DT}_{\mu}(p)\big|\lambda\big\rangle_{U,D}^{\PT,\mathsf{T}_0}=\mathbf{p}_\mu(\lambda)\, ,\]
      where \(\mathbf{p}_\mu=\prod_i\mathbf{p}_{\mu_i}\) is the product of the shifted Newton polynomials from the ring of the shifted symmetric functions
      \(\Lambda^*=\mathbb{Q}[\mathbf{p}_1,\mathbf{p}_2,\dots]\) and
      \[\mathbf{p}_k(\lambda)=\sum_{i=1}^\infty \left[(\lambda_i-i
+\frac{1}{2})^k-(-i+\frac{1}{2})^k\right]+(1-2^{-k})\zeta(-k),\]
      is the evaluation of the shifted function at \(\lambda\).
      
      Since the products \(\mathbf{p}_\mu\) span a basis of the ring of the shifted symmetric functions and the evaluation map 
$$f\mapsto \{f(\lambda)\}_{\lambda\in\mathcal{P}}$$ is
    the  Fourier transform in representation theory of \(S_\infty\) \cite{OkOl},
 ${\mathcal{M}}^\PT_d$ is  injective.
    \end{proof}

\subsection{Comparing correspondences}
\label{sec:comp-corr}
The $\GW/\PT$ correspondence for the standard descendents \(\tau_k\)
is studied in \cite{PPDC}. Since
the descendents \(\mathfrak{a}_k\) and \(\Hr^{\DT}_k\) are the linear combinations of the standard descendents $\{ \tau_m \}_{m\le k}$, the  results of \cite{PPDC}
hold in our setting here.

\begin{theorem}\cite{PPDC} \label{pppddd} There exists an invertible transformation 
 \(\underline{\mathcal{T}}:\mathcal{P}^{\PT}\rightarrow\mathcal{P}^{\GW}\) linear over \(\CC[s_1,s_2,s_3]\) for which the correspondence equation
  \[\big\langle \underline{\mathcal{T}}(\mu)(\mathsf{p})\big|\lambda_1,\lambda_2,\lambda_3\big\rangle_{U,D}^{\GW,\mathsf{T}}
=
\big\langle \Hr^{\PT}_\mu(\mathsf{p})\big|\lambda_1,\lambda_2,\lambda_3\big\rangle_{U,D}^{\PT,\mathsf{T}}
\]
holds  for all \(\mu,\lambda_1,\lambda_2,\lambda_3\in\mathcal{P}\). Moreover,
\vspace{4pt} 
 \begin{enumerate}
  \item[(i)] $\underline{\mathcal{T}}$
    sends \(\mathcal{P}^{\PT}_{\le d}\) to \(\mathcal{P}^{\GW}_{\le d}\),
\vspace{4pt}
  \item[(ii)] the coefficients of $\underline{\mathcal{T}}$ are  polynomials in
the symmetric functions $$c_1=e_1(s_1,s_2,s_3)\, , \ \ c_2=e_2(s_1,s_2,s_3)\, ,\ \ c_3=e_3(s_1,s_2,s_3)\, .$$
    \end{enumerate}
\end{theorem}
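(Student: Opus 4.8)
The plan is to obtain Theorem~\ref{pppddd} by transporting the descendent correspondence of \cite{PPDC} from the standard bases of descendents to the bases $\Hr^{\PT}_\mu$ and $\mathfrak{a}_\mu$ used here. As recorded in the paragraph preceding the statement, both $\Hr^{\DT}_k$ and $\mathfrak{a}_k$ are triangular $\CC[s_1,s_2,s_3]$-linear combinations of the standard descendents $\tau_m$, $\ch_m$ with $m\le k$; this triangularity is with respect to descendent order and the diagonal coefficients are invertible. Hence a correspondence matrix for the standard bases conjugates to one for our bases, and the three asserted properties---existence, the filtration statement (i), and polynomiality (ii)---are each preserved under such a conjugation, provided they hold for the standard $\tau/\ch$ correspondence.

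Thus the core task is to establish existence, triangularity, and polynomiality for the standard correspondence on the capped descendent vertex, which is the content of \cite{PPDC}. The existence proof proceeds through the capped-vertex/capped-edge formalism of \cite{MOOP}. The primary (non-descendent) correspondence is known from \cite{MNOP1,MNOP2,MOOP} and supplies the base case; higher descendents are then matched by induction on descendent order. The crucial structural fact is that the correspondence is \emph{universal}: a single local transformation rule, applied simultaneously at every torus-fixed vertex, matches the two capped vertices, so that the equality of invariants over a projective toric $3$-fold reduces, via the degeneration and gluing formulas, to the vertex-level matching.

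For the filtration property (i) I would grade both sides by augmented partition size $|\cdot|^+$ and invoke the explicit leading-term computation of the correspondence (carried out in \cite{MNOP2,PPDC}): the descendent $\ch_{k+1}$ is sent to $\tau_k$ modulo descendents of strictly smaller total order. This makes $\underline{\mathcal{T}}$ block-triangular for the filtration by $\mathcal{P}^{\PT}_{\le d}$, which yields (i) immediately; since the diagonal blocks are nonzero scalars, invertibility follows at the same time. This is the standard-basis analogue of Lemma~\ref{xxdd22}(ii).

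The principal obstacle is the polynomiality statement (ii). A priori the entries of $\underline{\mathcal{T}}$ are assembled from equivariant vertex contributions and rubber integrals, and so are only rational functions of the tangent weights $s_1,s_2,s_3$, carrying denominators from the equivariant Euler classes of the fixed-locus normal bundles. To upgrade rationality to polynomiality I would exploit compatibility with the degeneration formula across all toric charts: a genuine pole in some $s_i$ would persist into a compact geometry in which the invariants are manifestly regular, forcing every such pole to cancel. Finally, the symmetry of the capped vertex under the Weyl group of $\mathsf{T}$ renders the entries symmetric in $s_1,s_2,s_3$, hence polynomials in $c_1=e_1$, $c_2=e_2$, $c_3=e_3$. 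Carrying the equivariant denominators through the degeneration formula to prove this pole cancellation is where the genuine work lies.
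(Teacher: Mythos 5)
Your overall strategy---reduce to the standard $\tau/\mathrm{ch}$ correspondence of \cite{PPDC} by a triangular, invertible change of basis, and note that existence, the filtration property, and polynomiality are all stable under such conjugation---is the right first move, and it matches the opening remark of the paper's proof. However, you have missed the one step where the paper actually does new work: the Gromov--Witten bracket $\langle\,\cdot\,|\lambda_1,\lambda_2,\lambda_3\rangle^{\GW,\mathsf{T}}_{U,D}$ appearing in the statement is \emph{not} the bracket for which Theorems 1--3 of \cite{PPDC} are proved. The results of \cite{PPDC} concern the series $\langle\,\cdot\,\rangle^{\GW,\mathsf{T},*}_{U,D}$ defined on moduli of stable relative maps with \emph{no} connected collapsed components, whereas the bracket of Section \ref{ccppvv} excludes only collapsed components of genus $\geq 2$ and therefore still receives contributions from collapsed genus $0$ and genus $1$ components. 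Your proposal treats the theorem as if it were literally the statement of \cite{PPDC} in a new basis, so the transformation you construct would intertwine the $\PT$ side with the wrong Gromov--Witten series.

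The paper closes this gap by writing an explicit conversion: a sum over set partitions $\{1,\dots,m\}=P'\sqcup P''\sqcup P'''$ in which the collapsed genus $0$ and genus $1$ pieces contribute the factors $C^0_{P'}$ and $C^1_{P''}$ (integrals over $\overline{M}_{0,n}$ and $\overline{M}_{1,n}$ against $c_1(T_{X/D})$, $c_2(T_{X/D})$, and the Hodge class). This produces an invertible operator $\underline{\mathcal{R}}:\mathcal{P}^{\GW}\to\mathcal{P}^{\GW}$, linear over $\CC[s_1,s_2,s_3]$ and satisfying properties (i) and (ii), and the desired transformation is the composition $\underline{\mathcal{T}}=\underline{\mathcal{T}}^*\circ\underline{\mathcal{R}}$. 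One must also verify that $\underline{\mathcal{R}}$ itself preserves the filtration and has polynomial coefficients, which follows from the explicit form of $C^0$ and $C^1$. A secondary remark: the theorem is attributed to \cite{PPDC}, so the capped-vertex existence argument, the leading-term triangularity, and the pole-cancellation/Weyl-symmetry argument for polynomiality that you sketch are all imported by citation rather than reproved; the effort you spend there is not wrong, but it is not where the proof of this particular statement lives.
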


\begin{proof}
  Theorems 1-3 of \cite{PPDC} prove the parallel statement of Theorem
\ref{pppddd}
  for a slightly different generating
  function of the invariants on the Gromov-Witten side:
\begin{equation}\label{gg234g}
  \big\langle\prod_{i=1}^m\tau_{k_i}(\gamma_i)|\, \lambda\big\rangle_{g,U,D}^{\GW,\mathsf{T},*}\, .
  \end{equation}
  The invariant \eqref{gg234g}
  is defined as an integral over the virtual cycle of the moduli space
  \(\overline{M}_{g,m}^*(U/D,\beta)\) of stale relative maps
  from a possibly disconnected curve of
  genus \(g\) to the pair $(U,D)$ with {\em
    no connected collapsed components}.

  By Theorems 1-3 of
\cite{PPDC}, there exists an invertible transformation 
 \(\underline{\mathcal{T}}^*:\mathcal{P}^{\PT}\rightarrow\mathcal{P}^{\GW}\) linear over \(\CC[s_1,s_2,s_3]\) for which the correspondence equation
  \[\big\langle \underline{\mathcal{T}}^*(\mu)(\mathsf{p})\big|\lambda_1,\lambda_2,\lambda_3\big\rangle_{U,D}^{\GW,\mathsf{T},*}
=
\big\langle \Hr^{\PT}_\mu(\mathsf{p})\big|\lambda_1,\lambda_2,\lambda_3\big\rangle_{U,D}^{\PT,\mathsf{T}}
\]
holds  for all \(\mu,\lambda_1,\lambda_2,\lambda_3\in\mathcal{P}\).
Moreover, \(\underline{\mathcal{T}}^*\) satisfies
the above condition (i) and (ii).

The Gromov-Witten invariants
\eqref{gg234g} of \cite{PPDC} are related to the Gromov-Witten invariants \eqref{j233}  
defined in Section \ref{ccppvv} by the
following simple equation for every target geometry $(X,D)$.
We sum over
all set partitions $$\{1,\dots,m\}=P'\sqcup P''\sqcup P'''$$
and  refinements
  $P'=\sqcup_{a=1}^A P'_a$, $P''=\sqcup_{b=1}^B P''_b$ :
  \[\big\langle\prod_{i=1}^m\tau_{k_i}(\gamma_i)|\, \mu\big\rangle_{X,D}^{\GW,\mathsf{T}}=\sum_{P}u^{-2A}\, C^0_{P'}C^1_{P''}\,
      \big\langle\prod_{i\in P'''}\tau_{k_i}(\gamma_i)
      |\, \mu\big\rangle_{X,D}^{\GW,\mathsf{T},*}, \]

\begin{eqnarray*}
    C^0_{P'}&=& \prod_{a=1}^A\, \int_X\gamma_{P'_a}\cdot \int_{\overline{M}_{0,|P'_a|}}
    \, \prod_{i\in P'_a}\tau_{k_i}\,,  \\
  C^1_{P''}&=&\prod_{b=1}^B\bigg( \int_X c_1(T_{X/D})\cdot \gamma_{P''_b} \cdot
               \int_{\overline{M}_{1,|P''_b|}} \, \prod_{i\in P''_b}\tau_{k_i}
               -\int_X c_2(T_{X/D})\cdot \gamma_{P''_b}\cdot
               \int_{\overline{M}_{1,|P''_b|}}c_1(\mathbb{E})
  \prod_{i\in P''_b}\tau_{k_i}\bigg)\, .
\end{eqnarray*}
In the formula, the moduli spaces of stable curves of genus 0 and 1
which appear correspond exactly to collapsed components.
In genus 1, $\mathbb{E}$ is Hodge bundle.
As before, we use the convention
$$\gamma_S=\prod_{i\in S}\gamma_i\, .$$

Hence, there exists an invertible transformation 
$$\underline{\mathcal{R}}:\mathcal{P}^{\GW}\rightarrow\mathcal{P}^{\GW}$$
linear over \(\CC[s_1,s_2,s_3]\) for which the correspondence equation
  \[\big\langle \underline{\mathcal{R}}(\mu)(\mathsf{p})\big|\, \lambda_1,\lambda_2,\lambda_3\big\rangle_{U,D}^{\GW,\mathsf{T},*}
=
\big\langle \tau_\mu(\mathsf{p})\big|\, \lambda_1,\lambda_2,\lambda_3\big\rangle_{U,D}^{\GW,\mathsf{T}}
\]
holds  for all \(\mu,\lambda_1,\lambda_2,\lambda_3\in\mathcal{P}\).
Moreover, the  matrix \(\underline{\mathcal{R}}\) satisfies
conditions (i) and (ii) of  Theorem \ref{pppddd}. 
The composition
$$\underline{\mathcal{T}}=\underline{\mathcal{T}}^*\circ \underline{\mathcal{R}}$$
is therefore the sought after linear transformation.
\end{proof}

\begin{Corollary} \label{coco9}The coefficients of $\mathcal{T}$ are polynomial in \(c_1,c_2\) and
  \[ {\mathcal{T}}=\underline{\mathcal{T}}|_{c_3=c_1c_2}\, .\]
  
\end{Corollary}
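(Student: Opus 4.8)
The plan is to realize both $\mathcal{T}$ and the restricted correspondence $\underline{\mathcal{T}}|_{c_3=c_1c_2}$ of Theorem~\ref{pppddd} as solutions of one and the same 1-leg correspondence equation, and then to invoke the uniqueness clause of Lemma~\ref{lem:uniq}. First I would record the arithmetic of the $\mathsf{T}_0$-specialization. The subtorus $\mathsf{T}_0\subset\mathsf{T}$ preserves the symplectic form on $\CC^2$, which forces the two tangent weights on the $\CC^2$-factor to be opposite; in the notation of Section~\ref{einbein} this is the substitution $s_1=-s_2=s$, $s_3=t$. Evaluating the elementary symmetric functions under $s_1+s_2=0$ gives $c_1=s_3=t$, $c_2=s_1s_2=-s^2$, and $c_3=s_1s_2s_3=(s_1s_2)s_3=c_1c_2$. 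Thus passing from the full torus $\mathsf{T}$ to $\mathsf{T}_0$ is precisely the algebraic specialization $c_3=c_1c_2$ appearing in the statement.

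Next I would restrict the correspondence equation of Theorem~\ref{pppddd} to the 1-leg locus by setting $\lambda_2=\lambda_3=\emptyset$ and $\lambda_1=\lambda$, and then specialize the equivariant parameters to $\mathsf{T}_0$, i.e.\ substitute $c_3=c_1c_2$. The equation becomes
\[
\big\langle (\underline{\mathcal{T}}|_{c_3=c_1c_2})(\mu)(\mathsf{p})\,\big|\,\lambda\big\rangle^{\GW,\mathsf{T}_0}_{U,D}
=\big\langle \Hr^{\PT}_\mu(\mathsf{p})\,\big|\,\lambda\big\rangle^{\PT,\mathsf{T}_0}_{U,D}
\]
for all $\mu,\lambda$. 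Reading off matrix entries in the bases $\{\mathfrak{a}_\nu\}$ and $\{\Hr^{\PT}_\mu\}$, this is exactly the correspondence equation $\mathcal{M}^{\GW}_d\,(\underline{\mathcal{T}}|_{c_3=c_1c_2})_d=\mathcal{M}^{\DT}_d$. Since $\mathcal{T}$ solves the same equation — this is the content of Theorem~\ref{thm:one_leg} in the form~\eqref{yayay} used to set up Lemma~\ref{lem:uniq} — the uniqueness clause of Lemma~\ref{lem:uniq} forces $\mathcal{T}_d=(\underline{\mathcal{T}}|_{c_3=c_1c_2})_d$ for every $d$, hence $\mathcal{T}=\underline{\mathcal{T}}|_{c_3=c_1c_2}$. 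The polynomiality of the coefficients of $\mathcal{T}$ in $c_1,c_2$ then follows immediately: by Theorem~\ref{pppddd}(ii) the coefficients of $\underline{\mathcal{T}}$ are polynomials in $c_1,c_2,c_3$, and substituting the polynomial $c_1c_2$ for $c_3$ preserves polynomiality. This reconfirms Lemma~\ref{xxdd22}(i) by an independent route.

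The main point requiring care is the compatibility of the Gromov-Witten brackets on the two sides. The correspondence of Theorem~\ref{pppddd} is stated for the capped bracket of Section~\ref{ccppvv} carrying no collapsed components of genus $\ge 2$, whereas Lemma~\ref{lem:uniq} is phrased through the $\bullet$-bracket in which collapsed components of all genera are permitted. By remark~(i) following Theorem~\ref{thm:one_leg}, these two brackets agree on $\mathsf{T}_0$ precisely because the obstruction class $c_3(T_{U/D})-c_1(T_{U/D})c_2(T_{U/D})$ governing the collapsed genus $\ge 2$ contributions vanishes under $c_3=c_1c_2$ — the very same specialization. One should also confirm that the restriction introduces no denominators, but this is automatic: $\underline{\mathcal{T}}$ depends polynomially (not merely rationally) on $c_1,c_2,c_3$, so the substitution $c_3=c_1c_2$ is a well-defined evaluation and develops no singularity, matching the remark in Section~\ref{sec:non-equiv-limit}. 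I expect this bookkeeping of brackets and conventions, rather than any nontrivial computation, to be the only delicate step.
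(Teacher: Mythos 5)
Your proposal is correct and takes essentially the same route as the paper: the paper likewise obtains $\mathcal{T}=\underline{\mathcal{T}}|_{s_1=-s_2}$ from the uniqueness clause of Lemma~\ref{lem:uniq} and then deduces polynomiality in $c_1,c_2$ from Theorem~\ref{pppddd}(ii) together with the evaluation $c_1\mapsto t$, $c_2\mapsto -s^2$, $c_3\mapsto c_1c_2$ under the $\mathsf{T}_0$-specialization. Your bookkeeping of the two bracket conventions is exactly what the paper handles via remark (i) after Theorem~\ref{thm:one_leg} and equation~\eqref{yayay}, so nothing essential is missing.
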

\begin{proof}
  The uniqueness of Lemma \ref{lem:uniq} implies that 
$\mathcal{T}=\underline{\mathcal{T}}|_{s_1=-s_2}$. 
Hence, the coefficients of $\mathcal{T}$ are polynomials of \(s=s_1\) and \(t=s_3\).
 Since
  \[ c_1|_{s_1=-s_2}=t, \quad c_2|_{s_1=-s_2}=-s^2, \]
  and since $\underline{\mathcal{T}}$ is symmetric with respect to all
permutations of \(s_i\), the coefficients of $\mathcal{T}$ must be
 polynomial in \(c_1,c_2\).
\end{proof}

\subsection{Poles}

The following pole restriction result will play a crucial role
in the proof of Theorem \ref{thm:two_leg} in Section 
\ref{sec:two-leg-case}.

\begin{Lemma}\label{lem:poles}
  The descendent invariants 
  \[
\big\langle\tau_\mu\big|\lambda_1,\lambda_2,\lambda_3\big\rangle^{\GW,\mathsf{T}}_{U,D} \quad \text{and} \quad
\big\langle\mathrm{ch}_\mu\big|\lambda_1,\lambda_2,\lambda_3\big\rangle^{\PT,\mathsf{T}}_{U,D}
\] have no poles along the hyperplane \(s_i+s_j=0\) if either
$\lambda_i=\emptyset$ or $\lambda_j=\emptyset$.
\end{Lemma}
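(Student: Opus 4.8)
The plan is to prove the pole restriction by a degeneration/localization argument that reduces the claim to a geometry where the relevant leg has been stripped off, making the absence of a pole manifest. The key observation is that the statement is symmetric in the two theories (both Gromov-Witten and stable pairs), and in both cases the localization contributions organize according to the fixed loci of the $\mathsf{T}$-action on $U$.

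First I would set up the equivariant localization for the invariants $\langle\tau_\mu|\lambda_1,\lambda_2,\lambda_3\rangle^{\GW,\mathsf{T}}_{U,D}$ and $\langle\mathrm{ch}_\mu|\lambda_1,\lambda_2,\lambda_3\rangle^{\PT,\mathsf{T}}_{U,D}$. Recall that $U = \mathbf{P}^1\times\mathbf{P}^1\times\mathbf{P}^1$ with three lines through $\infty\times\infty\times\infty$ removed, the descendents are all inserted at the fixed point $\mathsf{p} = 0\times 0\times 0$, and the relative conditions $\lambda_i$ live on the divisors $D_i$. The hyperplane $s_i + s_j = 0$ corresponds to the specialization where the tangent weights along the $i$-th and $j$-th directions become opposite. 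Poles in the localization formula arise only from the virtual normal weights of the fixed loci; a pole along $s_i+s_j=0$ can only be produced by a fixed locus whose normal directions involve a factor $s_i + s_j$ in the denominator. The crucial point is that such factors come precisely from the edge/leg geometry in the $i$-th or $j$-th coordinate direction.

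Next, assume without loss of generality that $\lambda_j = \emptyset$, so there is no relative condition imposed along $D_j$. The heart of the argument is that, since the descendents sit at $\mathsf{p}$ (the origin away from $D_j$) and nothing is distributed to the divisor $D_j$, the curve classes and fixed loci contributing to the invariant can be chosen to avoid propagating along the $j$-th leg in a way that would create the factor $(s_i+s_j)$ in a denominator. More precisely, I would argue that the generating series factors through a vertex/edge decomposition (the capped vertex formalism reviewed in Section \ref{ccppvv}), and the only term that could carry a pole at $s_i + s_j = 0$ would be an edge contribution along a torus-fixed curve joining the two relevant directions; but with $\lambda_j=\emptyset$ the corresponding boundary weight does not appear, so no such denominator is generated. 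One can make this rigorous by observing that the full localization sum is, after clearing the expected denominators, a regular function in a neighborhood of $s_i+s_j=0$, because the relevant edge term that would contribute the offending pole has empty relative insertion and hence trivial contribution to the relevant weight.

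\emph{The main obstacle} I anticipate is controlling the poles coming from the \emph{descendent insertions} themselves rather than from the edge geometry: the classes $\mathrm{ch}_k(\mathsf{p})$ and $\tau_k(\mathsf{p})$ involve the equivariant class of the fixed point $\mathsf{p}$, whose localization contribution is $\prod(s_i)$-type weights, and iterated descendents can in principle accumulate denominators. I would handle this by noting that the \emph{product} $c_1 c_2 - c_3 = (s_1+s_2)(s_1+s_3)(s_2+s_3)$ restricted to $\mathsf{p}$ shows the relevant factors appear in the tangent/obstruction weights at $\mathsf{p}$ in the \emph{numerator} as well, and by the rigidity/properness of the non-relative directions the net contribution remains regular. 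The cleanest route is likely to invoke the structural results of \cite{MOOP,PPDC} on capped vertices: the capped descendent vertex with one empty leg is, by construction, a Laurent \emph{polynomial} (or at worst has poles only along the \emph{other} two hyperplanes), and the pole-freeness along $s_i+s_j=0$ when $\lambda_i$ or $\lambda_j$ is empty is precisely the degeneration statement that the empty leg contributes no edge and hence no normal weight divisible by $s_i+s_j$. Thus the real work is to translate the vanishing of the empty-leg edge contribution into the pole statement, and to verify that the descendent insertions at $\mathsf{p}$ do not reintroduce the pole — a check I would carry out directly on the localization weights at $\mathsf{p}$.
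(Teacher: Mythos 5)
The Gromov--Witten half of your argument is essentially right, though the mechanism is slightly misidentified. In the localization formula for the capped vertex (the decomposition into standard vertex, half-edge term, and rubber integral used in \cite{MOOP}), the half-edge and rubber terms are regular at $s_i+s_j=0$; the only source of such poles in the \emph{standard vertex} is the node-smoothing weight $\frac{s_i}{d_i}+\frac{s_j}{d_j}$ at a node of the fixed domain joining components mapping along the $i$-th and $j$-th coordinate lines --- not an ``edge contribution'' in the capped-vertex sense. Since $d_j\le|\lambda_j|$, the hypothesis $\lambda_j=\emptyset$ forces $d_j=0$ and kills that node; this is your ``no propagation along the $j$-th leg'' made precise. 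You also need to first pass from $\langle\cdot\rangle^{\GW,\mathsf{T}}$ to the series with no collapsed components, since the two differ by genus-$0$ and genus-$1$ collapsed contributions; the transition matrix must be checked to be pole-free (easy, but it has to be said). Your worry about the descendent insertions is harmless: at a fixed point they restrict to polynomials in $s_1,s_2,s_3$ and contribute no denominators at all.

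The genuine gap is on the stable pairs side. You treat the two theories as symmetric and assert that the empty leg ``contributes no edge and hence no normal weight divisible by $s_i+s_j$,'' and elsewhere that the capped vertex with an empty leg is pole-free ``by construction.'' Neither holds for $\PT$: the $\mathsf{T}$-fixed stable pairs for the configuration $(\lambda_1,\emptyset,\lambda_3)$ are module/box configurations supported on the union of the two \emph{remaining} cylinders, and the vertex weight $\mathsf{V}_Q$ --- built from terms such as $\mathsf{F}_0\overline{\mathsf{F}}_0(1-s_1)(1-s_2)(1-s_3)/(s_1s_2s_3)$ --- can a priori produce a pole at $s_1+s_2=0$ even though no boxes lie along the second leg. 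The absence of that leg does not make the offending factor visibly absent; one must compute the order of the pole, which equals the constant term of $\mathsf{V}_Q(x,x^{-1},s_3)$, and show it is non-positive. The paper does this by the redistribution/rim-hook analysis following \cite{MNOP2} and \cite{MO}, arriving at the identity that the constant term equals $-\frac{1}{2}\sum_{i,j}\big((a_{i,j}-a_{i+1,j})-(a_{i,j+1}-a_{i+1,j+1})\big)^2\le 0$. Without this computation the $\PT$ half of the lemma is unproven, and appealing to the structural results of \cite{MOOP,PPDC} is circular, since the pole restriction with one empty leg is precisely the input those arguments require at this point.
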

\begin{proof}
  Since the matrix \(\underline{\mathcal{R}}\) from the proof of Theorem~\ref{pppddd} has the required pole property, it is enough
  to prove the statement for the invariants  
    \[
\big\langle\tau_\mu\big|\lambda_1,\lambda_2,\lambda_3\big\rangle^{\GW,\mathsf{T},*}_{U,D} \quad \text{and} \quad
\big\langle\mathrm{ch}_\mu\big|\lambda_1,\lambda_2,\lambda_3\big\rangle^{\PT,\mathsf{T}}_{U,D}.
\]

  The invariants here are the capped vertices \cite{MOOP,PPDC}.
  The stated regularity property for Gromov-Witten
 invariants follows from the localization formula \cite{GP} 
for the capped vertex \cite[section 2]{MOOP}.
  As explained in \cite{MOOP},
  \begin{multline}\label{eq:loccap}
    \big\langle\tau_\mu\big|\lambda_1,\lambda_2,\lambda_3\big\rangle^{\GW,\mathsf{T},*}_{U,D}=\sum_{\lambda'_1,\lambda'_2,\lambda'_3} \mathsf{V}_{\GW}(\tau_\mu|\lambda'_1,\lambda'_2,\lambda'_3,u)
    \cdot H(\lambda'_k,s_{k+1},s_{k+2},s_k)\\
\cdot \prod_{k=1}^3\Psi_{\GW}(\lambda_k,\lambda'_k,s_{k+1},s_{k+2},-s_k,u)\, ,
\end{multline}
  where the partitions
in the sum are constrained by \(|\lambda'_i|=|\lambda_i|\), 
the half-edge term \(H\) is the edge-term for the local curve 
theory \cite{BP}, 
the term 
  \[\Psi_{\GW}(\lambda,\mu,s_1,s_2,s_3,u)=\sum_g\bigg\langle \lambda\bigg| \frac{1}{s_3-\psi_\infty}\bigg|\mu\bigg\rangle^{\sim \empty \prime}_{g,d}u^{2g-2}\, ,\]
is the rubber integral,
  and \(\mathsf{V}_{\GW}(\tau_\mu|\lambda_1,\lambda_2,\lambda_3,u)\) is the standard localization vertex \cite{GP} in Gromov-Witten theory.

  The rubber integral is regular at \(s_i+s_j=0\), the half-edge term is the ratio of the explicit products of the linear expressions of
  \(s_i\) which can be easily checked to be regular at \(s_i+s_j=0\).
The only potential source of poles at \(s_i+s_j=0\) is
 the standard localization vertex \(\mathsf{V}_{\GW}(\tau_\mu|\lambda_1,\lambda_2,\lambda_3)\).
 
 The standard Gromov-Witten localization vertex \(\mathsf{V}_\GW(\tau_\mu|\lambda_1,\lambda_2,\lambda_3)\) is straightforward to analyze directly
from the formula of \cite{GP}.
%
%
In fact, the only
  source of poles at \(s_i+s_j=0\) is the tangent weight 
of the tangent space
  the space of smoothing of a nodal rational curve
(which occurs in the Euler class of the virtual normal bundle
to the $\mathsf{T}$-fixed locus.
If we are smoothing
  a node connecting the rational components with \(\mathsf{T}\)-weights 
\(\frac{s_i}{d_i}\)
  and \(\frac{s_j}{d_j}\) at the node then the tangent space to the smoothing family is
  $$\frac{s_i}{d_i}+\frac{s_j}{d_j}\, .$$
  Since \(d_i\) and \(d_j\) are the degrees of the images of the corresponding
  rational components, 
we have \(d_i\le |\lambda_i|\) and \(d_j\le |\lambda_j|\).
  Thus the pole statement follows in the Gromov-Witten case
since at least one of $\lambda_i$ and $\lambda_j$ are assume to
be empty.

  The $\PT$ case is shown by a computation similar to
 \cite[Section 3.3]{MO} where the parallel
  $\mathsf{DT}$ statement is proven. In \cite{MOOP}, 
the formula for
  $$\big\langle\mathrm{ch}_\mu\big|\lambda_1,\lambda_2,\lambda_3\big\rangle^{\PT,\mathsf{T}}_{U,D}$$
 analogous to (\ref{eq:loccap}) is written.
  It immediately follows that the only possible source of poles at \(s_i+s_j=0\) is the standard localized vertex
  \(\mathsf{V}_{\PT}(\tau_\mu|\lambda_1,\lambda_2,\lambda_3)\) for
  $\PT$ theory \cite{pt2}.
Thus, we must analyze the poles of 
$$\mathsf{V}_\PT(\mathrm{ch}_\mu|\lambda_1,\emptyset,\lambda_3)$$
 along \(s_1+s_2=0\). We will use
  the rim-hook  technique of \cite{MNOP1}.

Let us recall the basic structure of the standard
$\PT$ localization vertex
  from \cite{pt2}. 
  To  a partition \(\lambda_i\), we attach a monomial ideal \(\lambda_i[x_{i-1},x_{i+1}]\subset\CC[x_{i-1},x_{i+1}]\) and \(\CC[x_1,x_2,x_3]\)-modules
  \[M_i=\CC[x_{i},x_{i}^{-1}]\otimes \frac{\CC[x_{i-1},x_{i+1}]}{\lambda_i[x_{i-1},x_{i+1}]},\quad \quad M=\bigoplus_{i=1}^3 M_i\, .\]
The \(\mathsf{T}\)-fixed points of
the moduli space of stable pairs \(P_\bullet(U/D)_{\lambda_1,\lambda_2,\lambda_3}\)
  correspond to finitely generated \(\mathsf{T}\)-invariant \(\CC[x_1,x_2,x_3]\)-submodules:
  \[Q\subset M/\langle (1,1,1)\rangle\, .\]
In the case at hand, \(\lambda_2=\emptyset\), so 
a\cite{pt2} 
the \(\mathsf{T}\)-invariant submodules \(Q\) as above
  form 0-dimensional families \cite{pt2}.
We can choose a monomial basis for each such \(Q\). 
The combinatorics of the \(\mathsf{T}\)-weights
  of a monomial basis of \(Q\) is discussed below.

  The \(\mathsf{T}\)-weights of the homogeneous monomials inside \(M_i\) form an infinite cylinder $$\mathrm{Cyl}_i\subset\ZZ^3\, .$$
  Since \(\lambda_2=\emptyset\), the cylinder \(\mathrm{Cyl}_2\) is empty.
Hence, the weights of \(Q\) form some subset of
  the union \(\mathrm{Cyl}_1\cup\mathrm{Cyl}_3\). The union has three types of weights:
  \[\mathrm{Cyl}_1\cup \mathrm{Cyl}_3=\textup{\uppercase\expandafter{\romannumeral 1}}^+\cup \textup{\uppercase\expandafter{\romannumeral 2}}\cup \textup{\uppercase\expandafter{\romannumeral 1}}^-,\]
  where \(\textup{\uppercase\expandafter{\romannumeral 2}}=\mathrm{Cyl}_1\cap\mathrm{Cyl}_3\),
  \(\textup{\uppercase\expandafter{\romannumeral 1}}^+\) consists of the weights that have only non-negative coordinates and lie in exactly
  one cylinder,  and
\(\textup{\uppercase\expandafter{\romannumeral 1}}^-\) are the rest of the weights.

  The submodule \(Q\) is uniquely characterized by the
associated set of weights \(\mathsf{wt}(Q)\). Conversely, 
a subset \(S\subset\ZZ^3\) is
  a set of weights of \(Q\) corresponding to a \(\mathsf{T}\)-invariant element of \(P_n(X/D)_{\lambda_1,\emptyset,\lambda_3}\) if and only
  if the following three conditions holds:
  \begin{enumerate}
  \item[(i)] \(S\subset \textup{\uppercase\expandafter{\romannumeral 1}}^-\cup \textup{\uppercase\expandafter{\romannumeral 2}}\)
  \item[(ii)] \(w\in S\) if any of the weights
    \[(w_1-1,w_2,w_3),\quad (w_1,w_2-1,w_3),\quad (w_1,w_2,w_3-1)\]
    are in \(S\).
  \item[(iii)] \(|S|=n\).
    
  \end{enumerate}

  Let us call the set of weights as above {\it geometric. }
  For given a geometric  set of weights \(Q\) we introduce the generating functions:
  \[\mathsf{F}_0(Q)=\sum_{(ijk)\in Q} s_1^is_2^js_3^k+\sum_{(ijk)\in \textup{\uppercase\expandafter{\romannumeral 1}}^+}s_1^is_2^js_3^k,\]
  \[\mathsf{F}_{12}(Q)=\sum_{(ij)\in \lambda_3}s_1^is_2^j,\quad \mathsf{F}_{23}(Q)=\sum_{(ij)\in \lambda_1} s_2^is_3^j.\]
In \cite{pt2}, the generating function of the redistributed virtual weights of the normal bundle to
  the corresponding \(\mathsf{T}\)-fixed point of \(P_n(U/D)_{\lambda_1,\emptyset,\lambda_3}\) is defined by:
  \[\mathsf{V}_Q=\mathsf{F}_0-\frac{\overline{\mathsf{F}}_0}{s_1s_2s_3}+\mathsf{F}_0
    \overline{\mathsf{F}}_0\frac{(1-s_1)(1-s_2)(1-s_3)}{s_1s_2s_3}
    +\frac{\mathsf{G}_{12}}{1-s_3}+\frac{\mathsf{G}_{23}}{1-s_1},\]
  where \(\overline{f}(s_1,s_2,s_3)=f(s_1^{-1},s_2^{-1},s_3^{-1})\), \(\mathsf{F}_0=\mathsf{F}_0(Q)\), and
  \[\mathsf{G}_{ij}=-\mathsf{F}_{ij}-\frac{\overline{\mathsf{F}}_{ij}}{s_is_j}+\mathsf{F}_{ij}\overline{\mathsf{F}}_{ij}\frac{(1-s_i)(1-s_j)}{s_is_j},\]
  with \(\mathsf{F}_{ij}=\mathsf{F}_{ij}(Q)\).
  
  The standard localized vertex \(\mathsf{V}_{\PT}(\mathrm{ch}_\mu|\lambda_1,\emptyset,\lambda_3)\) is the sum over all geometric
  sets of weights \(Q\) of the expressions:
  \[ q^{|Q|} \prod_i \mathrm{ch}_{\mu_i}(\mathsf{F}_0(Q))\cdot e(-\mathsf{V}_Q).\]

  To prove the Lemma,  we must analyze the 
poles of \(e(-\mathsf{V}_Q)\), and we follow method of \cite{MNOP2} in 
our argument.
  The order of the pole at \(s_1+s_2=0\) is equal to  the constant term of \(\mathsf{V}_Q(x,x^{-1},t_3)\). 
  Substituting{\footnote{For shorter
formulas, we now drop \(Q\) from the notation.}}
  \[\mathsf{F}_0=\underline{\mathsf{F}}_0+\frac{\mathsf{F}_{23}}{1-s_1}\]
  into the the formula for \(\mathsf{V}\), we obtain
  \begin{multline}\label{eq:normV}
    \underline{\mathsf{F}}_0-\frac{\underline{\overline{\mathsf{F}}}_0}{s_1s_2s_3}+\underline{\mathsf{F}}_0
    \underline{\overline{\mathsf{F}}}_0\frac{(1-s_1)(1-s_2)(1-s_3)}{s_1s_2s_3}
    +\frac{\mathsf{G}_{12}}{1-s_3}+\\
    \underline{\overline{\mathsf{F}}}_0\mathsf{F}_{23}\frac{(1-s_2)(1-s_3)}{s_1s_2s_3}-
    \underline{\mathsf{F}}_0\overline{\mathsf{F}}_{23}\frac{(1-s_2)(1-s_3)}{s_2s_3}\, .
    \end{multline}

    Since \(\underline{\mathsf{F}}_0(x,x^{-1},s_3)\) has only strictly positive powers of \(x\) in its expansion
    and \(\mathsf{F}_{23}(x,s_3)\) has only positive powers of \(x\) in its expansion, we conclude that the functions
    \[\underline{\mathsf{F}}_0(x^{-1},x,s^{-1}_3)\mathsf{F}_{23}(x^{-1},s_3)\frac{(1-x^{-1})(1-s_3)}{s_3},\quad
      \underline{\mathsf{F}}_0(x,x^{-1},s_3)\mathsf{F}_{23}(x,s_3^{-1})\frac{(1-x^{-1})(1-s_3)}{x^{-1}s_3}\]
    have only strictly negative and strictly positive, respectively, powers of \(x\) in their expansions. The expression
    \(\mathsf{G}_{12}\) is the generating function for the tangent weights
    \(\mathsf{Hilb}_{|\lambda_3|}(\CC^2)\) at the corresponding monomial ideal, hence
    we can use well-known formula for the tangent weights to see that
    \(\mathsf{G}_{12}(x,x^{-1})\) has no constant term.

    To finish proof we must bound the constant term of 
    \begin{equation}\label{eq:CT}
    \underline{\mathsf{F}}_0(x,x^{-1},s_3)-\frac{\underline{\mathsf{F}}_0(x^{-1},x,s^{-1}_3)}{s_3}+\underline{\mathsf{F}}_0(x,x^{-1},s_3)
    \underline{\mathsf{F}}_0(x^{-1},x,s_3^{-1})\frac{(1-x)(1-x^{-1})(1-s_3)}{s_3}
  \end{equation}
  The function \(\underline{\mathsf{F}}_0(x,x^{-1},s_3)\) can be expanded in Laurent power series
  of \(s_3\),  and the coefficients of the expansion are Laurent polynomials in \(x\):
  \[\underline{\mathsf{F}}_0(x,x^{-1},s_3)=\sum_{i,j}a_{ij}x^is_3^j\, .\]
 The  formal computation of the proof of \cite[Lemma 5]{MNOP2}
  determines the constant term of \eqref{eq:CT} to be
  \[-\frac{1}{2}\sum_{i,j}\big((a_{i,j}-a_{i+1,j})-(a_{i,j+1}-a_{i+1,j+1})\big)^2\, ,\]
which is non-positive. \end{proof}

\subsection{Proof of Theorem \ref{thm:two_leg}}
\label{sec:two-leg-case}

By Theorem \ref{pppddd}, we have the correspondence
\begin{equation}\label{lldd4}
\big \langle \underline{\mathcal{T}}(\mu)(\mathsf{p})
\big|\lambda_1,\lambda_2,
\emptyset \big\rangle_{U,D}^{\GW,\mathsf{T}}
=
\big \langle \Hr^{\PT}_\mu(\mathsf{p})\big |\lambda_1,\lambda_2,\emptyset \big \rangle_{U,D}^{\PT,\mathsf{T}}\, .
\end{equation}
Theorem \ref{thm:two_leg} will be derived from 
equation \eqref{lldd4}.

The element \(\widehat{\Hr}^{\GW}_\mu(\mathsf{p})\) is a linear combination of monomials of \(\mathfrak{a}_i(\mathsf{p})\) with coefficients in
\(\CC[c_1,c_2]\). The descendents \(\Hr^{\PT}_\mu(\mathsf{p})\)
are the linear combinations of monomials of \(\text{ch}_i(\mathsf{p})\)
with coefficients in \(\CC[c_2]\).
Lemma \ref{lem:poles} therefore implies that for every \(\lambda_1,\lambda_2\),
 the specializations
\begin{equation}\label{eq:spec}
\big\langle {\widehat{\Hr}^{\GW}_\mu}(\mathsf{p})\big|\lambda_1,\lambda_2,\emptyset\big\rangle^{\GW,\mathsf{T}}_{U,D}\Big|_{s_3=-s_i} \quad \text{and} \quad
\big\langle \Hr^\PT_{\mu}(\mathsf{p})\big|\lambda_1,\lambda_2,\emptyset\big\rangle^{\GW,\mathsf{T}}_{U,D}\Big|_{s_3=-s_i}
\end{equation}
are well defined for both $i=1$ and $i=2$.

Consider first the $i=1$ case.
By Corollary \ref{coco9}, we have
$$
\mathcal{T} =
\underline{\mathcal{T}}|_{s_3=-s_1}\, $$
since the specialization $s_3=-s_1$ implies $c_3=c_1c_2$.
From \eqref{lldd4}, we conclude
$$ \Big\langle\, \widehat{\Hr}^{\mathsf{GW}}_{\mu}(\mathsf{p})\, \Big|\, \lambda_1,\lambda_2,\emptyset\, 
\Big\rangle^{\mathsf{GW},\mathsf{T}}_{U,D}=
q^{-|\lambda_1|-|\lambda_2|}\Big\langle\, \Hr^{\PT}_{\mu}(\mathsf{p})\, \Big|\, 
\lambda_1,\lambda_2,\emptyset\, \Big \rangle^{\PT,\mathsf{T}}_{U,D}\mod (s_1+s_3)\, .$$
By considering the $i=2$ case, we obtain the above equality
mod $(s_2+s_3)$ also. \qed

\subsection{Proof of Theorem \ref{thm:noneq}}
\label{sec:proof-theor-refthm}

Theorem \ref{thm:noneq}
follows almost immediately from the following reformulation of \cite[Theorem 7]{PPDC}.
We define
\[\widetilde{\underline{\Hr}}_\mu=\frac{1}{(c_3)^{l-1}}\sum_{\mbox{set partitions
$P$ of}\{1,\dots,l\}}(-1)^{|P|-1}(|P|-1)!\prod_{S\in P}
\underline{\mathcal{T}}(\Hr^{\PT}_{\mu_S})\, .\]
For classes \(\gamma_i\in H^*(X)\)  and a vector \(\vec{k}\) of
non-negative integers, we define 
\[\overline{\underline{\Hr}_{k_1}(\gamma_1)\dots
\underline{\Hr}_{k_l}(\gamma_l)}=\sum_{\mbox{set partitions $P$ of} \{1,\dots,l\}}\prod_{S\in P}\widetilde{\underline{\Hr}}_{\vec{k}_S}(\gamma_S),\]
where \(\gamma_S=\prod_{i\in S}\gamma_i\).

The  argument from the section 7.4 of \cite{PPDC} implies

\begin{theorem}\cite{PPDC}
Let    \(X\) be a nonsingular projective toric 3-fold,  and let
 \(\gamma_i\in H^*(X,\CC)\). After the change of variables
$-q=e^{iu}$, we have
  \[
\big\langle\overline{ \underline{\Hr}_{k_1}(\gamma_1)\dots \underline{\Hr}_{k_l}(\gamma_l)} \big \rangle^{\GW}_\beta
=
\big\langle \Hr^{\PT}_{k_1}(\gamma_1)\dots \Hr^{\PT}_{k_l}(\gamma_l)\big  \rangle^{\PT}_\beta
  \, ,\]
where the non-equivariant limit is taken on both sides.
\end{theorem}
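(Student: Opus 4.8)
The plan is to obtain the statement as a translation of \cite[Theorem 7]{PPDC} into the present notation; the substantive geometric input---$\mathsf{T}$-equivariant localization on the toric $3$-fold together with the regularity of the resulting non-equivariant limit---has already been carried out in \cite[Section 7.4]{PPDC}, so the remaining task is organizational. First I would record that each $\mathfrak{a}_k$ and each $\Hr^{\PT}_k$ is, by construction, a $\CC[c_1,c_2,c_3]$-linear combination of the standard descendents $\{\tau_m\}_{m\le k}$ and $\{\ch_m\}_{m\le k}$. Consequently the universal descendent transform of \cite{PPDC} restricts to the operator $\underline{\mathcal{T}}$ of Theorem~\ref{pppddd}, and the equivariant capped-vertex correspondence
$$\big\langle \underline{\mathcal{T}}(\mu)(\mathsf{p})\,\big|\,\lambda_1,\lambda_2,\lambda_3\big\rangle^{\GW,\mathsf{T}}_{U,D}=\big\langle \Hr^{\PT}_\mu(\mathsf{p})\,\big|\,\lambda_1,\lambda_2,\lambda_3\big\rangle^{\PT,\mathsf{T}}_{U,D}$$
holds at every $\mathsf{T}$-fixed vertex of $X$.

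Next I would assemble the global invariants by the toric gluing formalism. For a nonsingular projective toric $3$-fold, both sides of the asserted identity are computed by $\mathsf{T}$-equivariant localization as sums over the combinatorial data of $\mathsf{T}$-fixed lines and $\mathsf{T}$-fixed points, the descendent classes $\gamma_i$ being expanded in the fixed-point basis and thereby distributed among the vertices. Applying the vertex correspondence above at each vertex converts the stable-pairs edge-vertex sum into its Gromov-Witten counterpart. The crucial structural observation is that a distribution of the $l$ insertions among the vertices is the same datum as a set partition $P$ of $\{1,\dots,l\}$: a block $S\in P$ records the insertions sharing a common vertex, where they combine into the product class $\gamma_S=\prod_{i\in S}\gamma_i$ and into the single operator $\underline{\mathcal{T}}(\Hr^{\PT}_{\mu_S})$.

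The two nested set-partition sums preceding the theorem are exactly the bookkeeping that turns this reorganization into a clean identity. The passage $\Hr^{\PT}_{\mu_S}\mapsto\widetilde{\underline{\Hr}}_{\mu_S}$, through the alternating sum $\sum_P(-1)^{|P|-1}(|P|-1)!$ and the normalization by $(c_3)^{-(|S|-1)}$, is the inclusion-exclusion isolating the \emph{connected} vertex contribution; the factor $(c_3)^{|S|-1}$ that it removes is the repeated tangent Euler class produced when $|S|$ insertions collide at one fixed point, reflecting $\mathsf{p}^{|S|}=c_3^{\,|S|-1}\mathsf{p}$. The outer sum $\overline{\underline{\Hr}_{k_1}(\gamma_1)\dots\underline{\Hr}_{k_l}(\gamma_l)}=\sum_P\prod_{S\in P}\widetilde{\underline{\Hr}}_{\vec{k}_S}(\gamma_S)$ then reassembles the connected pieces into the full disconnected correspondence, matching term-by-term the vertex distribution described above. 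Establishing this combinatorial identity is precisely the translation of \cite[Theorem 7]{PPDC} into our operators.

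The one genuinely nontrivial point---and the main obstacle---is that the individual vertex summands carry poles along $s_i+s_j=0$ and, after the $(c_3)^{-(l-1)}$ normalization, along $c_3=0$, whereas each side of the identity is manifestly non-equivariant once $X$ is compact. The resolution is precisely the content of \cite[Section 7.4]{PPDC}: the connected combination $\widetilde{\underline{\Hr}}$ is the unique arrangement for which the equivariant parameters cancel, so that the non-equivariant limit exists on each side and the two limits agree. This regularity in $c_3$ is what renders $\widetilde{\underline{\Hr}}_\mu$ well-defined, and under the specialization $c_3=c_1c_2$ it yields the polynomiality of $\widetilde{\Hr}_{\vec{k}}$ in $c_1,c_2$ needed for Theorem~\ref{thm:noneq}. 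Invoking that argument verbatim, now with $\underline{\mathcal{T}}$ and $\Hr^{\PT}$ substituted for the standard descendent transform of \cite{PPDC}, completes the proof.
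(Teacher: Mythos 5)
Your proposal is correct and takes essentially the same route as the paper: the paper's entire proof of this statement consists of the single remark that the argument of \cite[Section 7.4]{PPDC} applies once $\mathfrak{a}_k$ and $\Hr^{\PT}_k$ are recognized as linear combinations of the standard descendents, which is exactly your reduction to \cite[Theorem 7]{PPDC}. The localization and inclusion--exclusion narrative you add is a reasonable gloss on what that cited argument contains (in \cite{PPDC} the set partitions index diagonal classes in the correspondence rather than literal vertex assignments, but this does not affect the reduction), not a deviation from the paper's approach.
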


Theorem \ref{thm:noneq} follows because $$\widetilde{\Hr}_\mu=
\widetilde{\underline{\Hr}}_\mu|_{c_3=c_1c_2}\,  $$
and the restriction $c_3=c_1c_2$ does not affect the non-equivariant
limit if all $\gamma_i$ have positive cohomological degree. \qed

\subsection{Examples for \(X=\mathbf{P}^3\)} \label{sec:examples}
 The prefactor in front of \(\sum_{k=0}^\infty x^k\ch_k(\mathbb{F}) \) in the
definition of \(\Hr^{\PT}(x)\) in Section \ref{vvtt3} has an expansion
that starts as:
\[
\mathcal{S}\left(\frac{x}{\theta}\right)=
1-\frac{c_2}{24}x^2+\frac{c_2^2}{1920}x^4-\frac{c_2^3}{322560}x^6+\dots.\]
In particular, the non-equivariant limit of \(\Hr_k^{\PT}(\gamma)\) is equal to
\[\text{ch}_{k+1}(\gamma)-\frac{1}{24}\text{ch}_{k-1}(\gamma\cdot c_2)\, .\]

On the Gromov-Witten side of the correspondence, we have
\begin{eqnarray*}
\big \langle \Hr_1^{\GW}(\gamma)\Phi\big\rangle&=&
\big\langle\mathfrak{a}_1(\gamma)\Phi\big\rangle\, ,\\
 \big\langle \Hr_2^{\GW}(\gamma)\Phi\big\rangle&=&\frac12\big\langle \mathfrak{a}_2(\gamma)\Phi\big\rangle\, ,\\
\big\langle \Hr_3^{\GW}(\gamma)\Phi\big\rangle&=&\frac16\big\langle \mathfrak{a}_3(\gamma)\Phi\big\rangle+\frac{1}{24u^2}\big\langle c_1^2c_2\cdot\Phi\big\rangle\, , \\
\big\langle \Hr_4^{\GW}(\gamma)\Phi\big\rangle&=&
  \frac1{24}\big\langle \mathfrak{a}_4(\gamma)\Phi\big\rangle-\frac{i}{12 u}
\big\langle\mathfrak{a}_1^2(c_1\cdot\gamma)\Phi\big\rangle
-\frac{5i}{144u^3}\big\langle c_1^3c_2\cdot\Phi\big\rangle\, ,\\
\big \langle\Hr_5^{\GW}(\gamma)\Phi\big\rangle&=&\frac1{120}\big\langle\mathfrak{a}_5(\gamma)\Phi\big\rangle-\frac{i}{24u}\big\langle\mathfrak{a}_1\mathfrak{a}_2(c_1\cdot \gamma)\Phi\big\rangle
  -\frac{1}{48u^2}\big\langle\mathfrak{a}^2_1(c_1^2\cdot\gamma)\Phi\big\rangle\\
 & & +\frac{1}{24u^2}\big\langle\mathfrak{a}_1(c_1^2c_2\cdot\gamma)\Phi\big\rangle-\frac{1}{64u^4}\big\langle c_1^4c_2\cdot \Phi\big\rangle\, .
\end{eqnarray*}
The operators \(\mathfrak{a}_k\) are expressed in terms of standard descendents by inverting \eqref{ff556}:
\begin{eqnarray}\label{eq:aak}
  \mathfrak{a}_1&=&\tau_0-c_2/24\,, \\ \nonumber
\frac{iu}{2}\mathfrak{a}_2&=&\tau_1+c_1\cdot\tau_0\, ,\\ \nonumber
 -\frac{u^2}{3}\mathfrak{a}_3&=&2\tau_2+3c_1\cdot\tau_1+c_1^2\cdot\tau_0\, ,\\ \nonumber
-\frac{iu^3}{4}\mathfrak{a}_4&=&6\tau_3+11c_1\cdot\tau_2+6c_1^2\tau_1+c_1^3\cdot\tau_0\,,
\\ \nonumber
 \frac{u^4}{5}\mathfrak{a}_5&=&24\tau_4+50c_1\cdot\tau_3+35c_1^2\cdot\tau_2+10c_1^3\cdot\tau_1+c_1^4\cdot\tau_0\, .
\end{eqnarray}

In particular, the $\GW/\PT$ correspondence of Theorem \ref{thm:noneq}  gives the following 
relations for the degree 1 invariants of \(\mathbf{P}^3\):
\begin{eqnarray}\label{eq:ch5L}
  i q^{-2}\big\langle \text{ch}_5(\mathsf{L})\big\rangle^\PT_1 &=&
    \frac1{u^3}\big\langle\tau_3(\mathsf{L})\big\rangle^\GW_1+
\frac{22}{3u^3}\big\langle\tau_2(\mathsf{p})\big\rangle^\GW_1
-\frac1{3u}\big\langle\tau_0\tau_0(\mathsf{p})\big\rangle^\GW_1\, ,\\ \nonumber
  -q^{-2}\big\langle \text{ch}_6(\mathsf{H})\big\rangle^\PT_1+
-\frac{q^{-2}}{4}\big\langle \text{ch}_4(\mathsf{p})\big\rangle^\PT_1 &=&
 \frac{1}{u^4}\big\langle\tau_4(\mathsf{H})\big\rangle^\GW_1
+\frac{25}{3u^4}\big\langle\tau_3(\mathsf{L})\big\rangle^\GW_1+
    \frac{70}{3u^4}\big\langle\tau_2(\mathsf{p})\big\rangle^\GW_1 \\ \nonumber
& &      -\frac{1}{3u^2}\big\langle\tau_0\tau_1(\mathsf{L})\big\rangle^\GW_1
+\frac{5}{3u^2}\big\langle\tau_0\tau_0(\mathsf{p})\big\rangle^\GW_1\, .
  \end{eqnarray}
Here, and below in Section \ref{sec:concluding-remarks},  
$$\mathsf{p}\,, \  \mathsf{L}\, ,  \ \mathsf{H} \ \in H^*(\mathbf{P}^3)$$
are respectively the classes of a point,  a line, and a plane.
These formulas can verified  numerically up to \(u^8\) with the help of Gathmann's Gromov-Witten code and previously known complete
calculations on the stable pairs
side \cite{P}.

\section{Concluding remarks: ${\mathsf{DT}}/\PT/\GW$}
\label{sec:concluding-remarks}

\subsection{Stationary ${\mathsf{DT}}/\PT$ correspondence}
\label{sec:stat-dtpt-corr}
The moduli space  \(I_n(X,\beta)\) parameterizes flat families of ideal sheaves \(\mathcal{I}\subset \mathcal{O}_X\) with
 \[\chi(\mathcal{I})=n,\quad [{\text{Supp}}(\mathcal{O}_X/\mathcal{I})]=\beta\in H_2(X,\ZZ)\, .\]
There is a universal quotient sheaf \(\mathbb{F}_n\) over \(X\times I_n(X,\beta)\) with fibers 
$$\mathbb{F}_n|_{\mathcal{I}\times X}=\mathcal{O}_X/\mathcal{I}\, .$$ We define
\[\ch_k(\gamma)=\pi_*\left(\ch_k(\mathbb{F})\cdot \gamma\right)
  \in \bigoplus_{n\in \mathbb{Z}} H^*(I_n(X,\beta))\ \ \ \text{for}\ \ \  \gamma\in H^*(X)\, .\]

The moduli space \(I_n(X,\beta)\) has a natural virtual cycle 
\([I_n(X,\beta)]^{vir}\). The integrals of the above descendents classes 
define generating series,
\[\big\langle \mathrm{ch}_{k_1}(\gamma_1)\dots\mathrm{ch}_{k_m}(\gamma_m)\big\rangle_\beta^{\mathsf{DT}}=\sum_{n\in \mathbb{Z}}q^n\int_{[I_n(X,\beta)]^{vir}}\mathrm{ch}_{k_1}(\gamma_1)\dots\mathrm{ch}_{k_m}(\gamma_m)\, ,\]
just as for stable pairs. 
The normalized generating series of ${\mathsf{DT}}$ invariants have better
properties:
\begin{equation}
\label{p99p}
\big\langle \mathrm{ch}_{k_1}(\gamma_1)\dots\mathrm{ch}_{k_m}(\gamma_m)\big\rangle_\beta^{\mathsf{DT}'}=\big\langle\mathrm{ch}_{k_1}(\gamma_1)\dots\mathrm{ch}_{k_m}(\gamma_m)\big\rangle_\beta^{\mathsf{DT}}/\langle 1\rangle_0^{\mathsf{DT}}\, . \end{equation}
We refer the reader to \cite{MNOP1,MNOP2} for a
more detailed introduction.

The argument of \cite{PPDC} is valid if we replace the Gromov-Witten
side by the ${\mathsf{DT}}$ theory of ideal sheaves.
Since the 1-leg invariants in ${\mathsf{DT}}$ and $\PT$ theories are identical modulo \(s_1+s_2\), our proof of Theorem \ref{thm:noneq} can be repeated to obtain the following non-equivariant
result.
\begin{theorem}
Let \(X\) be  a nonsingular projective toric 3-fold, and
let \(\gamma_i\in H^{\geq 2}(X,\mathbb{C})\). The 
stationary descendent
${\mathsf{DT}}/\PT$ correspondence
holds: 
  \[
\big\langle \mathrm{ch}_{k_1}(\gamma_1)\dots \mathrm{ch}_{k_l}(\gamma_l)\big\rangle^{\mathsf{DT}'}_{\beta}=
\big\langle \mathrm{ch}_{k_1}(\gamma_1)\dots \mathrm{ch}_{k_l}(\gamma_l)\big  \rangle^{\PT}_{\beta}\, .
  \]
\end{theorem}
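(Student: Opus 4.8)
The plan is to repeat the proof of Theorem~\ref{thm:noneq} essentially verbatim, with the Gromov--Witten theory on the left replaced by the $\mathsf{DT}$ theory of ideal sheaves, as announced in the paragraph preceding the statement. The essential simplification relative to the $\GW/\PT$ case is that the classes $\mathrm{ch}_k(\gamma)$ are defined by the identical formula on both sides, so the target correspondence transformation ought to be the identity; the content of the theorem is that the equivariant $\mathsf{DT}/\PT$ descendent transformation produced by the machine of \cite{PPDC} degenerates to the identity in the stationary non-equivariant limit. First I would invoke that machine with $\mathsf{DT}$ in place of $\GW$: it supplies an invertible $\CC[s_1,s_2,s_3]$-linear transformation $\underline{\mathcal{T}}^{\mathsf{DT}}\colon\mathcal{P}^{\PT}\to\mathcal{P}^{\mathsf{DT}}$ (with $\CC[c_1,c_2,c_3]$-basis $\mathrm{ch}_\mu$ on the target), with coefficients polynomial in $c_1,c_2,c_3$ and preserving the augmented-size filtration, intertwining the capped $\PT$ and normalized $\mathsf{DT}'$ descendent vertices exactly as in Theorem~\ref{pppddd}.

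The key step is to show $\underline{\mathcal{T}}^{\mathsf{DT}}|_{c_3=c_1c_2}=\mathrm{Id}$, and for this I would reprove the $\mathsf{DT}$ analogue of the uniqueness argument of Lemma~\ref{lem:uniq}. The crucial local input is that the $1$-leg capped $\mathsf{DT}'$ and $\PT$ descendent vertices coincide in the $\mathsf{T}_0$-theory, i.e. modulo $s_1+s_2$. This is the reduction already carried out in Proposition~\ref{prp:evPT}: on the symplectic subtorus both theories are computed on $\mathsf{Hilb}_n(\CC^2)$ through the same Nakajima operator $\pi_*\mathrm{ch}(\mathcal{I})$ of the universal ideal sheaf, the $\PT$ contributions with $n>d$ vanishing by \cite{MPT} while the excess $\mathsf{DT}$ contributions assemble into $\langle 1\rangle_0^{\mathsf{DT}}$ and are removed by the $\mathsf{DT}'$ normalization. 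Hence the $1$-leg matrices $\mathcal{M}^{\mathsf{DT}}_d$ and $\mathcal{M}^{\PT}_d$ agree modulo $s_1+s_2$, and since $\mathcal{M}^{\PT}_d$ is injective by the shifted-symmetric-function (Fourier transform on $S_\infty$) argument of Lemma~\ref{lem:uniq}, the unique transformation solving the $1$-leg correspondence equation is the identity there; that is, $\underline{\mathcal{T}}^{\mathsf{DT}}|_{c_3=c_1c_2}=\mathrm{Id}$.

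To globalize I would use the pole restriction together with the degeneration and gluing formalism of Section~7.4 of \cite{PPDC}. The $\mathsf{DT}$ analogue of Lemma~\ref{lem:poles}, namely regularity of $\langle\mathrm{ch}_\mu\mid\lambda_1,\lambda_2,\lambda_3\rangle^{\mathsf{DT},\mathsf{T}}_{U,D}$ along $s_i+s_j=0$ whenever $\lambda_i=\emptyset$ or $\lambda_j=\emptyset$, is precisely the statement proven by the rim-hook method in \cite[Section~3.3]{MO}, so both vertices may be restricted to $s_3=-s_1$ and to $s_3=-s_2$ as in the proof of Theorem~\ref{thm:two_leg}. Running the gluing argument of \cite{PPDC} with $\mathsf{DT}$ replacing $\GW$ assembles the global equivariant $\mathsf{DT}'/\PT$ descendent correspondence of the toric $3$-fold $X$ governed by $\underline{\mathcal{T}}^{\mathsf{DT}}$. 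Passing to the non-equivariant stationary limit, the restriction $c_3=c_1c_2$ turns $\underline{\mathcal{T}}^{\mathsf{DT}}$ into the identity by the previous paragraph, and, exactly as at the end of the proof of Theorem~\ref{thm:noneq}, this restriction introduces no singularities because every $\gamma_i$ has positive cohomological degree; the set-partition renormalization then collapses by M\"obius inversion to the plain products $\prod_i\mathrm{ch}_{k_i}(\gamma_i)$ on each side, which is the asserted identity.

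The main obstacle is not a single hard computation but the bookkeeping surrounding the $\mathsf{DT}'$ normalization: one must check carefully that dividing by $\langle 1\rangle_0^{\mathsf{DT}}$ genuinely cancels the floating/embedded-point contributions that separate $\mathsf{DT}$ from $\PT$, so that the normalized capped $\mathsf{DT}'$ vertex really does match the $\PT$ vertex modulo $s_1+s_2$, and that this capped vertex retains the rationality and triangularity required to feed the uniqueness and gluing machinery of \cite{PPDC}. Both the $\mathsf{DT}$ pole restriction and the compatibility of the $\mathsf{DT}$ capped vertex with that machinery are available from \cite{MO} and \cite{PPDC}, so no input beyond these references is expected once the normalization is tracked correctly.
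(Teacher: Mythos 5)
Your proposal is correct and follows essentially the same route as the paper, which proves this theorem in two sentences by observing that the argument of \cite{PPDC} and the proof of Theorem~\ref{thm:noneq} go through verbatim with $\mathsf{DT}$ in place of $\GW$, the key input being that the $1$-leg $\mathsf{DT}'$ and $\PT$ invariants agree modulo $s_1+s_2$. Your expansion of the uniqueness, pole-restriction, and M\"obius-inversion steps is a faithful (and more detailed) rendering of exactly the intended argument.
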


Finding a relation between   ${\mathsf{DT}}$ and $\PT$ 
theories which includes the descendents of the 
identity class \(1\) is more subtle. 
A basic source of difficulty is that the
$\mathsf{DT}$ generating series \eqref{p99p} are {\em not} always
rational functions. We expect
 the non-equivariant ${\mathsf{DT}}$ descendent series
 to depend upon the function \(F_3\),
$$F_3(q)=\sum_{i=1}^\infty n^2\frac{q^n}{1-q^n}\, , $$
 which arises as
the logarithmic derivative of the McMahon function.


\begin{conjecture}\label{conjDT} Let $X$ be a nonsingular projective 3-fold.
For $\gamma_i\in H^*(X,\mathbb{C})$,
the series 
$$ \big\langle \mathrm{ch}_{k_1}(\gamma_1)\dots \mathrm{ch}_{k_m}(\gamma_m)\big\rangle_{\beta}^{\mathsf{DT}^\prime}$$ 
is  a polynomial in $(q\frac{d }{d q})^i F_3(-q)$ for \(0\le i\le m\) with coefficients in the ring of rational functions of \(q\).
\end{conjecture}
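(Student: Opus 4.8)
The plan is to trace every source of non-rationality in the $\mathsf{DT}^\prime$ series to the contribution of floating (0-dimensional) components of the quotient $\mathcal{O}_X/\mathcal{I}$, and to show that each such contribution is a logarithmic derivative of the degree-0 partition function. By \cite{MNOP1}, the degree-0 series is the transcendental factor
\[\langle 1\rangle_0^{\mathsf{DT}}=M(-q)^{c},\qquad c=\int_X c_3(T_X\otimes K_X),\]
where $M(q)=\prod_{n\geq 1}(1-q^n)^{-n}$ is the McMahon function, and a direct computation gives $q\frac{d}{dq}\log M(-q)=F_3(-q)$. Hence the iterated derivatives of $\log\langle 1\rangle_0^{\mathsf{DT}}$ are exactly $c\,(q\frac{d}{dq})^iF_3(-q)$. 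The crucial point is that the normalization \eqref{p99p} cancels the transcendental prefactor $M(-q)$ itself, but not its logarithmic derivatives, so that the $F_3(-q)$ survive while the bare McMahon function disappears. The goal is then to express an arbitrary correlator as a finite combination of such derivatives with rational-function and stable-pairs (hence rational) coefficients.

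First I would establish the toric case, where the capped-vertex localization of \cite{MNOP1,MNOP2,MOOP} reduces the series to a product of vertex and edge contributions. In the renormalized vertex $\mathsf{V}/M$, a descendent $\mathrm{ch}_k(1)$ acts on the universal sheaf, whose fixed-point character is $\sum_{\square}t^{\mathrm{wt}(\square)}$, by inserting the box-weighted sum $\sum_{\square}\mathrm{wt}(\square)^k$. Along the infinite legs these sums are divergent and must be regularized by the standard zeta-value prescription of \cite{MNOP1,MNOP2}; the resulting generating series over the sea of floating boxes is precisely a derivative-twisted copy of $F_3(-q)$, the factor $n^2$ reflecting the three-dimensional box geometry. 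Each identity-class insertion that lands on the floating-point sea produces one such factor, and the descendent index $k$ controls (and in particular bounds) the order $i$ of the derivative $(q\frac{d}{dq})^iF_3(-q)$ that appears.

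Next I would organize the full correlator according to the distribution of the $m$ insertions between the curve (positive-degree) locus and the point locus. On the curve locus the relevant invariants match stable pairs by the stationary $\mathsf{DT}/\PT$ correspondence proven just above, and stable-pairs descendent series are Laurent expansions of rational functions of $q$ by Conjecture \ref{ggtthh} (known for toric $X$); only descendents of the identity class $1\in H^0(X)$ can reach the floating points and produce $F_3(-q)$. The exponential (disconnected) structure of the partition function then forces polynomial, rather than merely rational, dependence on the $F_3$-derivatives: connected correlators are linear in a single $(q\frac{d}{dq})^iF_3(-q)$, and passing to disconnected correlators multiplies these. Since at most $m$ insertions can hit the sea, extracting the coefficient of the descendent monomial yields a polynomial of total degree at most $m$ in the quantities $(q\frac{d}{dq})^iF_3(-q)$ with $0\le i\le m$, with coefficients rational in $q$, which is the assertion.

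The main obstacle is passing to general, non-toric $X$, where the vertex computation is unavailable and the clean separation of curve and point contributions must be replaced by an intrinsic geometric argument. The difficulty is that $\mathrm{ch}_k(\mathbb{F})$ does \emph{not} split as a sum of a purely curve term and a purely point term, so the distribution argument requires a filtration of $I_n(X,\beta)$ by the length of the zero-dimensional part together with a descendent refinement of the $\mathsf{DT}/\PT$ wall-crossing of \cite{Br,Toda}; controlling how the universal sheaf behaves as floating points are created and destroyed is the genuinely hard step. Even in the toric case, the precise bookkeeping relating a descendent index $k$ to the derivative order $i$, and the proof that orders exceeding $m$ never occur, is delicate and would have to be carried out through the regularization analysis of the edge sums following \cite{MNOP1,MNOP2}.
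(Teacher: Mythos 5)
The statement you are working on is Conjecture \ref{conjDT}; the paper does not prove it. It is placed in the concluding remarks precisely because the authors regard it as open: the only justification given there is the observation (which you also make) that $F_3(-q)$ arises as the logarithmic derivative of the McMahon function governing the degree-0 series of \cite{MNOP1}, together with numerical experiments. So there is no proof in the paper against which to measure your argument, and your text should be read as a heuristic outline rather than a proof.

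As a proof attempt it has genuine gaps, some of which you acknowledge yourself. The central unproved step is the assertion that, in the renormalized equivariant vertex, each identity-class insertion landing on the ``sea of floating boxes'' contributes exactly one factor of a $q\frac{d}{dq}$-derivative of $F_3(-q)$ with rational-function coefficients: the DT vertex does not factor into a curve part times a point part, the zeta-regularized sums over the infinite legs mix the two, and the DT/PT wall-crossing of \cite{Br,Toda} has no known descendent refinement, so the claimed ``linearity of connected correlators in a single $(q\frac{d}{dq})^i F_3(-q)$'' is precisely the content of the conjecture restated, not a lemma you can invoke. The bounds (derivative order $i\le m$ and polynomial degree $\le m$) are likewise asserted rather than derived from the regularization, and your remark that the descendent index $k$ bounds $i$ is not what the conjecture claims. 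Finally, the conjecture is stated for arbitrary nonsingular projective 3-folds, while your entire mechanism is toric localization; descendents of $1\in H^0(X)$ cannot be moved to a relative divisor by degeneration, so there is no known route from the toric case to the general one. Your sketch is a reasonable account of why the conjecture should hold --- it matches the authors' stated motivation --- but it does not close any of the steps that make the statement a conjecture rather than a theorem.
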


\subsection{Beyond the stationary case}
\label{sec:beyond-stat-case}
The $\GW/\PT$ correspondence for the complete non-equivariant descendent
theory is expected to be significantly more complex than
the stationary case because
of the analytic properties of the $\GW$ descendent series. 
In fact, we expect the analytic complexities
of the $\GW$ descendent series to be very similar 
to those of the  $\mathsf{DT}$ descendent series. 

The Euler-Maclaurin formula provides an asymptotic expansion of $F_3(-q)$ at $u=0$: 
$$ F_3(u)\, \sim\,  2\zeta(3)/u^3-\sum_{n=0}^\infty\frac{B_{2n+2}B_{2n}}{(2n)!(2n+2)}(iu)^{2n-1}\, .$$
For  simplicity, we will use the notation $F_3(u)$ for the  part of 
the expansion without the most singular term. In order words, we define
$$ F_3(u)=-\sum_{n=0}^\infty\frac{B_{2n+2}B_{2n}}{(2n)!(2n+2)}(iu)^{2n-1}\, .$$

Let us denote by \(\mathcal{R}\) the ring of rational functions of \(q=-e^{iu}\). The following is a Gromov-Witten version of 
Conjecture~\ref{conjDT}.


\begin{conjecture}\label{conjGW} Let $X$ be a nonsingular projective 3-fold.
For $\gamma_i\in H^*(X,\mathbb{C})$,
the series 
$$ \big\langle \tau_{k_1}(\gamma_1)\dots \tau_{k_m}(\gamma_m)\big\rangle^{\GW}_{\beta}$$ 
is  a polynomial in $(\frac{d }{d u})^i F_3(u)$ for
$0\le i \le m$ with coefficients in the ring \(\mathcal{R}[u^\pm]\).
\end{conjecture}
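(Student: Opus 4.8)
The plan is to deduce Conjecture \ref{conjGW} from the analytic structure of the $\mathsf{DT}$ descendent series (Conjecture \ref{conjDT}) by means of a \emph{complete}, non-stationary descendent $\GW/\mathsf{DT}$ correspondence. The starting point is to extend the correspondence machinery of Sections \ref{einbein}--\ref{sec:uniq-cor} beyond the stationary range $\gamma\in H^{\ge 2}(X)$ to all $\gamma\in H^*(X)$, including descendents of the identity. Concretely, I would seek an invertible linear transformation $\mathbb{K}$ on descendent insertions, with matrix coefficients in $\mathcal{R}[u^{\pm}]$, such that after the change of variables $-q=e^{iu}$ one has
\[
\big\langle \tau_{k_1}(\gamma_1)\cdots\tau_{k_m}(\gamma_m)\big\rangle^{\GW}_\beta
= \big\langle \mathbb{K}\big(\mathrm{ch}_{k_1}(\gamma_1)\cdots\mathrm{ch}_{k_m}(\gamma_m)\big)\big\rangle^{\mathsf{DT}'}_\beta .
\]
The operators $\Hr^{\GW}$ and $\Hr^{\DT}$ of Section \ref{vvtt3}, together with the set-partition formulas of Section \ref{sec:non-equiv-limit}, already produce such a transformation in the stationary case; the essential new input is the contribution of the identity class, which is governed by the collapsed (degree-zero) geometry and is exactly where $F_3$ will enter.

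Granting $\mathbb{K}$, the argument proceeds as follows. First I would invoke Conjecture \ref{conjDT}: each series on the right, being a $\mathsf{DT}'$ descendent series, is a polynomial in $(q\tfrac{d}{dq})^i F_3(-q)$ for $0\le i\le m$ with coefficients in $\mathcal{R}$, the normalization by $\langle 1\rangle_0^{\mathsf{DT}}$ having already removed the most singular MacMahon factor. Next I would transport the derivative operator through the change of variables: since $-q=e^{iu}$ gives $q\tfrac{d}{dq}=-i\,\tfrac{d}{du}$, the building blocks $(q\tfrac{d}{dq})^i F_3(-q)$ become, up to the constant factors $(-i)^i\in\mathcal{R}[u^\pm]$, the quantities $(\tfrac{d}{du})^i F_3(u)$ appearing in the statement, with $F_3(u)$ as normalized in Section \ref{sec:beyond-stat-case}. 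Finally, because $\mathbb{K}$ has coefficients in $\mathcal{R}[u^\pm]$ and its set-partition structure merges insertions but never produces more than $m$ of them, applying $\mathbb{K}$ keeps the result a polynomial in the same derivatives $(\tfrac{d}{du})^i F_3(u)$, $0\le i\le m$, with coefficients still in $\mathcal{R}[u^\pm]$. This yields the claimed structure.

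The hard part is the first step: constructing the full descendent correspondence $\mathbb{K}$ for descendents of the identity. As noted after Conjecture \ref{cccjjj}, the explicit vertex-operator formula of this paper is \emph{not} valid for $\gamma=1$, and the restriction $c_3=c_1c_2$ used in the stationary non-equivariant limit fails to be harmless once degree-zero insertions are present. The $F_3$-dependence enters precisely through these collapsed contributions: the Hodge integrals $\int_{\overline{M}_g}\lambda_{g-1}^3$ assemble into the logarithmic derivative of the MacMahon function, whose $u$-expansion carries exactly the coefficients $B_{2n+2}B_{2n}$ of $F_3(u)$ recorded in Section \ref{sec:beyond-stat-case}. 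Controlling these terms --- equivalently, tracking the MacMahon factor relating $\mathsf{DT}$ and $\PT$ through the descendent correspondence --- is where the real work lies, and it is the same difficulty that leaves Conjecture \ref{conjDT} open. A natural intermediate target would be to first establish the equivalence of Conjectures \ref{conjDT} and \ref{conjGW} for toric $X$, where the capped-vertex localization of Section \ref{sec:uniq-cor} is available and the degree-zero vertex can be computed explicitly.
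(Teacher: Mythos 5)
This statement is one of the paper's \emph{conjectures}; the paper offers no proof of it, and your proposal does not supply one either. What you have written is a conditional reduction: Conjecture \ref{conjGW} would follow from (a) Conjecture \ref{conjDT}, which is itself open, and (b) the existence of a complete non-stationary $\GW/\mathsf{DT}$ descendent correspondence $\mathbb{K}$ with coefficients in $\mathcal{R}[u^{\pm}]$, which is precisely the object the paper says it cannot construct --- the vertex-operator formula is stated to fail for descendents of the identity, and the closest the paper comes is Conjecture \ref{gg559}, which is itself conjectural and only asserted mod $(c_1c_2-c_3)^2$. You identify the hard part honestly, but identifying it is not the same as closing it; as a proof the proposal establishes nothing unconditionally.

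There is also a concrete analytic error in the step you treat as routine. You claim that under $-q=e^{iu}$ the building blocks $(q\frac{d}{dq})^iF_3(-q)$ ``become, up to the constant factors $(-i)^i$,'' the quantities $(\frac{d}{du})^iF_3(u)$. This is not a change of variables at all: as the paper notes, $F_3(q)$ diverges at every point of $|q|=1$, so $F_3(-e^{iu})$ is meaningless, and the passage from the $q$-side to the $u$-side goes through an Euler--Maclaurin \emph{asymptotic expansion} with the leading $2\zeta(3)/u^3$ term discarded. Moreover the paper's homomorphism $\Theta$ carries an extra factor of $\tfrac{1}{2}$ on the generators $(q\frac{d}{dq})^mF_3(-q)$ and is explicitly flagged as ``not merely a change of variable.'' Any genuine argument must define and justify such a transcendental substitution map and verify that the descendent series actually respect it --- this is an additional open problem layered on top of (a) and (b), not a bookkeeping step. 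Your closing suggestion, to first establish the \emph{equivalence} of Conjectures \ref{conjDT} and \ref{conjGW} in the toric case via capped-vertex localization, is a reasonable research direction, but it should be presented as such rather than as part of a proof.
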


The power series $F_3(q)$ does not converge at any point of a circle $|q|=1$.
 Hence, the $q$-derivatives of 
$F_3(q)$ are linearly independent over $\mathcal{R}$. 
Otherwise $F_3(q)$ would be a solution of 
a non-trivial linear 
differential equation with rational coefficients and hence  analytic outside finite number of points.
We can therefore
define a homomorphism $$\Theta: \mathcal{R}\left[u^{\pm 1},F_3(-q),q\frac{d}{d q}F_3(-q),\dots\right]\to
\mathbb{Q}[u^{-1}][[u]].$$ 
On the subring $\mathcal{R}[u^{\pm 1}]$, the homomorphism $\Theta$ is defined by  the change of variable $e^{iu}=-q$, and, on the generators $(q\frac{d}{dq})^m F_3(-q)$, $\Theta$ is defined by:
$$\left( q\frac{d}{d q}\right)^m F_3(-q)\, \mapsto\,  \frac{1}{2}\left(-i\frac{d}{d u}\right)^m F_3(u)\, .$$
Note the factor of \(\frac{1}2\) in the last formula:
the homomorphism \(\Theta\) is {\em not} merely a change of variable.

\begin{conjecture} \label{gg559} 
We have
$$ 
\Theta\left(\big\langle\prod_i \Hr^{\mathrm{DT}}_{k_i}(\mathsf{p})\big|\mu_1,\mu_2,\mu_3\big\rangle^{\mathsf{DT}'}_{U,D}\right) = \big\langle\prod_i \Hr^{\GW}_{k_i}(\mathsf{p})|\mu_1,\mu_2,\mu_3\big\rangle^{\GW}_{U,D}\ 
\mod (c_1c_2-c_3)^2.$$
\end{conjecture}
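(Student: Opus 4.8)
The plan is to control both capped vertices order-by-order in the ideal generated by $c_1c_2-c_3$, factoring the conjectural ${\mathsf{DT}}/\GW$ correspondence through stable pairs theory. Since the factorization $c_1c_2-c_3=(s_1+s_2)(s_1+s_3)(s_2+s_3)$ vanishes on the subtorus $\mathsf{T}_0$, the expansion in powers of $c_1c_2-c_3$ is an expansion around the stationary locus, and the mod $(c_1c_2-c_3)^2$ statement amounts to matching only the zeroth and first orders. The zeroth-order term carries no analytic subtlety; the first-order term is exactly where the non-rational function $F_3$ enters, and matching it is the entire content of the refinement.

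First I would apply capped localization, as in the proof of Lemma~\ref{lem:poles}, to express both capped vertices as sums over torus-fixed loci of standard bare vertices, half-edge terms, and rubber integrals. The half-edge and rubber contributions are rational in $q$ and are matched across the ${\mathsf{DT}}$ and $\GW$ theories by the primary correspondence of \cite{MNOP1,MNOP2}; they lie in the kernel of the $F_3$-part of $\Theta$ and feed only the zeroth-order term. All non-rational behaviour is therefore concentrated in the bare vertex, and I would reduce the conjecture to a comparison of bare ${\mathsf{DT}}$ and $\GW$ vertices modulo $(c_1c_2-c_3)^2$. For the zeroth-order piece, mod $(c_1c_2-c_3)$ one combines the stationary ${\mathsf{DT}}/\PT$ correspondence of Section~\ref{sec:stat-dtpt-corr} (where ${\mathsf{DT}}=\PT$ modulo $s_1+s_2$) with the $\GW/\PT$ correspondence of the body; here $\Theta$ acts by the bare substitution $e^{iu}=-q$ and the two sides agree as rational functions.

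The first-order term is governed by the formula recalled in remark~(i) after Theorem~\ref{thm:one_leg}: a collapsed connected component of genus $g\ge 2$ contributes $(-1)^g(c_3-c_1c_2)\lambda_{g-1}^3$, which is divisible by $c_1c_2-c_3$. Thus, modulo $(c_1c_2-c_3)^2$, these components enter exactly once, and summing the associated $\lambda_{g-1}^3$ Hodge integrals over all $g\ge 2$, weighted by the ways of distributing the descendent insertions onto the collapsed cloud, assembles the Bernoulli series that is the asymptotic expansion of $F_3(u)$. On the stable-pairs/${\mathsf{DT}}$ side, after the normalization \eqref{p99p}, the matching $F_3(-q)$-data arises from descendent insertions coupling to the degree-zero McMahon cloud; by the structure asserted in Conjecture~\ref{conjDT} these are polynomials in the $(q\tfrac{d}{dq})^iF_3(-q)$ on which $\Theta$ is defined.

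The main obstacle is the final matching of these two sources of $F_3$, and in particular pinning down the factor of $\tfrac12$ built into $\Theta$. Concretely one must show that the genus $g\ge 2$ collapsed $\GW$ contributions reproduce exactly $\tfrac12(-i\tfrac{d}{du})^mF_3(u)$ for each distribution of descendents onto the cloud, rather than the full $F_3$ suggested by a naive McMahon comparison. I expect the $\tfrac12$ to emerge from the precise evaluation of $\int_{\overline{M}_g}\lambda_{g-1}^3$ against the coefficients $\tfrac{B_{2n+2}B_{2n}}{(2n)!(2n+2)}$ appearing in the asymptotic expansion of $F_3(u)$; carrying out this comparison, and verifying that attaching descendents preserves the factor while keeping all higher corrections inside $(c_1c_2-c_3)^2$, is the delicate heart of the argument and is precisely what the paper leaves open.
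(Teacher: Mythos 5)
There is a fundamental problem with treating this as a provable statement: Conjecture~\ref{gg559} is not proven in the paper at all. It is stated as a conjecture, supported only by ``a large number of numerical experiments,'' and the authors explicitly call it mysterious --- in particular the factor of $\tfrac12$ built into $\Theta$ has no known geometric explanation. Your proposal is likewise not a proof: you acknowledge yourself that the matching of the two sources of $F_3$, including the origin of the $\tfrac12$, ``is precisely what the paper leaves open.'' What you have written is a plausible strategy outline, and the delicate step you defer is the entire content of the conjecture, so nothing has been established.

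Beyond the gap you admit, two steps of the outline would not go through as stated. First, your ``zeroth-order'' claim modulo $(c_1c_2-c_3)$ is not covered by the results in the body of the paper: Theorem~\ref{thm:two_leg} treats only the $2$-leg vertex and only modulo $(s_1+s_3)(s_2+s_3)$, which is a \emph{weaker} congruence than one modulo $(s_1+s_2)(s_1+s_3)(s_2+s_3)=c_1c_2-c_3$; the full $3$-leg statement modulo $c_1c_2-c_3$ is itself open, so even the base case of your expansion is unproven. Second, the Gromov--Witten bracket appearing in Conjecture~\ref{gg559} is defined (Section~\ref{ccppvv}) with \emph{no} collapsed connected components of genus $\geq 2$, so the $(-1)^g(c_3-c_1c_2)\lambda_{g-1}^3$ contributions you invoke as the geometric source of $F_3(u)$ on the Gromov--Witten side are excluded from that bracket by definition. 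The Bernoulli series enters only through $\Theta$ applied to the $\mathsf{DT}'$ side, and identifying it there with anything on the Gromov--Witten side --- let alone with the precise normalization $\tfrac12\left(-i\tfrac{d}{du}\right)^m F_3(u)$ --- is exactly the open problem. Your heuristic for why the correction is linear in $c_1c_2-c_3$ is reasonable motivation, but it does not constitute an argument.
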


\noindent If Conjecture \ref{gg559} were true, then we could also write a conjecture
for the complete
non-equivariant $\GW/\mathsf{DT}$ descendent correspondence
via the formulas  of Section \ref{sec:non-equiv-limit}. 
The appearance of \(\frac{1}{2}\) in the definition of \(\Theta\) could be motivated by the computation
of degree \(0\) \(\GW\) and \(\mathsf{DT}\) invariants \cite{MNOP1}. Though
Conjecture \ref{gg559} is  mysterious, the claimed equality has been supported
by a large number of numerical experiments.



\subsection{Equivariant DT series} Studying the $\GW/{\mathsf{DT}}$ descendent 
correspondence in the general $\mathsf{T}$-equivariant setting is  difficult
for many reasons.{\footnote{The existence of a
$\mathsf{T}$-equivariant $\GW/{\mathsf{PT}}$ descendent
correspondence is proven in \cite{PPDC}, but closed formulas
are not known.}} 
 A major unresolved question concerns the analytic properties 
of the generating series for $\mathsf{T}$-equivariant Gromov-Witten descendent 
invariants. However, based on computer experiment, we 
propose conjectures controlling the behavior of 
the $\mathsf{T}$-equivariant ${\mathsf{DT}}$ descendent
series.

Let $X$ be a nonsingular projective toric 3-fold equipped with
an action of the 3 dimensional torus $\mathsf{T}$.
As in Section \ref{ccppvv}, let
$$H_{\mathsf{T}}(\bullet)= \mathbb{C}[s_1,s_2,s_3]\, .$$
We define the algebra $\Fr$ generated by the series{\footnote{For fun, we
had originally termed these {\em Frankenstein series}.}}
$$
F_{2k+1}(-q) = \sum_{n=0}^\infty (-q)^n \sum_{d|n} d^{2k}\,, \quad k\ge 1 \,,
$$
\footnote{\(n=0\) term is defined by renormalization.}and their iterated \(q\frac{d}{dq}\) derivatives.

\begin{conjecture} \label{DT0}
The $\mathsf{T}$-equivariant $\mathsf{DT}$ descendent
series of $X$ satisfy
$$\big\langle  
\mathrm{ch}_{k_1}(\gamma_1)\dots \mathrm{ch}_{k_l}(\gamma_l)
\big\rangle^{\mathsf{DT},\mathsf{T}}_\beta\ \in H^*_{\mathsf{T}}(\bullet) 
\otimes \Q(q) \otimes \Fr\, $$
for $\gamma_i\in H^*_{\mathsf{T}}(X,\mathbb{C})$ and $\beta\in H_2(X,\mathbb{Z})$.
\end{conjecture}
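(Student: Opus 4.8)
The plan is to establish Conjecture~\ref{DT0} by $\mathsf{T}$-equivariant localization, reducing the full series to local vertex and edge contributions, and then to pin down the transcendental $q$-dependence of each contribution inside the algebra $\Fr$. By the virtual localization formula \cite{GP} in the capped vertex formalism of \cite{MOOP,PPDC}, the series $\langle\prod_i\ch_{k_i}(\gamma_i)\rangle^{\mathsf{DT},\mathsf{T}}_\beta$ is a finite combination, over the $\mathsf{T}$-fixed loci indexed by the toric graph of $X$, of products of capped descendent vertices and edge terms, with coefficients rational in the equivariant weights $s_1,s_2,s_3$. General insertions $\ch_{k_i}(\gamma_i)$ reduce, upon restriction of $\gamma_i$ to the torus-fixed points, to descendents of the fixed-point classes, so only vertex insertions of point classes need to be controlled. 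Since $\Fr$ is by construction closed under $q\frac{d}{dq}$ and under multiplication, the ring $H^*_\mathsf{T}(\bullet)\otimes\Q(q)\otimes\Fr$ is stable under the sums and products of the localization expansion; polynomiality in the $s_i$ of the final answer is guaranteed by properness. It therefore suffices to prove the membership statement separately for each edge term and each capped descendent vertex.

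The edge terms are the easy ingredient. They are governed by the local curve theory over the toric $\proj^1$'s and, exactly as in \cite{MNOP1,MNOP2,MOOP}, are ratios of explicit finite products in $q$ and the equivariant weights; each therefore lies in $H^*_\mathsf{T}(\bullet)\otimes\Q(q)$ and contributes nothing to the $\Fr$-factor. Before attacking the vertex, I would record the vertex-level $\mathsf{DT}/\PT$ comparison, which separates the degree-zero (box-counting) contributions --- whose bare generating function is the equivariant MacMahon function --- from a remainder that matches the $\PT$ vertex. Since the $\PT$ descendent vertex is rational in $q$ for toric geometries \cite{PPDC}, all non-rational $q$-dependence is concentrated in the descendent-dressed degree-zero factors, and it is these that must be shown to lie in $\Q(q)\otimes\Fr$.

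The heart of the matter is thus the capped descendent vertex, and I would analyze it through the operator formalism of Section~\ref{einbein}. As in Propositions~\ref{prp:evPT} and \ref{BosonFermion}, the descendent insertions act on the infinite wedge via the operator $H=z\frac{d}{dz}$ (equivalently via $\mathrm{D}$), and the relevant brackets become $n$-point functions of the dressed vertex operators $e^{xsH}$. The goal is a quasimodularity-type structure theorem: after removing the MacMahon normalization, these $n$-point functions are polynomials in the even-power divisor series $F_{2k+1}(-q)=\sum_n(-q)^n\sum_{d\mid n}d^{2k}$ and their $q\frac{d}{dq}$ derivatives. In the non-equivariant limit this collapses to the single series $F_3$ of Conjecture~\ref{conjDT}; turning on the full equivariance is precisely what should promote $F_3$ to the entire tower $F_{2k+1}$, with the equivariant weights $s_i$ organizing the higher members.

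The main obstacle is this last structure theorem for the \emph{fully equivariant}, three-leg descendent vertex. In the one- and two-leg cases the relevant brackets are controlled by the $\mathrm{Hilb}_n(\CC^2)$ Fock-space model already used in Section~\ref{einbein}, where the known quasimodularity of $n$-point functions on the infinite wedge applies; the difficulty is that the three-leg vertex with $s_1+s_2+s_3\neq 0$ is genuinely three-dimensional box-counting combinatorics for which no two-dimensional free-fermion description is available. One must both compute the descendent insertions in this setting and prove that the odd-weight series $F_{2k+1}$ --- rather than the classical even-weight Eisenstein series attached to infinite-wedge characters, or any series outside $\Fr$ --- are exactly what appears. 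Confirming closure under $\Fr$ for the three-leg vertex is the essential point at which the present methods would have to be extended.
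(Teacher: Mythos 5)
The statement you are addressing is Conjecture~\ref{DT0}, which the paper does \emph{not} prove: it is proposed ``based on computer experiment'' and is explicitly placed inside ``a web of conjectures'' from \cite{OkZ} about equivariant tautological integrals and multiple $q$-zeta values. So there is no proof in the paper to compare against, and your text is not a proof either --- it is a reduction of the conjecture to another open statement. Your localization skeleton is sound as far as it goes: the $\mathsf{T}$-fixed-locus decomposition, the restriction of the $\gamma_i$ to fixed points, the rationality of the edge factors, and the closure of $H^*_{\mathsf{T}}(\bullet)\otimes\Q(q)\otimes\Fr$ under the sums, products, and $q\frac{d}{dq}$-derivatives appearing in the expansion are all correct and standard. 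But the entire content of the conjecture is then concentrated in the step you flag at the end, namely that the fully equivariant three-leg descendent vertex (with $s_1+s_2+s_3\neq 0$, where no free-fermion or $\mathrm{Hilb}_n(\CC^2)$ model is available) lies in $\Q(q)\otimes\Fr$. Asserting that a ``quasimodularity-type structure theorem'' \emph{should} hold, and that the equivariant weights \emph{should} promote $F_3$ to the tower $F_{2k+1}$, is a restatement of the conjecture, not an argument for it.

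Two further points in your reduction are also not innocuous. First, the $\mathsf{DT}/\PT$ separation you invoke --- peeling off a MacMahon-type degree-zero factor and matching the remainder with the rational $\PT$ vertex --- does not literally hold at the level of the descendent-dressed localization vertex: with $\mathrm{ch}_k$ insertions the degree-zero boxes and the curve-supported boxes interact inside each $\mathrm{ch}_{k}(\mathsf{F}_0(Q))$, so there is no clean factorization of the $\mathsf{DT}$ vertex as (degree-zero series)$\times$($\PT$ vertex). Second, even the purely degree-zero equivariant series you would be left with are themselves only conjecturally in $\Fr$ --- this is essentially the content of \cite{OkZ} --- so the ``easy'' part of your decomposition already rests on an open problem. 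In short, the proposal is a reasonable research plan whose first reductions are fine, but the heart of the statement remains unproved, consistent with its status in the paper as a conjecture supported only by numerical evidence.
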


Conjecture~\ref{DT0} fits into a web of conjectures about the analytic behavior
of generating functions of equivariant integrals of  tautological classes
over moduli spaces of sheaves \cite{OkZ}. We refer the reader to \cite{OkZ} for
more motivation, further conjectures, and future directions.

\vspace{-10pt}
\Addresses

\begin{thebibliography}{99}


\bibitem{AO}  M. Aganagic and A. Okounkov, {\em Quasimap counts and Bethe eigenfunctions}, arXiv:1704.08746.


\bibitem{BehFan}
K.~Behrend and B.~Fantechi,
\newblock {\em The intrinsic normal cone,} 
{Invent. Math.} {\bf 128} (1997), 45--88.



\bibitem{Br}
T.~Bridgeland,
\newblock {\em Hall algebras and curve-counting invariants}, JAMS \textbf{24} (2011), 969--998.
 
  
 \bibitem{BP} J.~Bryan and R.~Pandharipande,  {\em Local Gromov-Witten theory of
 curves}, JAMS {\bf{21}} (2008), 101--136.


\bibitem{Lamb} R. Corless, G. Gonnet, D.  Hare, D.  Jeffrey, and D.  Knuth, {\em
On the Lambert $W$ function},  Adv. Comput. Math. \textbf{5} (1996), no. 4, 329--359.



\bibitem{DT} S.K. Donaldson and R. P. Thomas,  
{\em Gauge theory in higher dimensions},
The geometric universe (Oxford, 1996), 31--47, Oxford Univ. Press,
Oxford, 1998.


\bibitem{FPn} 
W. Fulton and R. Pandharipande,
{\em Notes on stable maps and quantum cohomology}, in
Proceedings of Algebraic Geometry -- Santa Cruz (1995), 
Proc. Sympos. Pure Math. {\bf 62}, 45--96.


\bibitem{Getzler} E. Getzler, {\em The equivariant Toda lattice}, Publ. Res. Inst. Math. Sci. \textbf{40} (2004), no. 2, 507--536.

\bibitem{GetPan} E. Getzler and R. Pandharipande, {\em Virasoro constraints
    and Chern classes of the Hodge bundle}, Nuclear Phys {\bf B530}
  (1998), 701--714.

  
\bibitem{Giv} A. Givental, {\em Gromov-Witten invariants and quantization of quadratic Hamiltonians},
Mosc. Math. J. {\bf 1}, (2001), no. 4, 551--568, 

  
\bibitem{GP} T. Graber and R. Pandharipande,  {\em Localization of 
virtual classes},
Invent. Math.  {\bf 135}  (1999),  no. 2, 487--518.



\bibitem{Kas} Ch. Kassel, {\em Quantum groups,} Graduate Texts in Mathematics, \textbf{155}, Springer-Verlag, New York, 1995. 




\bibitem{LiTian}
J.~Li and G.~Tian,
\newblock {\em 
Virtual moduli cycles and {G}romov-{W}itten invariants of algebraic
  varieties,} {JAMS}  {\bf 11}, 119--174, 1998.


\bibitem{WangAndAll} Wei-ping Li and Zhenbo Qin,{\em Weiqiang Wang,
Vertex algebras and the cohomology ring structure of Hilbert schemes of points on surfaces,}  Math. Ann. {\bf 324}, (2002), no. 1, 105--133.



\bibitem{MGW} D. Maulik, {\em GW theory of $\An$ resolutions,}  Geom. Topol. {\bf 13}, (2009), no. 3, 1729--1773.

\bibitem {MNOP1} D. Maulik, N. Nekrasov, A. Okounkov, R. Pandharipande,
{\em Gromov-Witten theory and Donaldson-Thomas theory. I}, Compos. Math.
{\bf 142},  (2006),  no. 5, 1263--1285.

\bibitem{MNOP2} D. Maulik, N. Nekrasov, A. Okounkov, R. Pandharipande,
{\em Gromov-Witten theory and Donaldson-Thomas theory. II,} Compos.
Math.  {\bf 142},  (2006),  no. 5, 1286--1304.


\bibitem{MO} D. Maulik and A. Oblomkov, {\em DT theory of \(A_n\times \proj^1\),}
  Compos. Math. {\bf 145}, (2009), no. 5, 1249--1276. 


\bibitem{MOOP} D. Maulik, A. Oblomkov, A. Okounkov, R.
Pandharipande, {\em GW/DT correspondence for toric varieties,} Invent. Math. {\bf 186}, (2011), no. 2, 435--479.

\bibitem{MPT} D. Maulik, R. Pandharipande, and R. Thomas,
  {\em Curves on K3 surfaces and modular forms,} With an appendix by A. Pixton. J. Topol. {\bf 3}, (2010), no. 4, 937--996. 


\bibitem{OOP} M. Moreira, A. Oblomkov, A. Okounkov, and R. Pandharipande, {\em  Virasoro
  constraints for stable pairs on toric 3-folds}, in preparation.
  
  
\bibitem{Nak} H. Nakajima, {\em Lectures on Hilbert scheme of points on surfaces,} University Lecture Series, 18. American Mathematical Society, Providence, RI, 1999. 





\bibitem{OkTalk} A. Okounkov, {\em Descendent correspondence}, Talk at Compositio Prize Festivity, June 18, 2010, \url{https://burttotaro.wordpress.com/2010/06/18/2nd-july-compositio-prize-festivity/}.

  
\bibitem{OkZ} A. Okounkov, {\em 
Hilbert schemes and multiple q-zeta values,}
Funct. Anal. Appl. {\bf 48}, (2014), no. 2, 138--144 

\bibitem{OkOl} A. Okounkov and G. Olshanski, {\em Shifted Schur functions,}  St. Petersburg Math. J. {\bf 9},
  (1998), no. 2, 239--300.
  

  
  
\bibitem{OPH/GW} A. Okounkov and R. Pandharipande, {\em GW theory, Hurwitz theory and completed cycles,} Ann. of Math. (2), {\bf 163}, (2006), no. 2, 517--560. 

\bibitem{OPP1} A. Okounkov and R. Pandharipande,  {\em The equivariant Gromov-Witten theory of $\proj^1$,}  
Ann. of Math. (2)  {\bf 163},  (2006),  no. 2, 561--605.

\bibitem{OPVir} A. Okounkov and R. Pandharipande, {\em Virasoro constraints for target curves,}  
Invent. Math.  {\bf 163},  (2006),  no. 1, 47--108.






\bibitem {P} R. Pandharipande, {\em Descendents for
stable pairs on 3-folds}, Modern Geometry: A celebration of the work
of Simon Donaldson, Proc. Sympos. Pure Math. {\bf 99} (2018), 251--288.
arXiv:1703.01747.


\bibitem{part1}
R.~Pandharipande and A.~Pixton,
\newblock {\em Descendents on local curves: Rationality},  Comp. Math. {\bf 149} (2013), 81--124. 


\bibitem{PPstat}
R.~Pandharipande and A.~Pixton,
\newblock {\em Descendents on local curves: Stationary theory} 
in {\em Geometry and arithmetic}, 283--307, EMS Ser. Congr. Rep., Eur. Math. Soc., Z\"urich, 2012.


\bibitem{PP2}
R.~Pandharipande and A.~Pixton,
\newblock {\em Descendent theory for stable pairs on
toric 3-folds},  Jour. Math. Soc. Japan. {\bf 65} (2013),
1337--1372.






\bibitem{PPDC}
R.~Pandharipande and A.~Pixton,
\newblock {\em Gromov-Witten/Pairs descendent correspondence
for toric 3-folds}, Geom. Topol. {\bf 18} (2014), 2747--2821.

\bibitem{PPQ}
R.~Pandharipande and A.~Pixton,
\newblock {\em Gromov-Witten/Pairs correspondence
for the quintic}, JAMS {\bf 30} (2017), 389--449.

\bibitem{pt}
R.~Pandharipande and R.~P. Thomas,
\newblock {\em Curve counting via stable pairs in the derived
category}, Invent Math. {\bf 178} (2009), 407--447.


\bibitem{pt2}
R.~Pandharipande and R.~P. Thomas,
\newblock {\em The 3-fold vertex via stable pairs}, Geom. Topol.
{\bf 13} (2009), 1835--1876.




\bibitem{rp13} R.~Pandharipande and R.~P. Thomas, {\em 13/2 ways of counting curves}, in {Moduli spaces}, 
282--333, LMS Lecture Note Ser.
{\bf 411}, Cambridge Univ. Press, Cambridge, 2014. 



\bibitem{PixSenior} A.~Pixton, {\em Gromov-Witten theory of an elliptic
curve and quasi-modular forms},
Senior thesis, Princeton University 2009.


\bibitem{Sm} A. Smirnov, {\em 
Rationality of capped descendent vertex in K-theory}, arXiv:1612.01048.


\bibitem{T} R.P. Thomas,  {\em A holomorphic Casson invariant
for Calabi-Yau 3-folds, and bundles on $K3$ fibrations}, J.
Differential Geom.  \textbf{54}  (2000), 367--438.

\bibitem{Toda}
Y.~Toda,
\newblock {\em Curve counting theories via stable objects I: DT/PT correspondence},
JAMS \textbf{23}  (2010), 1119--1157.


\end{thebibliography}
\end{document}